\documentclass[12pt,a4paper]{article}

\usepackage[utf8]{inputenc}
\usepackage{amssymb, amsthm, amsmath}
\usepackage[english]{babel}
\usepackage[T1]{fontenc}
\usepackage{graphicx}
\usepackage{mathtools}
\usepackage[font=small,labelfont={normal, bf}]{caption}
\usepackage{subcaption}
\usepackage{comment}
\usepackage{enumitem}
\usepackage[cm]{fullpage}
\usepackage{xcolor}

\usepackage{autonum}

\makeatletter
\newcommand{\@giventhatstar}[2]{#1\;\middle|\;#2)}
\newcommand{\@giventhatnostar}[3][]{#1#2\;#1|\;#3#1}
\newcommand{\giventhat}{\@ifstar\@giventhatstar\@giventhatnostar}
\makeatother

\newtheorem{theorem}{Theorem}
\newtheorem{proposition}{Proposition}
\theoremstyle{definition}

\newtheorem{corollary}{Corollary}
\newtheorem{example}{Example}
\newtheorem{remark}{Remark}
\newtheorem{lemma}{Lemma}

\title{Structure-Preserving Numerical Schemes for a Time-Fractional Porous Medium Equation with Nonlocal Pressure}
\author{
\L{}ukasz P\l{}ociniczak$^*$, \and Hubert Woszczek\thanks{Faculty of Pure and Applied Mathematics, Wroclaw University of Science and Technology, Wyb. Wyspia\'nskiego 27, 50-370 Wroc{\l}aw, Poland}$^{*,}$\thanks{\underline{Corresponding author:} \texttt{hubert.woszczek@pwr.edu.pl}}
}
\date{}

\begin{document}
\maketitle

\begin{abstract}

In this work, we study numerical approximations of a time-fractional porous medium equation with nonlocal pressure of the form
\begin{equation}
	\partial_t^\alpha u-\nabla\cdot\left(u^{m-1}\nabla(-\Delta)^{-\sigma}u\right)=0,
\end{equation}
where $0<\alpha<1$, $0<\sigma<1$ and $m>1$. The Caputo derivative is discretized by the L1 formula, while the nonlinear nonlocal flux is approximated by a conservative upwind finite-difference scheme. 

We establish several structural properties of the implicit scheme in arbitrary space dimension, including a discrete maximum principle, non-negativity, mass conservation, $\ell^\infty$ and $\ell^2$ stability, existence of non-negative discrete solutions, and a discrete energy-dissipation estimate. Our assumptions include natural symmetry, a sign condition on the sum of the directional coefficients, and decay on the discrete weights. We also derive a consistency estimate for the complete nonlinear spatial operator.

For rigorous convergence analysis, we restrict ourselves to one space dimension and introduce the cumulative variable associated with the density. This transforms the discrete problem into a monotone approximation of an equation involving the fractional Laplacian. Interestingly, for semigroup-based pressure-gradient weights, the resulting discrete operator is identified with the fractional discrete Laplacian. Next, using comparison and contraction properties of the integrated scheme, for $m\geq2$ and under the stated spatial regularity and weighted temporal consistency assumptions, we prove uniform convergence of the cumulative numerical solution and convergence of the corresponding density against compactly supported smooth test functions. The temporal estimates allow a weak singularity at the initial time. We also analyze an explicit variant of the scheme and derive a sufficient CFL condition covering the full range $0<\sigma<1$. Numerical experiments illustrate the stability and convergence properties of the proposed methods.
\end{abstract}

\textbf{Keywords: }porous medium equation, Caputo fractional derivative, nonlocal pressure, L1 formula, upwind finite-difference scheme\\

\textbf{MSC: }65M12, 65M06, 35R11, 35K65

\section{Introduction}
Nonlocal porous medium equations combine a density-dependent diffusivity with interactions acting over long spatial scales. In the fractional-pressure model, these interactions are encoded with the pressure given by the fractional gradient $\nabla(-\Delta)^{-\sigma}u$. In addition, replacing the classical time derivative by a Caputo derivative introduces memory into the evolution. The resulting equation combines nonlinear degeneracy with nonlocality in both space and time, making the preservation of positivity, mass, and stability particularly relevant to its numerical approximation. There are several difficulties in constructing such an approximation. The diffusivity vanishes where the density is zero, while the pressure gradient depends on the density in the whole space. Moreover, the time derivative depends on the previous history of the solution. Hence, the discretizations of the flux and of the memory term have to be considered together when proving stability and convergence. 

In this paper, we introduce and analyze numerical approximations of
\begin{equation}\label{fpe}
\begin{aligned}
\partial_t^\alpha u-\nabla\cdot\left(u^{m-1}\nabla(-\Delta)^{-\sigma}u\right)&=0
\quad\text{in }\mathbb{R}^d\times(0,T),\\
u(\cdot,0)&=u_0\quad\text{in }\mathbb{R}^d,
\end{aligned}
\end{equation}
where $m>1$, $0<\sigma<1$, and $0<\alpha<1$. We take non-negative initial data $u_0\in L^1(\mathbb{R}^d)\cap H^r(\mathbb{R}^d)$, with $r\geq d/2+3$, and take the regularity assumption $u\in C([0,T];L^1(\mathbb{R}^d)\cap H^r(\mathbb{R}^d))$. Existence of the solutions of such regularity was shown for $\alpha=1$ and $m=1$ by Choi and Yeong \cite{choi2021classical}. Also, when additional spatial and temporal regularity is required, we impose it where needed for consistency and convergence. In particular, the convergence estimates below are conditional on the stated consistency bounds. We do not establish existence or uniqueness for the continuous problem in this generality.

For sufficiently regular functions, the Caputo derivative can be written as
\begin{equation}\label{eq:CaputoDerivative}
\partial_t^\alpha y(t) =\frac{y(t)-y(0)}{\Gamma(1-\alpha)t^\alpha} +\frac{\alpha}{\Gamma(1-\alpha)} \int_0^t\frac{y(t)-y(s)}{(t-s)^{1+\alpha}}\,ds.
\end{equation}
On the other hand, the pressure is formally $p=(-\Delta)^{-\sigma}u$ and since only its gradient enters \eqref{fpe}, we define the spatial interaction directly by the Fourier multiplier
\begin{equation}
\widehat{\nabla(-\Delta)^{-\sigma}v}(\xi)=i\xi|\xi|^{-2\sigma}\widehat v(\xi).
\end{equation}
For smooth, rapidly decaying functions, the corresponding singular-integral representation is
\begin{equation}
\begin{aligned}
\nabla(-\Delta)^{-\sigma}v(x)
&=c_{d,\sigma}\operatorname{P.V.}\int_{\mathbb{R}^d}
\frac{v(x+y)-v(x)}{|y|^{d+1-2\sigma}}\frac{y}{|y|}\,dy\\
&=c_{d,\sigma}\operatorname{P.V.}\int_{\mathbb{R}^d}
\frac{v(x)-v(z)}{|x-z|^{d+1-2\sigma}}\frac{x-z}{|x-z|}\,dz,
\end{aligned}
\end{equation}
where the normalization constants $c_{d,\sigma}>0$ are known and symmetric limits are used when needed. Defining the gradient directly also avoids the low-frequency normalization of the pressure in one dimension when $\sigma\geq1/2$. This operator was first introduced by Horv\'ath \cite{Horvath1959}.We write
\begin{equation}
\mathcal A u=\nabla\cdot\left(u^{m-1}\nabla(-\Delta)^{-\sigma}u\right).
\end{equation}
For the classical time derivative and $m=2$, Caffarelli and V\'azquez~\cite{CaffarelliVazquez2011} established the existence of bounded weak solutions with finite propagation speed for compactly supported data. Stan, del Teso, and V\'azquez~\cite{Stan2016FiniteInfinite} studied the more general mobility $u^{m-1}$ and the dependence of propagation on $m$. The derivation of this equation for the classical time derivative from physical principles was provided by the first author \cite{Plociniczak2019}. In one dimension, the use of integrated cumulative variables $v(x,t)=\int_{-\infty}^x u(y,t)\,dy$ transforms the density equation to a nondivergence form equation that involves a fractional Laplacian. This formulation has the advantage of providing access to comparison arguments. In addition, a related viscosity theory is developed in~\cite{chasseigne2017nonlocal}. In the context of the mathematical analysis of dislocation density, this equation with $m=2$ and $\sigma=1/2$ was analyzed by Biler, Karch, and Monneau \cite{Biler2010}. For our time-fractional equation, the same integration in space gives, under sufficient regularity and vanishing flux at $-\infty$,
\begin{equation}\label{eq:introduction_integrated}
\partial_t^\alpha v+|\partial_xv|^{m-1}(-\Delta)^sv=0,
\quad s=1-\sigma.
\end{equation}
Notice that $v$ is bounded and non-decreasing when $u$ is non-negative and integrable. This gives a useful class of functions on which to study the comparison and stability of the numerical method. It also allows us to recover the density approximation by discrete differentiation. The relation between these two formulations is used throughout our one-dimensional analysis.

The combination of fractional pressure and a fractional time derivative was studied by Allen, Caffarelli, and Vasseur~\cite{Allenetal2017}. For $m=2$ and $0<\sigma<1/2$, they proved the existence and H\"older regularity of weak solutions under suitable decay assumptions on the initial data and forcing. Their results provide an analytical foundation for the time-fractional model, but do not supply a general uniqueness theorem or the strong regularity assumed in our error analysis. Viscosity methods for parabolic problems with a Caputo derivative have also been developed by Topp and Yangari~\cite{ToppYangari2017}. The closest numerical predecessor is the work of del Teso and Jakobsen~\cite{delTesoJakobsen2026Convergent} for the classical-time equation. They constructed a monotone upwind approximation of the one-dimensional cumulative problem and recovered a conservative density scheme by numerical differentiation. Their convergence theory includes nonsmooth and measure-valued initial data. The cumulative formulation, the choice of the upwind derivative, and the recovery of the density are therefore already available in the classical-time setting. In the present work, we study how this construction can be combined with the L1 history term, both for implicit and explicit time stepping. This requires comparison estimates that retain the contribution of all previous time levels.

A different approach is taken by Carrillo, Fronzoni, and S\"uli~\cite{CarrilloFronzoniSuli2025}, who analyze a finite element method for a parabolically regularized model on bounded domains in dimensions two and three, with pressure defined through a spectral fractional Neumann Laplacian. For the distinct class of equations $\partial_tu-\mathfrak L[\varphi(u)]=f$, robust numerical methods are developed by del Teso, Endal, and Jakobsen~\cite{delTeso2018}. In that class, a linear nonlocal operator acts on a nonlinear function of the density. In \eqref{fpe}, the nonlinearity instead enters through the product of the mobility and the pressure gradient. Thus, although positivity and conservation are common numerical objectives, spatial operators have different structures. The analysis in the following uses the conservative flux in the density variable and the comparison property in the cumulative variable. Our time discretization uses the L1 formula, whose stability and accuracy for time-fractional diffusion were studied, for example, by Lin and Xu~\cite{LinXu2007}.

In addition, a relevant issue in its analysis is the regularity of the solution near the initial time. The usual $O(\tau^{2-\alpha})$ estimate requires time regularity, which does not need to hold even for smooth initial data. Jin, Lazarov, and Zhou~\cite{JinLazarovZhou2016L1} analyzed the L1 method for linear subdiffusion with smooth and nonsmooth initial data. Stynes, O'Riordan, and Gracia~\cite{StynesORiordanGracia2017} studied the effect of the initial singularity on finite difference approximations on uniform and graded meshes. Kopteva~\cite{Kopteva2019L1} provided a framework for L1 error estimates in the maximum and $L^2$ norms. These works explain why requiring the consistency error to tend to zero uniformly over all time levels can be too restrictive. They also provide the background for the weighted temporal estimate used here.

More recently, del Teso and P\l{}ociniczak~\cite{delTeso2026Numerical} combined L1 time stepping with monotone spatial discretizations for diffusion equations nonlocal in space and time, obtaining convergence and regularity results in a viscosity framework. Their spatial operator has a prescribed space-time dependent diffusivity. In \eqref{eq:introduction_integrated}, the coefficient $|\partial_xv|^{m-1}$ depends on the unknown solution and can vanish. For this reason, the monotonicity of the complete nonlinear spatial discretization has to be checked before applying a temporal comparison argument.

The spatial construction used here is based on the fractional powers of the discrete Laplacian studied by Ciaurri et al.~\cite{ciaurri2015fractional,ciaurri2018nonlocal}. These works also provide the kernel formulas and consistency estimates used in our one-dimensional analysis. For the semigroup pressure-gradient weights, taking the negative discrete divergence gives the fractional power $(-\Delta_h)^{1-\sigma}$. The resulting operator has a difference representation with non-negative weights. In several dimensions, the relevant sign condition concerns the sum over the spatial directions. The individual directional coefficients need not be non-negative. We verify the summed sign condition and the required cancellation and summability properties in the appendix.

The present work combines a conservative upwind flux with implicit and explicit L1 time discretizations. We first study the implicit density scheme in an arbitrary space dimension. Under the stated assumptions on the weights and summability of the fluxes, we establish structural estimates for the implicit scheme, including non-negativity, mass conservation, maximum-norm and energy bounds, and existence of discrete solutions. These results use the upwind form of the flux, the summed sign condition on the spatial weights, and the positivity of the L1 history coefficients. The implicit estimates do not require a CFL restriction.

In one dimension, the cumulative formulation yields comparison and contraction estimates. For $m\geq2$, these estimates give a uniform convergence of the cumulative approximation and a convergence of the density against compactly supported smooth test functions, provided that the assumed spatial and weighted temporal consistency bounds hold. More precisely, the comparison estimate for the nonlinear resolvent gives a scalar L1 inequality for the maximum-norm error. A weighted discrete comparison estimate then controls the accumulation of the consistency errors. This yields the convergence result in Theorem~\ref{thm:integrated_convergence}.

The temporal analysis allows for the bounds $\|\partial_t v(t)\|_\infty\leq C_vt^{\alpha-1}$ and $\|\partial_{tt}v(t)\|_\infty\leq C_vt^{\alpha-2}$ for $t>0$. Under these bounds and the stated spatial assumptions, the implicit error estimate takes the form of
\begin{equation}\label{eq:introduction_error}
\max_{1\leq n\leq N}\|V^n-v^n\|_\infty
\leq\|V^0-v^0\|_\infty+C\left(\tau^\alpha+h+\eta_{\mathcal A}(h)\right).
\end{equation}
Here, $h$ and $\tau$ are the spatial and temporal mesh sizes, $t_n=n\tau$, $V^n$ is the discrete cumulative solution, and $v^n$ denotes the sampled exact solution. The quantity $\eta_{\mathcal A}(h)$ is the consistency error of the fractional discrete Laplacian, and $C$ is independent of $h$ and $\tau$. The estimate is uniform over all time levels, including the first one. It follows from a time-weighted residual bound and does not require the unweighted first-step residual to tend to zero. The stated regularity of the cumulative solution remains an assumption on the continuous problem.

We also derive a sufficient CFL condition for the explicit scheme throughout $0<\sigma<1$. The restriction involves $\tau^\alpha$ and reflects the spatial scaling on either side of $\sigma=1/2$, with a logarithmic correction at this value. Under this condition, comparison and contraction give an analogous error estimate, with an additional temporal modulus for the continuous spatial operator. This modulus accounts for evaluating the numerical flux at the previous time level; its required properties are stated in Theorem~\ref{thm:explicit_convergence}. Thus, the analysis extends the cumulative upwind approach to Caputo time stepping while retaining explicit assumptions on the regularity of the solution being approximated.

The paper is organized as follows. Section~2 recalls a continuous maximum principle. Section~3 introduces the schemes and develops their structural properties, consistency, and one-dimensional convergence analysis. Section~4 presents numerical experiments. Appendix~A describes the discretizations of the pressure-gradient and the connection with the discrete fractional Laplacian.

\section{Preliminaries}

\begin{theorem}[Continuous maximum principle]
\label{thm:continuous_maximum_principle}
Let $u(x,t)$ be a sufficiently regular, bounded, and non-negative function such that $\mathcal{A}u$ is well-defined. If, for a fixed $t>0$, $u(\cdot,t)$ has a global maximum at $x_0$, that is,
\begin{equation}
	u(x_0,t)\geq u(x,t), \quad x\in\mathbb{R}^d,
\end{equation}
then
\begin{equation}
	\mathcal{A}u(x_0,t)\leq0.
\end{equation}
\end{theorem}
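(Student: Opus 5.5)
Expand the divergence by the product rule at the maximum point. With $p=(-\Delta)^{-\sigma}u$,
\begin{equation}
\mathcal A u = (m-1)u^{m-2}\nabla u\cdot\nabla p + u^{m-1}\Delta p .
\end{equation}
At an interior global maximum $x_0$ of the smooth function $u(\cdot,t)$ we have $\nabla u(x_0,t)=0$. So the first term vanishes, provided $u^{m-2}\nabla u=\frac{1}{m-1}\nabla(u^{m-1})$ is interpreted correctly when $1<m<2$ and $u$ vanishes. At $x_0$ this is harmless: if $u(x_0,t)=0$, then $u\equiv0$ and $\mathcal Au\equiv0$. If $u(x_0,t)>0$, then $u^{m-2}$ is smooth near $x_0$. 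Hence
\begin{equation}
\mathcal Au(x_0,t)=u(x_0,t)^{m-1}\,\Delta p(x_0)=-u(x_0,t)^{m-1}(-\Delta)^{1-\sigma}u(x_0,t),
\end{equation}
since $\nabla\cdot\nabla(-\Delta)^{-\sigma}$ has Fourier symbol $-|\xi|^{2}|\xi|^{-2\sigma}=-|\xi|^{2(1-\sigma)}$. This identity should be justified through the Fourier definition of the pressure gradient given above, so that no normalization of $p$ itself is needed; only $\nabla p$ enters.

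It then remains to show $(-\Delta)^{s}u(x_0)\ge0$ with $s=1-\sigma\in(0,1)$. For this I use the singular integral representation
\begin{equation}
(-\Delta)^s u(x_0)=C_{d,s}\,\mathrm{P.V.}\int_{\mathbb R^d}\frac{u(x_0)-u(x_0+y)}{|y|^{d+2s}}\,dy .
\end{equation}
Because $x_0$ is a global maximum, the integrand is non-negative, and $C_{d,s}>0$. Together with $u(x_0,t)^{m-1}\ge0$ this gives $\mathcal Au(x_0,t)\le0$.

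The main technical point is the justification. I need to check that the integral converges near $y=0$. Writing it in symmetrized form, $\frac12\int(2u(x_0)-u(x_0+y)-u(x_0-y))|y|^{-d-2s}dy$, makes it absolutely convergent for $C^2$ bounded $u$, which is implied by the regularity assumption $H^r$ with $r\ge d/2+3$. I also need to check that this pointwise formula agrees with the Fourier multiplier definition. Both points are standard for $u\in C^2_b\cap L^1$. An alternative route avoids splitting $\mathcal A$: use the kernel formula for $\nabla(-\Delta)^{-\sigma}$ and take the divergence under the integral. This is messier, so I would stay with the product-rule approach.
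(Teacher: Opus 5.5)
Your proposal is correct and follows essentially the same route as the paper. Both arguments use the product-rule expansion and the vanishing of $\nabla u(x_0,t)$, with the case $u(x_0,t)=0$ treated separately. Both then use the identity $\Delta(-\Delta)^{-\sigma}=-(-\Delta)^{1-\sigma}$ and the sign of the singular-integral representation at a global maximum. Your added remarks on the symmetrized integral and the Fourier-symbol justification are details the paper leaves implicit.
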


\begin{proof}
Since $u(\cdot,t)$ has a global maximum at $x_0$, we have
\begin{equation}
	\nabla u(x_0,t)=0.
\end{equation}
Expanding $\mathcal{A}u$, we obtain
\begin{equation}
	\mathcal{A}u=\nabla\left(u^{m-1}\right)\cdot\nabla(-\Delta)^{-\sigma}u+u^{m-1}\Delta(-\Delta)^{-\sigma}u.
\end{equation}
If $u(x_0,t)=0$, then the non-negativity of $u$ and the maximality of $u(x_0,t)$ imply that $u(\cdot,t)\equiv0$, and the conclusion is immediate. Hence, assume that $u(x_0,t)>0$. Then
\begin{equation}
	\nabla\left(u^{m-1}\right)(x_0,t)=(m-1)u(x_0,t)^{m-2}\nabla u(x_0,t)=0.
\end{equation}
Therefore, the first term vanishes at $x_0$.

For the second term from the identity
\begin{equation}
	\Delta(-\Delta)^{-\sigma}u=-(-\Delta)^{1-\sigma}u,
\end{equation}
and by the singular-integral representation of the fractional Laplacian, we have
\begin{equation}
	\Delta(-\Delta)^{-\sigma}u(x_0,t) =C_{d,1-\sigma}\operatorname{P.V.}\int_{\mathbb{R}^d} \frac{u(y,t)-u(x_0,t)}{|x_0-y|^{d+2(1-\sigma)}}\,dy.
\end{equation}
Since $x_0$ is a global maximum, it follows that
\begin{equation}
	u(y,t)-u(x_0,t)\leq0, \quad y\in\mathbb{R}^d,
\end{equation}
and hence
\begin{equation}
	\Delta(-\Delta)^{-\sigma}u(x_0,t)\leq0.
\end{equation}
Since $u(x_0,t)^{m-1}\geq0$, we conclude that
\begin{equation}
	\mathcal{A}u(x_0,t)\leq0,
\end{equation}
which proves the claim.
\end{proof}

\begin{lemma}[$L^\infty$ stability]
\label{lem:continuous_linf}
Let $u_0\in L^\infty(\mathbb{R}^d)$ be non-negative, and assume that $u(x,t)$ is a sufficiently regular, bounded and non-negative solution to \eqref{fpe}. Then
\begin{equation}
	\sup_{t\in[0,T]}\|u(\cdot,t)\|_\infty\leq\|u_0\|_\infty.
\end{equation}
\end{lemma}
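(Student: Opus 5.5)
We will prove the bound by combining Theorem~\ref{thm:continuous_maximum_principle} with a maximum principle for the Caputo derivative at an interior-in-time maximum, argued by contradiction on the function $M(t)=\sup_x u(x,t)$. Since the solution is assumed sufficiently regular and bounded, we take $M$ continuous on $[0,T]$ and assume the supremum in $x$ is attained; this holds, for instance, when $u(\cdot,t)$ decays at infinity, as in the $H^r$ setting. If the supremum were not attained, we would localize by a small perturbation, but we treat attainment as part of the regularity hypothesis.

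Suppose $\sup_{t}M(t)>\|u_0\|_\infty$. We use a perturbation to obtain strict inequalities. Fix $\varepsilon>0$ small and set $w(x,t)=u(x,t)-\varepsilon t^\alpha/\Gamma(1+\alpha)$. Then $\partial_t^\alpha w=\partial_t^\alpha u-\varepsilon=\mathcal A u-\varepsilon$. For $\varepsilon$ small, $\sup_{x,t} w>\|u_0\|_\infty$ still holds. Choose $t^*\in(0,T]$ as the first time at which $\sup_x w(\cdot,t)$ reaches $\max_{[0,T]}\sup_x w$, and let $x^*$ be a spatial maximizer of $w(\cdot,t^*)$. The function $w(\cdot,t^*)$ differs from $u(\cdot,t^*)$ only by a constant, so $x^*$ is also a global maximizer of $u(\cdot,t^*)$. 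Theorem~\ref{thm:continuous_maximum_principle} then gives $\mathcal A u(x^*,t^*)\le 0$, hence $\partial_t^\alpha w(x^*,t^*)\le-\varepsilon<0$.

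Next we use the representation \eqref{eq:CaputoDerivative} for $y(t)=w(x^*,t)$. By the choice of $t^*$, we have $y(t^*)\ge w(x^*,s)$ for all $s\le t^*$, and $y(t^*)>\|u_0\|_\infty\ge u_0(x^*)=y(0)$. Both terms in \eqref{eq:CaputoDerivative} are therefore nonnegative, and the first is strictly positive. This gives $\partial_t^\alpha w(x^*,t^*)>0$, which contradicts the previous paragraph. Hence $\sup_{x,t}w\le\|u_0\|_\infty$ for every small $\varepsilon$, and letting $\varepsilon\to0$ yields the claim.

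The main obstacle is making the Caputo-maximum step rigorous. The formula \eqref{eq:CaputoDerivative} must hold for $t\mapsto w(x^*,t)$, which requires it to be, say, $C^1$ on $(0,T]$ and continuous at $0$. The integral must also converge near $s=t^*$. We will rely on the "sufficiently regular" hypothesis for this, and if necessary justify it through a Hölder-type modulus of order greater than $\alpha$ at $t^*$. A secondary point is the attainment of the spatial supremum, which is handled as noted above.
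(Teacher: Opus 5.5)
Your argument is correct and is essentially the paper's proof. You assume the supremum exceeds $\|u_0\|_\infty$, pick $(x_*,t_*)$ with $u(x_*,t_*)\geq u(x,s)$ for all $x$ and $s\le t_*$, use Theorem~\ref{thm:continuous_maximum_principle} to get $\mathcal A u(x_*,t_*)\le 0$, and contradict this with the strict positivity from the first term of \eqref{eq:CaputoDerivative}. The perturbation $-\varepsilon t^\alpha/\Gamma(1+\alpha)$ is harmless but unnecessary, since strictness already comes from $y(t^*)>y(0)$, as you note yourself; the paper therefore argues directly with $u$.
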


\begin{proof}

Suppose to the contrary that
\begin{equation}
	\sup_{t\in[0,T]}\|u(\cdot,t)\|_\infty>\|u_0\|_\infty.
\end{equation}
Then, there exist $t_*\in(0,T]$ and $x_*\in\mathbb{R}^d$ such that
\begin{equation}
	u(x_*,t_*)=\max_{0\leq s\leq t_*}\sup_{x\in\mathbb{R}^d}u(x,s)>\|u_0\|_\infty.
\end{equation}
In particular,
\begin{equation}
	u(x_*,t_*)\geq u(x_*,s), \quad 0\leq s\leq t_*,
\end{equation}
and
\begin{equation}
	u(x_*,t_*)-u(x_*,0)>0.
\end{equation}
Applying \eqref{eq:CaputoDerivative} to the function $s\mapsto u(x_*,s)$ yields
\begin{equation}
	\partial_t^\alpha u(x_*,t_*)>0.
\end{equation}
On the other hand, $x_*$ is a global maximum of $u(\cdot,t_*)$. By Theorem~\ref{thm:continuous_maximum_principle},
\begin{equation}
	\mathcal{A}u(x_*,t_*)\leq0.
\end{equation}
But since \eqref{fpe} can be written as
\begin{equation}
	\partial_t^\alpha u=\mathcal{A}u,
\end{equation}
we obtain
\begin{equation}
	\partial_t^\alpha u(x_*,t_*)\leq0,
\end{equation}
which is a contradiction. Therefore,
\begin{equation}
	\|u(\cdot,t)\|_\infty\leq\|u_0\|_\infty, \quad t\in[0,T],
\end{equation}
and the proof is complete.

\end{proof}

\section{Numerical scheme}
We introduce the space and time grids
\begin{equation}
	x_j = jh, \quad j\in\mathbb{Z}^d,
	\quad
	t_n=n\tau, \quad n=0,\ldots,N.
\end{equation}

We write
\begin{equation}
	U_j^n \approx u(x_j,t_n),
	\quad
	U_j^0=u_0(x_j),
\end{equation}
that is, $U^n_j$ is the numerical approximation of $u(x_j, t_n)$. We discretize the Caputo derivative by the L1 scheme
\begin{equation}
	D_\tau^\alpha U_j^n
	=
	\frac{\tau^{-\alpha}}{\Gamma(2-\alpha)}
	\sum_{k=0}^{n-1}
	a_k
	\left(
	U_j^{n-k}-U_j^{n-k-1}
	\right),
	\quad n=1,\ldots,N,
\end{equation}
where
\begin{equation}
	a_k=(k+1)^{1-\alpha}-k^{1-\alpha}.
\end{equation}
The coefficients satisfy
\begin{equation}
	1=a_0>a_1>a_2>\cdots>0.
\end{equation}

For each direction $i=1,\ldots,d$, we introduce the staggered lattice
\begin{equation}
	\Gamma_i=\mathbb{Z}^d-\frac{1}{2}e_i.
\end{equation}
Next, the $i$-th component of the pressure gradient at the cell interface
$x_{j+\frac{1}{2}e_i}$ can be approximated by
\begin{equation}
	(G_{h,i}U)_{j+\frac{1}{2}e_i}=\sum_{k\in\mathbb{Z}^d}g^i_{k-j-\frac{1}{2}e_i}(h)U_k,
\end{equation}
where $g_\gamma^i(h)$ with $\gamma\in\Gamma_i$ are discrete weights approximating the $i$-th component of $\nabla(-\Delta)^{-\sigma}$.

Furthermore, we impose the following structural assumptions on the weights. First,
they are antisymmetric,
\begin{equation}
	g_{-\gamma}^i(h)=-g_\gamma^i(h),\quad\gamma\in\Gamma_i,\quad i=1,\ldots,d.
\end{equation}
Note that this means that the pressure gradient operator annihilates constants. More precisely, we assume
\begin{equation}
	\sum_{\gamma\in\Gamma_i}^{\mathrm{sym}}g_\gamma^i(h)=0,\quad i=1,\ldots,d,
\end{equation}
where the sum is understood in the symmetric principal-value sense whenever
it is not absolutely convergent. Second, the weights must satisfy the decay estimate
\begin{equation}
	|g_\gamma^i(h)|\leq C h^{2\sigma-1}|\gamma|^{-(d+1-2\sigma)},\quad|\gamma|\rightarrow\infty,
\end{equation}
where $C>0$ is independent of $h$. Finally, for $\beta\in\mathbb{Z}^d$ we define
\begin{equation}
	c_\beta^i(h)=g^i_{\beta-\frac{1}{2}e_i}(h)-g^i_{\beta+\frac{1}{2}e_i}(h).
\end{equation}
Also, we assume that the sum of the directional coefficients satisfies
\begin{equation}\label{eqn:CoefPositive}
	C_\beta(h):=\frac1h\sum_{i=1}^d c_\beta^i(h)\geq0
	\quad\text{for all }\beta\neq0,
\end{equation}
and
\begin{equation}\label{eqn:CoefConsistency}
	\sum_{\beta\in\mathbb{Z}^d}c_\beta^i(h)=0,\quad i=1,\ldots,d.
\end{equation}
These sign and cancellation properties will be used below to establish the discrete maximum principle.

In the sequel, whenever the pressure-gradient operator $G_{h,i}U$ is used, we assume that its defining series is well-defined, either absolutely or in the symmetric principal-value sense specified above. Moreover, whenever the representation in terms of the coefficients $c_\beta^i(h)$ is used, we assume
\begin{equation}
	\sum_{\beta\in\mathbb{Z}^d}|c_\beta^i(h)|<\infty, \quad i=1,\ldots,d.
\end{equation}
The sign, cancellation, and absolute summability conditions are satisfied by semigroup construction in arbitrary dimension; see Proposition~\ref{prop:semigroup_multidimensional}. No sign condition is imposed on an individual $c_\beta^i(h)$ when $d>1$. In particular, if $U\in\ell^1(\mathbb{Z}^d)$ and the weights $g_\gamma^i(h)$ are bounded for fixed $h$, then the series defining $G_{h,i}U$ is absolutely convergent.

For convenience, let
\begin{equation}
	V^i_{j+\frac{1}{2}e_i}(U)=(G_{h,i}U)_{j+\frac{1}{2}e_i},
\end{equation}
denote
\begin{equation}
	r^+=\max\{r,0\},\quad r^-=\min\{r,0\}.
\end{equation}
We define the numerical flux in the $i$-th direction by
\begin{equation}
	F_{j+\frac{1}{2}e_i}^{(i)}(U)=-\left(V^i_{j+\frac{1}{2}e_i}(U)\right)^+U_{j+e_i}^{m-1}-\left(V^i_{j+\frac{1}{2}e_i}(U)\right)^-U_j^{m-1},
\end{equation}
and
\begin{equation}
	F_{j-\frac{1}{2}e_i}^{(i)}(U)=-\left(V^i_{j-\frac{1}{2}e_i}(U)\right)^+U_j^{m-1}-\left(V^i_{j-\frac{1}{2}e_i}(U)\right)^-U_{j-e_i}^{m-1}.
\end{equation}
The discrete spatial operator $\mathcal{N}_h$ is defined by
\begin{equation}
	(\mathcal{N}_hU)_j=\frac{1}{h}\sum_{i=1}^d\left(F_{j+\frac{1}{2}e_i}^{(i)}(U)-F_{j-\frac{1}{2}e_i}^{(i)}(U)\right).
\end{equation}
The fully discrete implicit scheme is then given by
\begin{equation}\label{eq:discrete}
	D_\tau^\alpha U_j^n=-(\mathcal{N}_hU^n)_j,\quad j\in\mathbb{Z}^d,\quad n=1,\ldots,N.
\end{equation}

\subsection{Properties of the discrete scheme}
Next, we establish the maximum principle for the discrete spatial operator.
\begin{theorem}[Discrete maximum principle]
\label{thm:discrete_maximum_principle}
Let $U=\{U_k\}_{k\in\mathbb{Z}^d}$ be bounded and non-negative.
Assume that the coefficients $c_\beta^i(h)$ introduced above satisfy \eqref{eqn:CoefPositive}-\eqref{eqn:CoefConsistency}. If $U$ has a global maximum at an index $j\in\mathbb{Z}^d$, that is,
\begin{equation}
	U_j\geq U_k \quad \text{for all } k\in\mathbb{Z}^d,
\end{equation}
then
\begin{equation}
	(\mathcal{N}_hU)_j\geq 0.
\end{equation}
\end{theorem}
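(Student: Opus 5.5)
The plan is to compare each directional flux difference at the maximizing index $j$ with the same expression in which every neighbouring value $U_{j\pm e_i}^{m-1}$ is replaced by $U_j^{m-1}$. This reduces $(\mathcal N_hU)_j$ to $U_j^{m-1}$ times a linear combination of pressure gradients. That combination is then rewritten through the coefficients $c^i_\beta(h)$, and the summed sign condition \eqref{eqn:CoefPositive} together with the cancellation \eqref{eqn:CoefConsistency} settles the sign.

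\emph{Step 1 (upwind comparison).} Fix a direction $i$ and abbreviate $V_\pm=V^i_{j\pm\frac12e_i}(U)$. Since $U\geq0$, $m>1$ and $U_j$ is a global maximum, we have $0\leq U_{j\pm e_i}^{m-1}\leq U_j^{m-1}$. Using $V_+^+\geq0$ and $V_-^-\leq0$, the two terms involving neighbours satisfy
\begin{equation}
-V_+^+U_{j+e_i}^{m-1}\geq -V_+^+U_j^{m-1},\qquad V_-^-U_{j-e_i}^{m-1}\geq V_-^-U_j^{m-1}.
\end{equation}
Hence, by the definition of the fluxes,
\begin{equation}
F^{(i)}_{j+\frac12e_i}(U)-F^{(i)}_{j-\frac12e_i}(U)\geq U_j^{m-1}\bigl(-V_+^+-V_+^-+V_-^++V_-^-\bigr)=U_j^{m-1}\,(V_--V_+).
\end{equation}

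\emph{Step 2 (rewriting via $c^i_\beta$).} From the definition of $G_{h,i}$ with $k=j+\beta$,
\begin{equation}
V_--V_+=\sum_{\beta\in\mathbb{Z}^d}\bigl(g^i_{\beta+\frac12e_i}(h)-g^i_{\beta-\frac12e_i}(h)\bigr)U_{j+\beta}=-\sum_{\beta\in\mathbb{Z}^d}c^i_\beta(h)\,U_{j+\beta}.
\end{equation}
Merging the two series is legitimate because each is well-defined by the standing assumption, and the resulting series converges absolutely since $\sum_\beta|c^i_\beta(h)|<\infty$ and $U$ is bounded. By \eqref{eqn:CoefConsistency} we may subtract $U_j\sum_\beta c^i_\beta(h)=0$. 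Summing over $i$, dividing by $h$, and noting that the $\beta=0$ term vanishes, Step 1 gives
\begin{equation}
(\mathcal N_hU)_j\geq -U_j^{m-1}\sum_{\beta\neq0}C_\beta(h)\,\bigl(U_{j+\beta}-U_j\bigr).
\end{equation}

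\emph{Step 3 (conclusion).} By \eqref{eqn:CoefPositive}, $C_\beta(h)\geq0$ for every $\beta\neq0$, and $U_{j+\beta}-U_j\leq0$ by maximality. Therefore the sum is non-positive, and together with $U_j^{m-1}\geq0$ this yields $(\mathcal N_hU)_j\geq0$.

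The only delicate point is Step 2. The sign information is available only for the sum over directions, not for each individual $c^i_\beta(h)$, so the directional series must first be put in absolutely convergent form before they are added and regrouped. I would therefore justify the rearrangements explicitly from the absolute summability of $c^i_\beta(h)$ and the boundedness of $U$, rather than relying on the principal-value definition of $G_{h,i}U$.
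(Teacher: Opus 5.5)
Your proof is correct and follows the paper's argument step for step: the upwind comparison replacing $U_{j\pm e_i}^{m-1}$ by $U_j^{m-1}$, the rewriting of $V^i_{j-\frac12e_i}-V^i_{j+\frac12e_i}$ through $c^i_\beta(h)$ together with the cancellation \eqref{eqn:CoefConsistency}, and the summed sign condition \eqref{eqn:CoefPositive} at the end. You are more careful than the paper, which merges the two pressure-gradient series without comment, but note one point: when these series converge only in the symmetric sense about the two different interfaces $x_{j\pm\frac12e_i}$, the merge also needs the decay bound on $g^i_\gamma(h)$, so that the mismatched boundary shell of the two truncation windows vanishes; the absolute summability of $c^i_\beta(h)$ and the boundedness of $U$ alone do not cover this.
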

\begin{proof}
For convenience, write
\begin{equation}
	V^i_{j+\frac{1}{2}e_i}=(G_{h,i}U)_{j+\frac{1}{2}e_i}.
\end{equation}
Then the numerical fluxes are
\begin{equation}
	F_{j+\frac{1}{2}e_i}^{(i)}(U)=-\left(V^i_{j+\frac{1}{2}e_i}\right)^+U_{j+e_i}^{m-1}-\left(V^i_{j+\frac{1}{2}e_i}\right)^-U_j^{m-1},
\end{equation}
and
\begin{equation}
	F_{j-\frac{1}{2}e_i}^{(i)}(U)=-\left(V^i_{j-\frac{1}{2}e_i}\right)^+U_j^{m-1}-\left(V^i_{j-\frac{1}{2}e_i}\right)^-U_{j-e_i}^{m-1}.
\end{equation}
Therefore,
\begin{equation}
	(\mathcal{N}_hU)_j=
	\frac{1}{h}\sum_{i=1}^d
	\left[-\left(V^i_{j+\frac{1}{2}e_i}\right)^+U_{j+e_i}^{m-1}-\left(V^i_{j+\frac{1}{2}e_i}\right)^-U_j^{m-1}+\left(V^i_{j-\frac{1}{2}e_i}\right)^+U_j^{m-1}+\left(V^i_{j-\frac{1}{2}e_i}\right)^-U_{j-e_i}^{m-1}
	\right].
\end{equation}
Since $U_j$ is a global maximum and $U$ is non-negative, we have $0\leq U_{j+e_i}^{m-1}\leq U_j^{m-1}$ and $0\leq U_{j-e_i}^{m-1}\leq U_j^{m-1}$. Moreover,
\begin{equation}
	\left(V^i_m\right)^+\geq0, \quad \left(V^i_m\right)^-\leq0.
\end{equation}
This implies,
\begin{equation}
	-\left(V^i_{j+\frac{1}{2}e_i}\right)^+U_{j+e_i}^{m-1}\geq-\left(V^i_{j+\frac{1}{2}e_i}\right)^+U_j^{m-1},
\end{equation}
while
\begin{equation}
	\left(V^i_{j-\frac{1}{2}e_i}\right)^-U_{j-e_i}^{m-1}\geq\left(V^i_{j-\frac{1}{2}e_i}\right)^-U_j^{m-1}.
\end{equation}
Hence,
\begin{equation}
	(\mathcal{N}_hU)_j\geq\frac{U_j^{m-1}}{h}\sum_{i=1}^d\left[\left(V^i_{j-\frac{1}{2}e_i}\right)^++\left(V^i_{j-\frac{1}{2}e_i}\right)^--\left(V^i_{j+\frac{1}{2}e_i}\right)^+-\left(V^i_{j+\frac{1}{2}e_i}\right)^-\right].
\end{equation}
Using $V^i_m=\left(V^i_m\right)^+ +\left(V^i_m\right)^-$ we obtain
\begin{equation}
	(\mathcal{N}_hU)_j\geq\frac{U_j^{m-1}}{h}\sum_{i=1}^d\left(V^i_{j-\frac{1}{2}e_i}-V^i_{j+\frac{1}{2}e_i}\right).
\end{equation}

By the definition of the discrete pressure gradient, we have that
\begin{equation}
	V^i_{j+\frac{1}{2}e_i}-V^i_{j-\frac{1}{2}e_i}=\sum_{k\in\mathbb{Z}^d}\left(
	g^i_{k-j-\frac{1}{2}e_i}(h)-g^i_{k-j+\frac{1}{2}e_i}(h)\right)U_k=\sum_{\beta\in\mathbb{Z}^d}c_\beta^i(h)U_{j+\beta}.
\end{equation}
Using \eqref{eqn:CoefConsistency} we may rewrite this as
\begin{equation}
	V^i_{j+\frac{1}{2}e_i}-V^i_{j-\frac{1}{2}e_i}=\sum_{\beta\neq0}c_\beta^i(h)\left(U_{j+\beta}-U_j\right).
\end{equation}
But since $U_j$ is a global maximum, we further have
\begin{equation}
	U_{j+\beta}-U_j\leq0 \quad \text{for every } \beta\neq0.
\end{equation}

Next, summing over the spatial directions and using \eqref{eqn:CoefPositive}, we obtain
\begin{equation}
	\frac1h\sum_{i=1}^d\left(V^i_{j+\frac{1}{2}e_i}-V^i_{j-\frac{1}{2}e_i}\right)
	=\sum_{\beta\neq0}C_\beta(h)(U_{j+\beta}-U_j)\leq0.
\end{equation}
Therefore,
\begin{equation}
	\sum_{i=1}^d\left(V^i_{j-\frac{1}{2}e_i}-V^i_{j+\frac{1}{2}e_i}\right)\geq0.
\end{equation}
Finally, since $U_j\geq0$ and $m>1$,
\begin{equation}
	U_j^{m-1}\geq0.
\end{equation}
It follows that
\begin{equation}
	(\mathcal{N}_hU)_j
	\geq
	\frac{U_j^{m-1}}{h}
	\sum_{i=1}^d
	\left(
	V^i_{j-\frac{1}{2}e_i}
	-
	V^i_{j+\frac{1}{2}e_i}
	\right)
	\geq0,
\end{equation}
which proves the claim and concludes the proof. 
\end{proof}

We will also need the following sign property at a zero minimum. Interestingly, this property follows directly from the upwind structure of the numerical flux and does not need any additional assumptions on the sign of the scheme's coefficients.
\begin{lemma}[Sign property at a zero minimum]
\label{lem:zero_minimum_sign}
Let $U=\{U_k\}_{k\in\mathbb{Z}^d}$ be non-negative. If $U_j=0$ for some $j\in\mathbb{Z}^d$, then
\begin{equation}
	(\mathcal{N}_hU)_j\leq0.
\end{equation}
\end{lemma}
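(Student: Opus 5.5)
The plan is to expand $(\mathcal{N}_hU)_j$ explicitly, as in the proof of Theorem~\ref{thm:discrete_maximum_principle}, and then use $U_j=0$ to remove every term carrying the factor $U_j^{m-1}$. Since $m>1$, we have $U_j^{m-1}=0^{m-1}=0$. Writing $V^i_{j\pm\frac12 e_i}=(G_{h,i}U)_{j\pm\frac12 e_i}$, the definition of the fluxes gives
\begin{equation}
	(\mathcal{N}_hU)_j=\frac1h\sum_{i=1}^d\left[-\left(V^i_{j+\frac12 e_i}\right)^+U_{j+e_i}^{m-1}-\left(V^i_{j+\frac12 e_i}\right)^-U_j^{m-1}+\left(V^i_{j-\frac12 e_i}\right)^+U_j^{m-1}+\left(V^i_{j-\frac12 e_i}\right)^-U_{j-e_i}^{m-1}\right].
\end{equation}
Substituting $U_j^{m-1}=0$ leaves only the outflow terms toward the neighbours:
\begin{equation}
	(\mathcal{N}_hU)_j=\frac1h\sum_{i=1}^d\left[-\left(V^i_{j+\frac12 e_i}\right)^+U_{j+e_i}^{m-1}+\left(V^i_{j-\frac12 e_i}\right)^-U_{j-e_i}^{m-1}\right].
\end{equation}

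Next I check the sign of each remaining term. Non-negativity of $U$ gives $U_{j\pm e_i}^{m-1}\ge0$. By definition, $r^+\ge0$ and $r^-\le0$. Hence $-(V^i_{j+\frac12 e_i})^+U_{j+e_i}^{m-1}\le0$ and $(V^i_{j-\frac12 e_i})^-U_{j-e_i}^{m-1}\le0$. Summing over $i$ and multiplying by $1/h>0$ yields $(\mathcal{N}_hU)_j\le0$.

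This is precisely the upwind mechanism: at a cell with zero density, the flux can only carry mass into the cell, never out of it. No sign or summability assumption on $c^i_\beta(h)$ is needed. The only requirement is that the values $V^i_{j\pm\frac12 e_i}$ be finite real numbers, that is, that the series defining $G_{h,i}U$ converge, which is the standing assumption. There is no real obstacle here. The one point to state carefully is the convention $0^{m-1}=0$, which holds because $m>1$.
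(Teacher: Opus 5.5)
Your proof is correct and is essentially the paper's own argument: use $m>1$ to get $U_j^{m-1}=0$, observe that only $-(V^i_{j+\frac12 e_i})^+U_{j+e_i}^{m-1}$ and $(V^i_{j-\frac12 e_i})^-U_{j-e_i}^{m-1}$ survive, and note that each is non-positive, with no assumption on the coefficients $c^i_\beta(h)$. The only slip is in wording: the surviving terms are inflow contributions from the neighbours, as your closing remark correctly says, not outflow terms.
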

\begin{proof}
Notice that since $m>1$ and $U_j=0$, we have $U_j^{m-1}=0$ and, therefore, the two fluxes adjacent to $x_j$ simplify
\begin{equation}
	F_{j+\frac{1}{2}e_i}^{(i)}(U)=-\left(V^i_{j+\frac{1}{2}e_i}\right)^+U_{j+e_i}^{m-1},
\end{equation}
and
\begin{equation}
	F_{j-\frac{1}{2}e_i}^{(i)}(U)=-\left(V^i_{j-\frac{1}{2}e_i}\right)^-U_{j-e_i}^{m-1}.
\end{equation}
Hence,
\begin{equation}
	(\mathcal{N}_hU)_j=\frac{1}{h}\sum_{i=1}^d\left[-\left(V^i_{j+\frac{1}{2}e_i}\right)^+ U_{j+e_i}^{m-1}+\left(V^i_{j-\frac{1}{2}e_i}\right)^-U_{j-e_i}^{m-1}\right].
\end{equation}
Since $U$ is non-negative,
\begin{equation}
	U_{j+e_i}^{m-1}\geq0, \quad U_{j-e_i}^{m-1}\geq0.
\end{equation}
Moreover,
\begin{equation}
	\left(V^i_{j+\frac{1}{2}e_i}\right)^+\geq0, \quad \left(V^i_{j-\frac{1}{2}e_i}\right)^-\leq0.
\end{equation}
Thus, every term in the sum is non-positive and therefore $(\mathcal{N}_hU)_j\leq0$.
\end{proof}
Next, we introduce the discrete diffusion operator associated with the pressure-gradient approximation. For a grid function $U$, define
\begin{equation}
	(L_hU)_j=-\frac{1}{h}\sum_{i=1}^d\left((G_{h,i}U)_{j+\frac{1}{2}e_i}-(G_{h,i}U)_{j-\frac{1}{2}e_i}\right).
\end{equation}
Thus, $L_h=-\nabla_h\cdot G_h$ is the discrete version of $(-\Delta)^{1-\sigma}$. Using the definition of $c_\beta^i(h)$ further, we have
\begin{equation}
	(G_{h,i}U)_{j+\frac{1}{2}e_i}-(G_{h,i}U)_{j-\frac{1}{2}e_i}=\sum_{\beta\in\mathbb{Z}^d}c_\beta^i(h)U_{j+\beta}.
\end{equation}
Because $\sum_{\beta\in\mathbb{Z}^d}c_\beta^i(h)=0$, it follows that
\begin{equation}
	(L_hU)_j=\frac{1}{h}\sum_{i=1}^d\sum_{\beta\neq0}c_\beta^i(h)\left(U_j-U_{j+\beta}\right).
\end{equation}
Moreover, the antisymmetry of the weights $g_\gamma^i(h)$ implies that
\begin{equation}
	c_{-\beta}^i(h)=c_\beta^i(h).
\end{equation}
To see this, note that
\begin{equation}
	c_{-\beta}^i(h)=g^i_{-\beta-\frac{1}{2}e_i}(h)-g^i_{-\beta+\frac{1}{2}e_i}(h)=-g^i_{\beta+\frac{1}{2}e_i}(h)+g^i_{\beta-\frac{1}{2}e_i}(h)=c_\beta^i(h).
\end{equation}
Therefore, the coefficients $C_\beta(h)$ introduced in \eqref{eqn:CoefPositive} satisfy $C_\beta(h)=C_{-\beta}(h)\geq0$, and $L_h$ admits the representation
\begin{equation}
	(L_hU)_j=\sum_{\beta\neq0}C_\beta(h)\left(U_j-U_{j+\beta}\right).
\end{equation}
\begin{lemma}[Discrete Stroock-Varopoulos-type inequality]
\label{lem:discrete_sv}

Let $U=\{U_j\}_{j\in\mathbb{Z}^d}$ be a grid function and let $\Phi:\mathbb{R}\rightarrow\mathbb{R}$ be non-decreasing. Furthermore, assume that all sums appearing below are absolutely convergent. Then
\begin{equation}
	\sum_{j\in\mathbb{Z}^d}\Phi(U_j)(L_hU)_j=\frac{1}{2}\sum_{j\in\mathbb{Z}^d}\sum_{\beta\neq0}C_\beta(h)\left(\Phi(U_j)-\Phi(U_{j+\beta})\right)\left(U_j-U_{j+\beta}\right)\geq0.
\end{equation}
\end{lemma}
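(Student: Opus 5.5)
The plan is the standard symmetrization argument. It rests on the representation $(L_hU)_j=\sum_{\beta\neq0}C_\beta(h)(U_j-U_{j+\beta})$ derived just before the statement, together with the symmetry $C_{-\beta}(h)=C_\beta(h)\geq0$.

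First, I will write
\begin{equation}
S:=\sum_{j\in\mathbb{Z}^d}\Phi(U_j)(L_hU)_j=\sum_{j\in\mathbb{Z}^d}\sum_{\beta\neq0}C_\beta(h)\Phi(U_j)\left(U_j-U_{j+\beta}\right).
\end{equation}
Because the double series is assumed absolutely convergent, Fubini's theorem for series lets me reorder and reindex freely. In the double sum I will substitute $k=j+\beta$ and $\beta\mapsto-\beta$, so $j=k+(-\beta)$. Using $C_{-\beta}(h)=C_\beta(h)$, this gives the second representation
\begin{equation}
S=\sum_{k\in\mathbb{Z}^d}\sum_{\beta\neq0}C_\beta(h)\Phi(U_{k+\beta})\left(U_{k+\beta}-U_k\right)=-\sum_{j}\sum_{\beta\neq0}C_\beta(h)\Phi(U_{j+\beta})\left(U_j-U_{j+\beta}\right).
\end{equation}
Averaging the two expressions for $S$ yields the claimed identity
\begin{equation}
S=\frac12\sum_j\sum_{\beta\neq0}C_\beta(h)\left(\Phi(U_j)-\Phi(U_{j+\beta})\right)\left(U_j-U_{j+\beta}\right).
\end{equation}

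Nonnegativity then follows termwise. Since $\Phi$ is non-decreasing, $(\Phi(a)-\Phi(b))(a-b)\geq0$ for all real $a,b$. By \eqref{eqn:CoefPositive}, $C_\beta(h)\geq0$ for every $\beta\neq0$. Hence every summand is non-negative.

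The only delicate point is justifying the reindexing and the splitting of $S$ into sums. This requires absolute convergence of $\sum_j\sum_\beta C_\beta|\Phi(U_j)||U_j-U_{j+\beta}|$, and also of the version with $\Phi(U_{j+\beta})$. I take both as given by the hypothesis that all sums appearing are absolutely convergent. I will state explicitly that this hypothesis is read as covering both pieces. With that reading, no further estimates are needed.
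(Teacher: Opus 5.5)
Your proposal is correct and follows the same route as the paper. Both arguments use the representation of $L_h$ through $C_\beta(h)$, the reindexing $(j,\beta)\mapsto(j+\beta,-\beta)$ together with $C_{-\beta}(h)=C_\beta(h)$, the averaging of the two expressions, and termwise nonnegativity from the monotonicity of $\Phi$ and \eqref{eqn:CoefPositive}. One small aside: you do not need to assume separately that the $\Phi(U_{j+\beta})$ version converges absolutely, because the same bijective reindexing (Tonelli for non-negative terms) maps it onto the first series.
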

\begin{proof}
By the representation of $L_h$ obtained above,
\begin{equation}
	\sum_{j\in\mathbb{Z}^d}\Phi(U_j)(L_hU)_j=\sum_{j\in\mathbb{Z}^d}\sum_{\beta\neq0}C_\beta(h)\Phi(U_j)\left(U_j-U_{j+\beta}\right).
\end{equation}

Changing variables $j\mapsto j+\beta$ and $\beta\mapsto-\beta$, and using $C_{-\beta}(h)=C_\beta(h)$, gives us
\begin{equation}
	\sum_{j\in\mathbb{Z}^d}\Phi(U_j)(L_hU)_j=-\sum_{j\in\mathbb{Z}^d}\sum_{\beta\neq0}C_\beta(h)\Phi(U_{j+\beta})\left(U_j-U_{j+\beta}\right).
\end{equation}
Next, averaging the last two identities yields
\begin{equation}
	\sum_{j\in\mathbb{Z}^d}\Phi(U_j)(L_hU)_j=\frac{1}{2}\sum_{j\in\mathbb{Z}^d}\sum_{\beta\neq0}C_\beta(h)\left(\Phi(U_j)-\Phi(U_{j+\beta})\right)\left(U_j-U_{j+\beta}\right).
\end{equation}
Since $\Phi$ is non-decreasing, we have
\begin{equation}
	\left(\Phi(U_j)-\Phi(U_{j+\beta})\right)\left(U_j-U_{j+\beta}\right)\geq0.
\end{equation}
Together with $C_\beta(h)\geq0$, this concludes the proof. 
\end{proof}

We can now proceed to the positivity property of the nonlinear spatial operator $\mathcal{N}_h$.
\begin{lemma}[Positivity of $\mathcal{N}_h$]
\label{lem:Nh_positivity}

Let $U$ be bounded and non-negative, and assume that all sums appearing below are absolutely convergent. Then
\begin{equation}
	\sum_{j\in\mathbb{Z}^d}U_j(\mathcal{N}_hU)_j\geq0.
\end{equation}
\end{lemma}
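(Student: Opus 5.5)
The plan is to reduce the claim to the discrete Stroock--Varopoulos inequality (Lemma~\ref{lem:discrete_sv}). I would do this in three moves: sum by parts in each direction, use the upwind structure of the flux together with a convexity estimate, and then sum by parts back.

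\textbf{Step 1: summation by parts.} Fix a direction $i$. Shifting $j\mapsto j+e_i$ in the sum containing $F^{(i)}_{j-\frac12 e_i}$ gives
\begin{equation}
\sum_{j}U_j(\mathcal N_hU)_j=-\frac1h\sum_{i=1}^d\sum_{j}F^{(i)}_{j+\frac12e_i}(U)\left(U_{j+e_i}-U_j\right).
\end{equation}
This shift is legitimate because all sums are assumed absolutely convergent, so they may be split and reindexed. Inserting the definition of the flux, the right-hand side equals
\begin{equation}
\frac1h\sum_{i}\sum_j\left[(V^i_{j+\frac12e_i})^+U_{j+e_i}^{m-1}+(V^i_{j+\frac12e_i})^-U_j^{m-1}\right](U_{j+e_i}-U_j).
\end{equation}

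\textbf{Step 2: convexity estimate.} Let $\Psi(s)=s^m/m$ for $s\geq0$. Since $m>1$, $\Psi$ is convex with $\Psi'(s)=s^{m-1}$, and it is non-decreasing. For $a,b\geq0$ convexity gives
\begin{equation}
\Psi'(a)(b-a)\leq\Psi(b)-\Psi(a)\leq\Psi'(b)(b-a).
\end{equation}
Put $a=U_j$, $b=U_{j+e_i}$ and $V=V^i_{j+\frac12e_i}$.
\begin{itemize}
\item If $V\geq0$, only the $V^+$ term survives, and the upper bound gives $V\,b^{m-1}(b-a)\geq V(\Psi(b)-\Psi(a))$.
\item If $V\leq0$, only the $V^-$ term survives, and multiplying the lower bound by $V\leq0$ gives $V\,a^{m-1}(b-a)\geq V(\Psi(b)-\Psi(a))$.
\end{itemize}
So in both cases the bracketed summand is bounded below by $V^i_{j+\frac12e_i}\bigl(\Psi(U_{j+e_i})-\Psi(U_j)\bigr)$. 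This pointwise inequality is the real content of the lemma: it is exactly where the upwind choice of the mobility is used.

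\textbf{Step 3: reverse summation by parts and apply Lemma~\ref{lem:discrete_sv}.} Summing by parts backwards, and using the definition of $L_h$, gives
\begin{equation}
\frac1h\sum_i\sum_j V^i_{j+\frac12e_i}\left(\Psi(U_{j+e_i})-\Psi(U_j)\right)=\sum_j\Psi(U_j)(L_hU)_j.
\end{equation}
Extend $\Psi$ to a non-decreasing function on $\mathbb R$, for instance by $0$ on $(-\infty,0)$; this changes nothing because $U\geq0$. Lemma~\ref{lem:discrete_sv} with $\Phi=\Psi$ then shows the right-hand side is non-negative. Combined with Step~2, this yields $\sum_jU_j(\mathcal N_hU)_j\geq0$.

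\textbf{Main obstacle.} The algebra is straightforward; the delicate point is justifying the two summations by parts and the termwise comparison on the infinite lattice. Each needs absolute convergence of sums such as $\sum_j |V^i_{j+\frac12e_i}|\,U_j^{m-1}U_{j+e_i}$ and $\sum_j|V^i_{j+\frac12e_i}|\,\Psi(U_j)$. I would take these from the standing hypothesis that all sums involved are absolutely convergent. Concretely, I would use boundedness of $U$ together with the assumed convergence of the sums involving $V^i$ times powers of $U$, so that $\Psi(U_j)\leq \|U\|_\infty^{m-1}U_j/m$ inherits summability. If needed, I would instead truncate to a finite box and pass to the limit.
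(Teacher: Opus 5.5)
Your proposal is correct and follows the paper's proof step for step: summation by parts, then the pointwise upwind bound $(U_{j+e_i}-U_j)\bigl[(V^i_{j+\frac12e_i})^+U_{j+e_i}^{m-1}+(V^i_{j+\frac12e_i})^-U_j^{m-1}\bigr]\geq V^i_{j+\frac12e_i}\bigl(\Phi(U_{j+e_i})-\Phi(U_j)\bigr)$, then reverse summation by parts to $\sum_j\Phi(U_j)(L_hU)_j$, then Lemma~\ref{lem:discrete_sv}; for the pointwise bound the paper uses $\Phi(b)-\Phi(a)=\int_a^b s^{m-1}\,ds$ with monotonicity of $s^{m-1}$, which is equivalent to your two-sided convexity inequality. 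Your extension of $\Psi$ by zero on $(-\infty,0)$, so that the hypothesis $\Phi:\mathbb{R}\to\mathbb{R}$ of that lemma is met, is a small detail the paper leaves implicit.
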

\begin{proof}
Define $\Phi(s)=s^m/m$ for $s\geq0$. Since $m>1$, $\Phi$ is increasing, and $\Phi'(s)=s^{m-1}$. Using the conservative form of $\mathcal{N}_h$ and summation by parts, we obtain
\begin{equation}
	\sum_{j\in\mathbb{Z}^d}U_j(\mathcal{N}_hU)_j=-\frac{1}{h}\sum_{i=1}^d\sum_{j\in\mathbb{Z}^d}(U_{j+e_i}-U_j)F_{j+\frac{1}{2}e_i}^{(i)}(U).
\end{equation}
Substituting the upwind flux gives
\begin{equation}
	\sum_{j\in\mathbb{Z}^d}U_j(\mathcal{N}_hU)_j=\frac{1}{h}\sum_{i=1}^d\sum_{j\in\mathbb{Z}^d}(U_{j+e_i}-U_j)\left[\left(V^i_{j+\frac{1}{2}e_i}\right)^+U_{j+e_i}^{m-1}+\left(V^i_{j+\frac{1}{2}e_i}\right)^-U_j^{m-1}\right].
\end{equation}
We claim that for every $i$ and $j$, we have
\begin{equation}
	(U_{j+e_i}-U_j)\left[\left(V^i_{j+\frac{1}{2}e_i}\right)^+U_{j+e_i}^{m-1}+\left(V^i_{j+\frac{1}{2}e_i}\right)^-U_j^{m-1}\right]\geq V^i_{j+\frac{1}{2}e_i}\left(\Phi(U_{j+e_i})-\Phi(U_j)\right).
\end{equation}
To check this, recall that for any $a,b\geq0$, it follows that
\begin{equation}
	\Phi(b)-\Phi(a)=\int_a^b s^{m-1}\,ds.
\end{equation}
If $V^i_{j+\frac{1}{2}e_i}\geq0$, then
\begin{equation}
	(U_{j+e_i}-U_j)U_{j+e_i}^{m-1}\geq\Phi(U_{j+e_i})-\Phi(U_j),
\end{equation}
because $s^{m-1}$ is increasing. Multiplication by $V^i_{j+\frac{1}{2}e_i}\geq0$ does not change the inequality. If $V^i_{j+\frac{1}{2}e_i}<0$, then
\begin{equation}
	(U_{j+e_i}-U_j)U_j^{m-1}\leq\Phi(U_{j+e_i})-\Phi(U_j).
\end{equation}
Multiplication by $V^i_{j+\frac{1}{2}e_i}<0$ reverses the inequality. Hence, the claimed lower bound holds in both cases.

Therefore,
\begin{equation}
	\sum_{j\in\mathbb{Z}^d}U_j(\mathcal{N}_hU)_j\geq\frac{1}{h}\sum_{i=1}^d\sum_{j\in\mathbb{Z}^d}V^i_{j+\frac{1}{2}e_i}\left(\Phi(U_{j+e_i})-\Phi(U_j)\right).
\end{equation}
By the discrete summation by parts and the definition of $L_h$, we further have
\begin{equation}
	\frac{1}{h}\sum_{i=1}^d\sum_{j\in\mathbb{Z}^d}V^i_{j+\frac{1}{2}e_i}\left(\Phi(U_{j+e_i})-\Phi(U_j)\right)=\sum_{j\in\mathbb{Z}^d}\Phi(U_j)(L_hU)_j.
\end{equation}
The discrete Stroock-Varopoulos-type inequality therefore yields
\begin{equation}
	\sum_{j\in\mathbb{Z}^d}U_j(\mathcal{N}_hU)_j\geq\sum_{j\in\mathbb{Z}^d}\Phi(U_j)(L_hU)_j\geq0,
\end{equation}
and this finishes the proof. 
\end{proof}

Next, we establish the $\ell^\infty$ stability of the implicit scheme.
\begin{lemma}[Stability in $\ell^\infty$]
\label{infstab}
Let $U^0\in\ell^\infty(\mathbb{Z}^d)$ be non-negative, and assume that $U^n=\{U_j^n\}_{j\in\mathbb{Z}^d}$ is a bounded non-negative solution to \eqref{eq:discrete} for $n=1,\ldots,N$.

Assume additionally that, for every $n$,
$U_j^n\rightarrow0$ as $|j|\rightarrow\infty$. Then, for any $h>0$ and $\tau>0$,
\begin{equation}
	\|U^n\|_\infty\leq\|U^0\|_\infty, \quad n=0,\ldots,N.
\end{equation}
\end{lemma}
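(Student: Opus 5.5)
We argue by induction on $n$, mimicking the continuous proof of Lemma~\ref{lem:continuous_linf} but with the L1 weights in place of the integral representation \eqref{eq:CaputoDerivative}. Set $M_n=\max_{0\leq k\leq n}\|U^k\|_\infty$. It suffices to show that $\|U^n\|_\infty\leq M_{n-1}$ for each $n\geq1$. Indeed, by induction this gives $M_n=M_0=\|U^0\|_\infty$ for all $n$.

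Fix $n\geq1$. If $U^n\equiv0$ there is nothing to prove, so assume $\|U^n\|_\infty>0$. Because $U^n$ is non-negative, bounded and satisfies $U^n_j\to0$ as $|j|\to\infty$, the supremum is attained at some index $j_*$. This is exactly where the decay assumption enters: without it the maximum could sit only at infinity and the discrete maximum principle would not apply. Thus $U^n_{j_*}=\|U^n\|_\infty\geq U^n_k$ for all $k$. Theorem~\ref{thm:discrete_maximum_principle} then yields $(\mathcal N_hU^n)_{j_*}\geq0$, and the scheme \eqref{eq:discrete} gives $D^\alpha_\tau U^n_{j_*}\leq0$.

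Next we rewrite the L1 operator in history form by summation by parts:
\begin{equation}
\Gamma(2-\alpha)\tau^\alpha D_\tau^\alpha U_{j}^n = a_0U_j^n-\sum_{k=1}^{n-1}(a_{k-1}-a_k)U_j^{n-k}-a_{n-1}U_j^0 .
\end{equation}
Since $a_0=1$ and the coefficients are positive and decreasing, the weights $a_{k-1}-a_k$ and $a_{n-1}$ are positive. They also telescope: $\sum_{k=1}^{n-1}(a_{k-1}-a_k)+a_{n-1}=a_0=1$. Applying this at $j_*$ together with $D^\alpha_\tau U^n_{j_*}\leq0$ gives
\begin{equation}
U^n_{j_*}\leq\sum_{k=1}^{n-1}(a_{k-1}-a_k)U^{n-k}_{j_*}+a_{n-1}U^0_{j_*}\leq M_{n-1}.
\end{equation}
Hence $\|U^n\|_\infty\leq M_{n-1}$, which closes the induction. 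No restriction linking $h$ and $\tau$ is needed.

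The only delicate points are the attainment of the maximum, which is handled by the decay hypothesis, and the applicability of Theorem~\ref{thm:discrete_maximum_principle}. The latter requires the standing sign and cancellation assumptions \eqref{eqn:CoefPositive}--\eqref{eqn:CoefConsistency} and convergence of the series defining $G_{h,i}U^n$, which hold for bounded $U^n$ under the stated summability assumptions. Apart from these, the argument is a convex-combination bound.
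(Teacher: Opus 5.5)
Your proof is correct and follows essentially the same route as the paper. The paper also attains the maximum of $U^n$ via the decay hypothesis, applies Theorem~\ref{thm:discrete_maximum_principle} at that index, rewrites the L1 operator as a convex combination of earlier time levels, and closes by induction. Your bookkeeping with $M_n=\max_{0\le k\le n}\|U^k\|_\infty$ is only a repackaging of the paper's strong induction.
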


\begin{proof}
Let $\lambda_\alpha=\Gamma(2-\alpha)\tau^\alpha$. Multiplying \eqref{eq:discrete} by $\Gamma(2-\alpha)\tau^\alpha$ and rearranging the L1 formula, we obtain
\begin{equation}
	U_j^n+\lambda_\alpha(\mathcal{N}_hU^n)_j=\sum_{k=0}^{n-2}(a_k-a_{k+1})U_j^{n-1-k}+a_{n-1}U_j^0,
\end{equation}
where the sum is understood to be empty when $n=1$.

Since the coefficients $a_k$ are positive and decreasing,
\begin{equation}
	a_k-a_{k+1}>0.
\end{equation}
Moreover, all the coefficients sum to $1$,
\begin{equation}
	\sum_{k=0}^{n-2}(a_k-a_{k+1})+a_{n-1}=a_0=1.
\end{equation}
 Now, fix $n\in\{1,\ldots,N\}$. Since $U^n$ is non-negative and vanishes as $|j|\rightarrow\infty$, there exists an index $j_n\in\mathbb{Z}^d$ such that
\begin{equation}
	U_{j_n}^n=\max_{j\in\mathbb{Z}^d}U_j^n=\|U^n\|_\infty.
\end{equation}
By the discrete maximum principle,
\begin{equation}
	(\mathcal{N}_hU^n)_{j_n}\geq0.
\end{equation}
Evaluating the scheme at $j_n$ therefore gives
\begin{equation}
	\|U^n\|_\infty\leq\sum_{k=0}^{n-2}(a_k-a_{k+1})U_{j_n}^{n-1-k}+a_{n-1}U_{j_n}^0.
\end{equation}
Consequently,
\begin{equation}
	\|U^n\|_\infty\leq\sum_{k=0}^{n-2}(a_k-a_{k+1})\|U^{n-1-k}\|_\infty+a_{n-1}\|U^0\|_\infty.
\end{equation}

We now proceed by induction. For $n=1$, the preceding inequality reduces to
\begin{equation}
	\|U^1\|_\infty\leq\|U^0\|_\infty.
\end{equation}
Assume that $\|U^r\|_\infty\leq\|U^0\|_\infty$ for $r=1,\ldots,n-1$. Then
\begin{equation}
	\|U^n\|_\infty\leq\left(\sum_{k=0}^{n-2}(a_k-a_{k+1})+a_{n-1}\right)\|U^0\|_\infty.
\end{equation}
Since the coefficients form a convex combination,
\begin{equation}
	\|U^n\|_\infty\leq\|U^0\|_\infty.
\end{equation}
This completes the proof.
\end{proof}

\begin{remark}
If $U^n\in\ell^1(\mathbb{Z}^d)$, then $U_j^n\rightarrow0$ as $|j|\rightarrow\infty$, so the additional decay assumption in Lemma~\ref{infstab} is automatic. Thus, once the existence of non-negative $\ell^1$ solutions is established below, the $\ell^\infty$ estimate applies directly to those solutions.
\end{remark}
Next, we state the mass conservation property of the scheme.

\begin{lemma}[Mass conservation]
\label{lem:mass_conservation}
Let $U^0\in\ell^1(\mathbb{Z}^d)$ be non-negative and assume that
$U^n\in\ell^1(\mathbb{Z}^d)$ is a non-negative solution to
\eqref{eq:discrete} for $n=1,\ldots,N$.

Assume additionally that, for all $n=1,\ldots,N$ and
$i=1,\ldots,d$, the numerical flux is absolutely summable,
\begin{equation}
	\sum_{j\in\mathbb{Z}^d}\left|F_{j+\frac{1}{2}e_i}^{(i)}(U^n)\right|<\infty.
\end{equation}
Then
\begin{equation}
	h^d\sum_{j\in\mathbb{Z}^d}U_j^n=h^d\sum_{j\in\mathbb{Z}^d}U_j^0, \quad n=0,\ldots,N.
\end{equation}
In particular, since the solution is non-negative,
\begin{equation}
	\|U^n\|_{1,h}=\|U^0\|_{1,h}, \quad n=0,\ldots,N,
\end{equation}
where $\|U\|_{1,h}=h^d\sum_{j\in\mathbb{Z}^d}|U_j|$.
\end{lemma}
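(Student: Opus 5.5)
The plan is to sum the scheme \eqref{eq:discrete} over all $j\in\mathbb{Z}^d$, show that the spatial part contributes zero, and then use an induction on $n$ based on the L1 structure.

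\textbf{Step 1: the flux divergence sums to zero.} Fix $n$ and $i$. By the absolute summability assumption on the fluxes, the two series
\begin{equation}
\sum_{j}F^{(i)}_{j+\frac12e_i}(U^n), \qquad \sum_j F^{(i)}_{j-\frac12e_i}(U^n)
\end{equation}
converge absolutely. The second is a reindexing $j\mapsto j+e_i$ of the first, because $F^{(i)}_{j-\frac12e_i}$ is exactly the interface flux at $x_{(j-e_i)+\frac12e_i}$. The upwind definitions agree on this point: the formula for $F_{j-\frac12 e_i}$ is the formula for $F_{(j-e_i)+\frac12 e_i}$. Hence the two sums are equal, so the difference splits legitimately and
\begin{equation}
\sum_j (\mathcal N_hU^n)_j = \frac1h\sum_{i=1}^d\Big(\sum_j F^{(i)}_{j+\frac12e_i}(U^n) - \sum_j F^{(i)}_{j-\frac12e_i}(U^n)\Big) = 0.
\end{equation}

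\textbf{Step 2: the L1 history term.} Set $M^n=\sum_j U^n_j$, which is finite since $U^n\in\ell^1$. Sum the L1 formula over $j$. It is a finite combination of $\ell^1$ sequences, so the order of summation can be exchanged:
\begin{equation}
\sum_j D^\alpha_\tau U^n_j=\frac{\tau^{-\alpha}}{\Gamma(2-\alpha)}\sum_{k=0}^{n-1}a_k\,(M^{n-k}-M^{n-k-1}).
\end{equation}
By Step 1 this equals zero for every $n=1,\dots,N$.

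\textbf{Step 3: induction.} For $n=1$ the identity reduces to $a_0(M^1-M^0)=0$, and $a_0=1$ gives $M^1=M^0$. Suppose $M^r=M^0$ for $r\le n-1$. Then every term with $k\ge1$ vanishes, leaving $a_0(M^n-M^{n-1})=0$, so $M^n=M^0$. Multiplying by $h^d$ gives the mass identity. Since $U^n\ge0$, we have $\|U^n\|_{1,h}=h^dM^n$, which is the norm statement.

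\textbf{Main obstacle.} The delicate point is the telescoping in Step 1. On an infinite lattice, writing the sum of differences as a difference of sums and then shifting the index is only justified because each flux series converges absolutely. This is exactly why the hypothesis is imposed. Everything else is finite algebra.
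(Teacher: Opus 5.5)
Your proposal is correct and follows the paper's proof: you sum the scheme over $j$, kill the flux divergence with the index shift $j\mapsto j+e_i$ (justified by the absolute summability of the fluxes), and then induct on $D_\tau^\alpha M^n=0$. The only cosmetic difference is that you run the induction on the increment form of the L1 operator, where all history increments vanish, while the paper uses the equivalent convex-combination form $M^n=\sum_{k=0}^{n-2}(a_k-a_{k+1})M^{n-1-k}+a_{n-1}M^0$.
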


\begin{proof}
Summing \eqref{eq:discrete} over $j\in\mathbb{Z}^d$, we obtain
\begin{equation}
	\sum_{j\in\mathbb{Z}^d}D_\tau^\alpha U_j^n=-\sum_{j\in\mathbb{Z}^d}(\mathcal{N}_hU^n)_j.
\end{equation}
Using the conservative form of $\mathcal{N}_h$ gives
\begin{equation}
	\sum_{j\in\mathbb{Z}^d}(\mathcal{N}_hU^n)_j=\frac{1}{h}\sum_{i=1}^d\sum_{j\in\mathbb{Z}^d}\left(F_{j+\frac{1}{2}e_i}^{(i)}(U^n)-F_{j-\frac{1}{2}e_i}^{(i)}(U^n)\right).
\end{equation}
For every $i$, we have
\begin{equation}
	\sum_{j\in\mathbb{Z}^d}F_{j-\frac{1}{2}e_i}^{(i)}(U^n)=\sum_{j\in\mathbb{Z}^d}F_{j+\frac{1}{2}e_i}^{(i)}(U^n).
\end{equation}
Hence,
\begin{equation}
	\sum_{j\in\mathbb{Z}^d}(\mathcal{N}_hU^n)_j=0,
\end{equation}
and therefore
\begin{equation}
	\sum_{j\in\mathbb{Z}^d}D_\tau^\alpha U_j^n=0.
\end{equation}
Let
\begin{equation}
	M^n=h^d\sum_{j\in\mathbb{Z}^d}U_j^n.
\end{equation}

Since the L1 operator acts only on the time variable and is linear, we obtain
\begin{equation}
	D_\tau^\alpha M^n=0.
\end{equation}
Equivalently,
\begin{equation}
	M^n=\sum_{k=0}^{n-2}(a_k-a_{k+1})M^{n-1-k}+a_{n-1}M^0.
\end{equation}
We prove by induction that $M^n=M^0$. For $n=1$, the preceding identity gives $M^1=M^0$. Assume that $M^r=M^0$ for $r=1,\ldots,n-1$. Then
\begin{equation}
	M^n=\left(\sum_{k=0}^{n-2}(a_k-a_{k+1})+a_{n-1}\right)M^0.
\end{equation}
Since
\begin{equation}
	\sum_{k=0}^{n-2}(a_k-a_{k+1})+a_{n-1}=a_0=1,
\end{equation}
we conclude that
\begin{equation}
	M^n=M^0.
\end{equation}
Thus
\begin{equation}
	h^d\sum_{j\in\mathbb{Z}^d}U_j^n=h^d\sum_{j\in\mathbb{Z}^d}U_j^0.
\end{equation}
Since $U^n$ is non-negative, this is equivalent to
\begin{equation}
	\|U^n\|_{1,h}=\|U^0\|_{1,h},
\end{equation}
which is our claim. 
\end{proof}
\begin{lemma}[Stability in $\ell^2$]
\label{lem:l2_stability}
Let $U^0\in\ell^2(\mathbb{Z}^d)$ be non-negative, and assume that $U^n=\{U_j^n\}_{j\in\mathbb{Z}^d}$ is a bounded non-negative solution to \eqref{eq:discrete} for $n=1,\ldots,N$.

Assume that the sums required in Lemma~\ref{lem:Nh_positivity} are absolutely convergent. Then
\begin{equation}
	\|U^n\|_{2,h}\leq\|U^0\|_{2,h}, \quad n=0,\ldots,N,
\end{equation}
where $\|U\|_{2,h}^2=h^d\sum_{j\in\mathbb{Z}^d}|U_j|^2$.
\end{lemma}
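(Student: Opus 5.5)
We test the scheme \eqref{eq:discrete} against $U^n$ itself and combine Lemma~\ref{lem:Nh_positivity} with a standard positivity property of the L1 operator. Multiplying \eqref{eq:discrete} by $h^dU_j^n$ and summing over $j\in\mathbb{Z}^d$ gives
\begin{equation}
	h^d\sum_{j\in\mathbb{Z}^d}U_j^n D_\tau^\alpha U_j^n=-h^d\sum_{j\in\mathbb{Z}^d}U_j^n(\mathcal{N}_hU^n)_j\leq0,
\end{equation}
where the inequality is exactly Lemma~\ref{lem:Nh_positivity}, applicable since $U^n$ is bounded, non-negative, and the relevant sums converge absolutely by assumption. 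All remaining sums are finite because $\ell^2$ membership of each $U^r$ will be carried along inductively.

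Next we rewrite the L1 formula in convex-combination form, as in the proof of Lemma~\ref{infstab}:
\begin{equation}
	\lambda_\alpha^{-1}\Big(U_j^n-\sum_{k=0}^{n-2}(a_k-a_{k+1})U_j^{n-1-k}-a_{n-1}U_j^0\Big),\qquad \lambda_\alpha=\Gamma(2-\alpha)\tau^\alpha.
\end{equation}
Multiplying by $U_j^n$ and using $ab\leq\frac12(a^2+b^2)$ on each cross term, with non-negative weights $a_k-a_{k+1}$ and $a_{n-1}$, yields
\begin{equation}
	U_j^nD_\tau^\alpha U_j^n\geq \frac{1}{2\lambda_\alpha}\Big((U_j^n)^2-\sum_{k=0}^{n-2}(a_k-a_{k+1})(U_j^{n-1-k})^2-a_{n-1}(U_j^0)^2\Big).
\end{equation}
Here we use that the weights sum to $1$, so the leftover $\frac12(U_j^n)^2$ terms combine exactly. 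Summing in $j$ and combining with the first display gives
\begin{equation}
	\|U^n\|_{2,h}^2\leq\sum_{k=0}^{n-2}(a_k-a_{k+1})\|U^{n-1-k}\|_{2,h}^2+a_{n-1}\|U^0\|_{2,h}^2.
\end{equation}

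The conclusion follows by the same strong induction as in Lemma~\ref{infstab}: the case $n=1$ gives $\|U^1\|_{2,h}\leq\|U^0\|_{2,h}$, and the convex-combination identity $\sum_{k=0}^{n-2}(a_k-a_{k+1})+a_{n-1}=1$ propagates the bound. This also confirms inductively that each $U^n\in\ell^2$, so the summations above are legitimate.

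The only delicate point is justifying the summation: the splitting of $\sum_j U_j^n D_\tau^\alpha U_j^n$ requires the previous levels to be in $\ell^2$, which the induction supplies. The application of Lemma~\ref{lem:Nh_positivity} relies on the stated absolute convergence hypothesis. The core inequality itself is the elementary L1 positivity step and should present no real obstacle.
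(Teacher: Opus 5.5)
Your proposal is correct and follows essentially the same route as the paper. You test the scheme with $h^dU_j^n$, invoke Lemma~\ref{lem:Nh_positivity}, and use the L1 convexity inequality $2v^nD_\tau^\alpha v^n\geq D_\tau^\alpha\big((v^n)^2\big)$, which you obtain from $ab\leq\frac12(a^2+b^2)$ where the paper writes out the exact non-negative remainder $\sum_{k}(a_k-a_{k+1})(v^n-v^{n-1-k})^2+a_{n-1}(v^n-v^0)^2$; you then close with the same convex-combination induction. Your remark that the induction itself guarantees $U^n\in\ell^2$, which legitimizes the summations, is slightly more careful than the paper's write-up, though your second display is missing its left-hand side $D_\tau^\alpha U_j^n=$.
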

\begin{proof}
Multiplying \eqref{eq:discrete} by $h^dU_j^n$ and summing over $j\in\mathbb{Z}^d$, we obtain
\begin{equation}
	h^d\sum_{j\in\mathbb{Z}^d}U_j^nD_\tau^\alpha U_j^n=-h^d\sum_{j\in\mathbb{Z}^d}U_j^n(\mathcal{N}_hU^n)_j.
\end{equation}
By Lemma~\ref{lem:Nh_positivity},
\begin{equation}
	h^d\sum_{j\in\mathbb{Z}^d}U_j^n(\mathcal{N}_hU^n)_j\geq0,
\end{equation}
and therefore
\begin{equation}
	h^d\sum_{j\in\mathbb{Z}^d}U_j^nD_\tau^\alpha U_j^n\leq0.
\end{equation}
Next, we use a discrete convexity property of the L1 operator. That is, for any real-valued sequence $\{v^n\}_{n\geq0}$, we have 
\begin{equation}
	2v^nD_\tau^\alpha v^n\geq D_\tau^\alpha\left((v^n)^2\right).
\end{equation}
Indeed, using the equivalent representation of the L1 formula,
\begin{equation}
	D_\tau^\alpha v^n=\frac{\tau^{-\alpha}}{\Gamma(2-\alpha)}\left(v^n-\sum_{k=0}^{n-2}(a_k-a_{k+1})v^{n-1-k}-a_{n-1}v^0\right),
\end{equation}
we obtain
\begin{equation}
	2v^nD_\tau^\alpha v^n-D_\tau^\alpha\left((v^n)^2\right)=\frac{\tau^{-\alpha}}{\Gamma(2-\alpha)}\left[\sum_{k=0}^{n-2}(a_k-a_{k+1})(v^n-v^{n-1-k})^2+a_{n-1}(v^n-v^0)^2\right]\geq0.
\end{equation}
Applying this inequality with $v^n=U_j^n$ and summing over $j$ gives
\begin{equation}
	D_\tau^\alpha\|U^n\|_{2,h}^2\leq2h^d\sum_{j\in\mathbb{Z}^d}U_j^nD_\tau^\alpha U_j^n\leq0.
\end{equation}

Rearranging the L1 operator, we, therefore, obtain
\begin{equation}
	\|U^n\|_{2,h}^2\leq\sum_{k=0}^{n-2}(a_k-a_{k+1})\|U^{n-1-k}\|_{2,h}^2+a_{n-1}\|U^0\|_{2,h}^2.
\end{equation}
We now argue by induction. For $n=1$,
\begin{equation}
	\|U^1\|_{2,h}^2\leq\|U^0\|_{2,h}^2.
\end{equation}
Assume that $\|U^r\|_{2,h}^2\leq\|U^0\|_{2,h}^2$ for $r=1,\ldots,n-1$. Then
\begin{equation}
	\|U^n\|_{2,h}^2\leq\left(\sum_{k=0}^{n-2}(a_k-a_{k+1})+a_{n-1}\right)\|U^0\|_{2,h}^2.
\end{equation}
Since
\begin{equation}
	\sum_{k=0}^{n-2}(a_k-a_{k+1})+a_{n-1}=1,
\end{equation}
we conclude that
\begin{equation}
	\|U^n\|_{2,h}^2\leq\|U^0\|_{2,h}^2.
\end{equation}
Taking the square root completes the proof.
\end{proof}

We next establish the existence of a non-negative solution to the implicit scheme.
\begin{theorem}[Existence of the discrete solution]
\label{thm:discrete_existence}

Let $U^0\in\ell^1(\mathbb{Z}^d)$ be non-negative. Assume that, for each fixed $h>0$, the weights $g_\gamma^i(h)$ are bounded and satisfy $g_\gamma^i(h)\rightarrow0$ as $|\gamma|\rightarrow\infty$. Then, for every $n=1,\ldots,N$, there exists a non-negative solution $U^n\in\ell^1(\mathbb{Z}^d)$ to \eqref{eq:discrete}.
\end{theorem}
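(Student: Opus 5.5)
We argue by induction on $n$. Assuming non-negative $U^0,\dots,U^{n-1}\in\ell^1(\mathbb{Z}^d)$ are already constructed, we rewrite the step exactly as in the proof of Lemma~\ref{infstab}:
\begin{equation}
	U+\lambda_\alpha\mathcal{N}_hU=W,\qquad W:=\sum_{k=0}^{n-2}(a_k-a_{k+1})U^{n-1-k}+a_{n-1}U^0 .
\end{equation}
Since $W$ is a convex combination of the previous levels, it is non-negative and satisfies $\|W\|_{1}\leq\max_r\|U^r\|_1$ and $\|W\|_\infty\leq\|U^0\|_\infty$. So it suffices to solve, for a given non-negative $W\in\ell^1$, the nonlinear resolvent problem above with $U\geq0$, $U\in\ell^1$.

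\textbf{Step 1: truncation to finite boxes.} For $R\in\mathbb{N}$ let $Q_R=\{|j|_\infty\leq R\}$. We pose the problem for $U$ supported in $Q_R$, with $U_j=0$ outside $Q_R$, and impose the equation only for $j\in Q_R$. We keep the conservative form by setting the fluxes through $\partial Q_R$ to zero. The pressure $V^i$ is still computed from the full sum $G_{h,i}U$, which is finite since $U$ has finite support and the weights are bounded. Mobilities are replaced by $(U_j^+)^{m-1}$, so the map is defined on all of $\mathbb{R}^{Q_R}$.

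\textbf{Step 2: existence on the box.} We use a Brouwer/Leray--Schauder argument on $K_R=\{U\in\mathbb{R}^{Q_R}:0\leq U_j\leq\|W\|_\infty\}$. Consider the homotopy $U+\theta\lambda_\alpha\mathcal{N}_h^RU=W$ with $\theta\in[0,1]$. Any solution satisfies a priori bounds:
\begin{itemize}
\item \emph{Non-negativity.} At a negative minimum $j$ the mobility at $j$ vanishes, so Lemma~\ref{lem:zero_minimum_sign} applies verbatim (the signs of the fluxes at $j$ use only $U_{j\pm e_i}^+\geq0$). This gives $(\mathcal{N}^R_hU)_j\leq0$, hence $U_j\geq W_j\geq0$, a contradiction.
\item \emph{Upper bound.} At a maximum the argument of Theorem~\ref{thm:discrete_maximum_principle} gives $\mathcal{N}_h^RU\geq0$, hence $U\leq\|W\|_\infty$.
\end{itemize}
Both remain valid for the truncated operator. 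The upper bound needs care because the cancellation $\sum_\beta c^i_\beta=0$ involves values outside $Q_R$, which are zero. Since those values are $\leq U_j$, the inequality $U_{j+\beta}-U_j\leq0$ still holds. Continuity of $\mathcal{N}^R_h$ on a finite-dimensional space together with these bounds gives a solution $U^{(R)}$ by the degree argument. Summing over $Q_R$ and using zero boundary fluxes yields $\sum_j U^{(R)}_j=\sum_{j\in Q_R}W_j\leq\|W\|_1$.

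\textbf{Step 3: passage $R\to\infty$ (main obstacle).} The family $U^{(R)}$ is bounded in $\ell^\infty$ and $\ell^1$, so a diagonal argument gives pointwise convergence $U^{(R)}\to U$, with $U\geq0$ and $\|U\|_1\leq\|W\|_1$ by Fatou. The difficulty is passing to the limit in the nonlocal pressure $V^i_{j+\frac12e_i}=\sum_k g^i_{k-j-\frac12e_i}U^{(R)}_k$. Pointwise convergence alone does not suffice; we need tightness of $U^{(R)}$. Here the hypothesis $g^i_\gamma\to0$ enters. Split the sum into $|k-j|\leq M$, which converges, and $|k-j|>M$, which is bounded by $\sup_{|\gamma|>M}|g^i_\gamma|\,\|W\|_1$ and is small uniformly in $R$. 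Hence $V^i$ converges pointwise and so does $\mathcal{N}_hU^{(R)}$ at each fixed $j$, since the flux depends on only finitely many values of $U$ besides $V$.

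Once $R$ exceeds $|j|+1$, the boundary modification no longer affects the equation at $j$. So the limit $U$ solves the scheme at every $j$, is non-negative, and lies in $\ell^1$. This closes the induction.
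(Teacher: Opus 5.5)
Your Steps~1 and~3 match the paper's proof: the resolvent reformulation, the zero-flux truncation with the pressure computed from the zero extension, and the diagonal limit in which the tail of $G_{h,i}$ is controlled by $\sup_{|\gamma|>M}|g^i_\gamma|$ times the uniform $\ell^1$ bound. The gap is in Step~2: the a~priori bound $U\leq\|W\|_\infty$ is false for the truncated operator.

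You checked that the zero extension does not spoil the cancellation $\sum_\beta c^i_\beta(h)=0$, but you did not check the effect of the zero boundary fluxes. If $j\in Q_R$ and $j+e_i\notin Q_R$, the untruncated flux is $F^{(i)}_{j+\frac12e_i}(\widetilde U)=-(V^i_{j+\frac12e_i})^-U_j^{m-1}\geq0$. If $j-e_i\notin Q_R$, it is $F^{(i)}_{j-\frac12e_i}(\widetilde U)=-(V^i_{j-\frac12e_i})^+U_j^{m-1}\leq0$. Setting these to zero gives $(\mathcal{N}_h^RU)_j\leq(\mathcal{N}_h\widetilde U)_j$ at boundary nodes. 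That is the wrong direction for transferring Theorem~\ref{thm:discrete_maximum_principle}: blocked outflow lets mass pile up at the edge of the box.

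This really happens. Take $d=1$, $R=1$, the semigroup weights (bounded, decaying, and $g_\gamma(h)>0$ for $\gamma>0$), and $W=w\mathbf{1}_{\{-1,0,1\}}$ with $w>0$.
\begin{itemize}
\item Every solution of $U+\theta\lambda_\alpha\mathcal{N}_h^RU=W$ has $\sum_jU_j=3w$, so $U\leq w$ forces $U\equiv w$.
\item For $U\equiv w$ one gets $V_{\frac12}=-g_{\frac32}(h)w<0$, $F_{\frac12}=g_{\frac32}(h)w^m>0$, and $(\mathcal{N}_h^RU)_1=-F_{\frac12}/h<0$.
\item So $U\equiv w$ is not a solution when $\theta>0$.
\end{itemize}
Hence for every $\theta\in(0,1]$ all solutions leave $K_R$, and homotopy invariance on $K_R$ cannot be applied.

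There is a second defect. $K_R$ is closed, and the $\theta=0$ solution $U=W$ lies on $\partial K_R$ wherever $W_j=0$ or $W_j=\|W\|_\infty$. So the degree at $\theta=0$ is not even defined. This is exactly why the paper replaces the data by $H^{R,\varepsilon}$, which lies in the interior of its simplex.

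Both defects are fixed by using conservation instead of the maximum principle.
\begin{itemize}
\item Your positivity argument works at every node with $U_j<0$, boundary nodes included, so every solution of the homotopy is non-negative.
\item Summing the equation over $Q_R$ with zero boundary fluxes gives $\sum_{Q_R}U_j=S_R:=\sum_{Q_R}W_j$ for every $\theta$.
\item All zeros therefore lie in $[0,S_R]^{Q_R}$, which is compactly contained in the open box $\Omega_R=(-1,S_R+1)^{Q_R}$.
\item Your extension $(U_j^+)^{m-1}$ makes the map continuous on all of $\mathbb{R}^{Q_R}$. The degree of $U\mapsto U-W$ on $\Omega_R$ at $0$ is $1$, so homotopy invariance yields a non-negative solution with mass $S_R\leq\|W\|_1$.
\end{itemize}
That is all Step~3 needs, since $\|U^{(R)}\|_\infty\leq\|U^{(R)}\|_1$ supplies the pointwise bound.

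With this correction your route is valid and somewhat simpler than the paper's. Extending the mobility to negative arguments lets you work on an open neighbourhood of the positive cone. The paper instead stays on the simplex $\{U\geq0,\ \sum_jU_j=S_R\}$, which forces the perturbation $H^{R,\varepsilon}$, a degree computation on an affine hyperplane, and an extra compactness limit $\varepsilon\to0$.
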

\begin{proof}
We proceed by induction in time. Suppose that $U^0,\ldots,U^{n-1}\in\ell^1(\mathbb{Z}^d)$ are non-negative. Define
\begin{equation}
	H_j^n=\sum_{k=1}^{n-1}(a_{k-1}-a_k)U_j^{n-k}+a_{n-1}U_j^0.
\end{equation}
Since the coefficients $a_{k-1}-a_k$ and $a_{n-1}$ are non-negative, we have $H^n\geq0$. Moreover, $H^n\in\ell^1(\mathbb{Z}^d)$.  Using $\lambda_\alpha=\Gamma(2-\alpha)\tau^\alpha$, the equation at time $t_n$ can be written as the nonlinear resolvent problem as follows
\begin{equation}
	U_j+\lambda_\alpha(\mathcal{N}_hU)_j=H_j^n, \quad j\in\mathbb{Z}^d.
\end{equation}
Thus, it remains to prove that for every non-negative $H\in\ell^1(\mathbb{Z}^d)$ the equation
\begin{equation}
	U+\lambda_\alpha\mathcal{N}_hU=H
\end{equation}
admits a non-negative solution $U\in\ell^1(\mathbb{Z}^d)$. For $R\in\mathbb{N}$, let
\begin{equation}
	\Lambda_R=\{j\in\mathbb{Z}^d:|j|_\infty\leq R\}.
\end{equation}
Define the truncated right-hand side by
\begin{equation}
	H_j^R=H_j\mathbf{1}_{\Lambda_R}(j).
\end{equation}
For a vector $U=\{U_j\}_{j\in\Lambda_R}$, let $\widetilde U$ denote its extension by zero outside $\Lambda_R$.

Now, we define $\mathcal{N}_{h,R}$ using the discrete pressure gradient of $\widetilde U$ at the interior interfaces and imposing a zero numerical flux through the boundary of $\Lambda_R$. More precisely, all fluxes connecting two nodes of $\Lambda_R$ are given by the same upwind formula as for $\mathcal{N}_h$, while fluxes crossing the boundary of $\Lambda_R$ are set equal to zero. Consequently,
\begin{equation}
	\sum_{j\in\Lambda_R}(\mathcal{N}_{h,R}U)_j=0.
\end{equation}
We first solve the finite-dimensional problem
\begin{equation}
	U_j^R+\lambda_\alpha(\mathcal{N}_{h,R}U^R)_j=H_j^R, \quad j\in\Lambda_R.
	\label{eq::trunceq}
\end{equation}
Since $\Lambda_R$ is finite, the discrete pressure gradient of $\widetilde U$ at every relevant interface is a finite sum. Hence $\mathcal{N}_{h,R}$ is continuous on the non-negative cone of $\mathbb{R}^{M_R}$, where $M_R=|\Lambda_R|$. Let
\begin{equation}
	S_R=\sum_{j\in\Lambda_R}H_j^R.
\end{equation}
If $S_R=0$, then $H^R=0$ and $U^R=0$ is a solution. We therefore assume $S_R>0$ and introduce the simplex
\begin{equation}
	K_R=\left\{U\in\mathbb{R}^{M_R}:U_j\geq0,\ \sum_{j\in\Lambda_R}U_j=S_R\right\}.
\end{equation}
Its relative interior in the affine hyperplane
\begin{equation}
	X_R=\left\{U\in\mathbb{R}^{M_R}:\sum_{j\in\Lambda_R}U_j=S_R\right\}
\end{equation}
is
\begin{equation}
	K_R^{\mathrm{int}}=\left\{U\in X_R:U_j>0\ \text{for all }j\in\Lambda_R\right\}.
\end{equation}
To avoid possible zeros of $H^R$ on the boundary of the simplex, define
\begin{equation}
	\overline H_j^R=\frac{S_R}{M_R}
\end{equation}
and, for $0<\varepsilon<1$,
\begin{equation}
	H^{R,\varepsilon}=(1-\varepsilon)H^R+\varepsilon\overline H^R.
\end{equation}
Then $H_j^{R,\varepsilon}>0$ for every $j\in\Lambda_R$ and
\begin{equation}
	\sum_{j\in\Lambda_R}H_j^{R,\varepsilon}=S_R.
\end{equation}
In particular, $H^{R,\varepsilon}\in K_R^{\mathrm{int}}$.
For $s\in[0,1]$, define
\begin{equation}
	\Phi_s(U)=U-H^{R,\varepsilon}+s\lambda_\alpha\mathcal{N}_{h,R}(U).
\end{equation}
Since $\sum_jU_j=S_R$, $\sum_jH_j^{R,\varepsilon}=S_R$ and $\sum_j(\mathcal{N}_{h,R}U)_j=0$, we have
\begin{equation}
	\sum_{j\in\Lambda_R}(\Phi_s(U))_j=0.
\end{equation}
Thus, $\Phi_s$ takes values in the following set
\begin{equation}
	Y_R=\left\{V\in\mathbb{R}^{M_R}:\sum_{j\in\Lambda_R}V_j=0\right\}.
\end{equation}
We claim that $\Phi_s$ has no zero on $\partial K_R$. Indeed, if $U\in\partial K_R$, then $U_{j_0}=0$ for some $j_0\in\Lambda_R$. By Lemma~\ref{lem:zero_minimum_sign}, applied to the truncated operator,
\begin{equation}
	(\mathcal{N}_{h,R}U)_{j_0}\leq0.
\end{equation}
Therefore,
\begin{equation}
	(\Phi_s(U))_{j_0}=-H_{j_0}^{R,\varepsilon}+s\lambda_\alpha(\mathcal{N}_{h,R}U)_{j_0}<0.
\end{equation}
Hence $\Phi_s(U)\neq0$ for every $U\in\partial K_R$ and every $s\in[0,1]$. At $s=0$, equation $\Phi_0(U)=0$ has the unique solution $U=H^{R,\varepsilon}\in K_R^{\mathrm{int}}$. After identifying the affine space $X_R$ with the vector space $Y_R$ by translation, the corresponding Brouwer degree is
\begin{equation}
	\deg(\Phi_0,K_R^{\mathrm{int}},0)=1.
\end{equation}
By homotopy invariance of the degree,
\begin{equation}
	\deg(\Phi_1,K_R^{\mathrm{int}},0)=1.
\end{equation}
Therefore, there exists $U^{R,\varepsilon}\in K_R^{\mathrm{int}}$ satisfying
\begin{equation}
	U^{R,\varepsilon}+\lambda_\alpha\mathcal{N}_{h,R}(U^{R,\varepsilon})=H^{R,\varepsilon}.
\end{equation}
Since $K_R$ is compact, there exist a sequence $\varepsilon_\ell\rightarrow0$ and a vector $U^R\in K_R$ such that $U^{R,\varepsilon_\ell}\rightarrow U^R$. By the continuity of $\mathcal{N}_{h,R}$, passing to the limit gives
\begin{equation}
	U^R+\lambda_\alpha\mathcal{N}_{h,R}(U^R)=H^R,
\end{equation}
and, moreover,
\begin{equation}
	U_j^R\geq0, \quad \sum_{j\in\Lambda_R}U_j^R=S_R.
\end{equation}
We now pass to the limit $R\rightarrow\infty$. Set
\begin{equation}
	S=\sum_{j\in\mathbb{Z}^d}H_j.
\end{equation}
Since $H\geq0$, we have $S_R\leq S$ and extending $U^R$ by zero outside $\Lambda_R$, we obtain $0\leq U_j^R\leq S$ for every $j$ and $R$. Hence, by a diagonal argument, there exist a sequence $R_\ell\rightarrow\infty$ and a non-negative sequence $U=\{U_j\}_{j\in\mathbb{Z}^d}$ such that
\begin{equation}
	U_j^{R_\ell}\rightarrow U_j
\end{equation}
for every fixed $j\in\mathbb{Z}^d$. Furthermore, for every finite set $K\subset\mathbb{Z}^d$, we have
\begin{equation}
	\sum_{j\in K}U_j=\lim_{\ell\rightarrow\infty}\sum_{j\in K}U_j^{R_\ell}\leq S.
\end{equation}
Taking the supremum over all finite subsets $K$ gives
\begin{equation}
	\sum_{j\in\mathbb{Z}^d}U_j\leq S.
\end{equation}
Thus $U\in\ell^1(\mathbb{Z}^d)$. 

It remains to pass to the limit in the nonlocal operator. Fix $i\in\{1,\ldots,d\}$ and an interface $x_{j+\frac{1}{2}e_i}$. Then
\begin{equation}
	(G_{h,i}U^{R_\ell})_{j+\frac{1}{2}e_i}-(G_{h,i}U)_{j+\frac{1}{2}e_i}=\sum_{k\in\mathbb{Z}^d}g^i_{k-j-\frac{1}{2}e_i}(h)(U_k^{R_\ell}-U_k).
\end{equation}
Let $L>0$. In the finite set $|k|\leq L$, the pointwise convergence gives
\begin{equation}
	\sum_{|k|\leq L}g^i_{k-j-\frac{1}{2}e_i}(h)(U_k^{R_\ell}-U_k)\rightarrow0
\end{equation}
as $\ell\rightarrow\infty$. For the tails,
\begin{equation}
	\left|\sum_{|k|>L}g^i_{k-j-\frac{1}{2}e_i}(h)U_k^{R_\ell}\right|\leq S\sup_{|k|>L}\left|g^i_{k-j-\frac{1}{2}e_i}(h)\right|,
\end{equation}
and similarly
\begin{equation}
	\left|\sum_{|k|>L}g^i_{k-j-\frac{1}{2}e_i}(h)U_k\right|\leq S\sup_{|k|>L}\left|g^i_{k-j-\frac{1}{2}e_i}(h)\right|.
\end{equation}
Since $g_\gamma^i(h)\rightarrow0$ as $|\gamma|\rightarrow\infty$, first letting $\ell\rightarrow\infty$ and then $L\rightarrow\infty$ yields
\begin{equation}
	(G_{h,i}U^{R_\ell})_{j+\frac{1}{2}e_i}\rightarrow(G_{h,i}U)_{j+\frac{1}{2}e_i}.
\end{equation}
Fix now $j\in\mathbb{Z}^d$ and notice that for sufficiently large $\ell$, the nodes $j$ and $j\pm e_i$ lie in the interior of $\Lambda_{R_\ell}$. We have $m>1$ and hence, pointwise convergence implies $(U_j^{R_\ell})^{m-1}\rightarrow U_j^{m-1}$. Together with the convergence of the discrete pressure gradient and the continuity of the positive and negative parts, this yields
\begin{equation}
	F_{j\pm\frac{1}{2}e_i}^{(i)}(U^{R_\ell})\rightarrow F_{j\pm\frac{1}{2}e_i}^{(i)}(U).
\end{equation}
Note that it then follows that
\begin{equation}
	(\mathcal{N}_{h,R_\ell}U^{R_\ell})_j\rightarrow(\mathcal{N}_hU)_j.
\end{equation}
For every fixed $j$, we also have $H_j^{R_\ell}=H_j$ for all sufficiently large $\ell$. Passing to the limit in the truncated resolvent equation therefore yields
\begin{equation}
	U_j+\lambda_\alpha(\mathcal{N}_hU)_j=H_j.
\end{equation}
Thus, the resolvent problem has a non-negative solution $U\in\ell^1(\mathbb{Z}^d)$.

Applying this argument successively with $H=H^n$ proves the existence of a non-negative $\ell^1$ solution $U^n$ at every time level $n=1,\ldots,N$.
\end{proof}

We next derive an energy-dissipation estimate for the implicit scheme. It can be viewed as a stronger version of the $\ell^2$ stability estimate. To start, for the grid functions $W=\{W_j\}_{j\in\mathbb{Z}^d}$ and $Z=\{Z_j\}_{j\in\mathbb{Z}^d}$, we introduce the inner product
\begin{equation}
	\langle W,Z\rangle_h=h^d\sum_{j\in\mathbb{Z}^d}W_jZ_j
\end{equation}
and the corresponding norm
\begin{equation}
	\|W\|_{2,h}^2=\langle W,W\rangle_h=h^d\sum_{j\in\mathbb{Z}^d}|W_j|^2.
\end{equation}
Next, for a non-negative grid function $U$, define the discrete dissipation
\begin{equation}
	\mathcal{D}_h(U)=\frac{h^d}{2}\sum_{j\in\mathbb{Z}^d}\sum_{\beta\neq0}C_\beta(h)\left(U_j^{\frac{m+1}{2}}-U_{j+\beta}^{\frac{m+1}{2}}\right)^2.
\end{equation}
We are now ready to proceed with the dissipation lemma. 

\begin{lemma}[Discrete energy-dissipation estimate]
\label{lem:energy_dissipation}
Let $U^n$ be a bounded non-negative solution to \eqref{eq:discrete}. Assume that all sums appearing below are absolutely convergent. Then
\begin{equation}
	\frac{1}{2}D_\tau^\alpha\|U^n\|_{2,h}^2+\frac{4}{(m+1)^2}\mathcal{D}_h(U^n)\leq0.
\end{equation}
In particular,
\begin{equation}
	D_\tau^\alpha\|U^n\|_{2,h}^2\leq0,
\end{equation}
hence, the $\ell^2$ stability estimate of Lemma~\ref{lem:l2_stability} follows.
\end{lemma}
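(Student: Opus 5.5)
We test \eqref{eq:discrete} against $U^n$ and sharpen the bound of Lemma~\ref{lem:Nh_positivity} into a quantitative one. Multiplying \eqref{eq:discrete} by $h^dU_j^n$, summing over $j$, and using the convexity inequality $2v^nD_\tau^\alpha v^n\geq D_\tau^\alpha((v^n)^2)$ from the proof of Lemma~\ref{lem:l2_stability}, we obtain
\begin{equation}
	\frac12 D_\tau^\alpha\|U^n\|_{2,h}^2\leq\langle U^n,D_\tau^\alpha U^n\rangle_h=-h^d\sum_{j\in\mathbb{Z}^d}U_j^n(\mathcal{N}_hU^n)_j.
\end{equation}
So it suffices to prove the lower bound $h^d\sum_jU_j(\mathcal{N}_hU)_j\geq\frac{4}{(m+1)^2}\mathcal{D}_h(U)$ for a bounded non-negative $U$.

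For this we repeat the proof of Lemma~\ref{lem:Nh_positivity} verbatim. The upwind flux comparison and summation by parts give
\begin{equation}
	\sum_jU_j(\mathcal{N}_hU)_j\geq\sum_j\Phi(U_j)(L_hU)_j,\qquad \Phi(s)=\frac{s^m}{m}.
\end{equation}
The discrete Stroock--Varopoulos identity (Lemma~\ref{lem:discrete_sv}) then rewrites the right-hand side as
\begin{equation}
	\frac12\sum_j\sum_{\beta\neq0}C_\beta(h)\bigl(\Phi(U_j)-\Phi(U_{j+\beta})\bigr)(U_j-U_{j+\beta}).
\end{equation}
Note that we use the identity itself, not merely its sign.

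The remaining ingredient is the scalar inequality
\begin{equation}
	(\Phi(a)-\Phi(b))(a-b)\geq\frac{4}{(m+1)^2}\Bigl(a^{\frac{m+1}{2}}-b^{\frac{m+1}{2}}\Bigr)^2,\qquad a,b\geq0.
\end{equation}
To prove it, write both differences as integrals: $\Phi(a)-\Phi(b)=\int_b^a s^{m-1}\,ds$ and $a-b=\int_b^a 1\,ds$. Also,
\begin{equation}
	a^{\frac{m+1}{2}}-b^{\frac{m+1}{2}}=\frac{m+1}{2}\int_b^a s^{\frac{m-1}{2}}\,ds.
\end{equation}
Cauchy--Schwarz gives
\begin{equation}
	\Bigl(\int_b^a s^{\frac{m-1}{2}}\cdot1\,ds\Bigr)^2\leq\int_b^a s^{m-1}\,ds\int_b^a1\,ds,
\end{equation}
which is exactly the claim, and it holds symmetrically when $a<b$.

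Multiplying by $C_\beta(h)\geq0$, summing, and multiplying by $h^d$ yields $h^d\sum_jU_j(\mathcal{N}_hU)_j\geq\frac{4}{(m+1)^2}\mathcal{D}_h(U)$. Combined with the first display, this gives the stated estimate. Since $\mathcal{D}_h\geq0$, it also gives $D_\tau^\alpha\|U^n\|_{2,h}^2\leq0$, and the induction of Lemma~\ref{lem:l2_stability} then recovers $\ell^2$ stability.

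The main obstacle I expect is bookkeeping rather than any conceptual step. The summation by parts and the reindexing $j\mapsto j+\beta$ must be justified, and this is covered by the standing absolute-convergence assumption. Care is also needed to keep the Stroock--Varopoulos step as an equality, so that the constant $4/(m+1)^2$ is not lost.
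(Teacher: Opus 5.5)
Your proposal is correct and follows the paper's proof essentially step for step. You test with $h^dU_j^n$, apply the L1 convexity inequality, use the upwind comparison from Lemma~\ref{lem:Nh_positivity} to reduce to $\langle\Phi(U^n),L_hU^n\rangle_h$, take its symmetric representation, and finish with the two-point Stroock--Varopoulos inequality; the only difference is that you prove that two-point inequality via Cauchy--Schwarz on $\int_b^a s^{\frac{m-1}{2}}\,ds$, whereas the paper simply cites it.
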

\begin{proof}
We start by multiplying \eqref{eq:discrete} by $h^dU_j^n$ and summing over $j\in\mathbb{Z}^d$. It follows that
\begin{equation}
	\left\langle U^n,D_\tau^\alpha U^n\right\rangle_h=-\left\langle U^n,\mathcal{N}_hU^n\right\rangle_h.
\end{equation}
Furthermore, by the discrete convexity inequality for the L1 operator, we have
\begin{equation}
	\frac{1}{2}D_\tau^\alpha\|U^n\|_{2,h}^2\leq\left\langle U^n,D_\tau^\alpha U^n\right\rangle_h,
\end{equation}
and, therefore,
\begin{equation}
	\frac{1}{2}D_\tau^\alpha\|U^n\|_{2,h}^2\leq-\left\langle U^n,\mathcal{N}_hU^n\right\rangle_h.
\end{equation}
Next, proceeding as in the proof of Lemma~\ref{lem:Nh_positivity}, with $\Phi(s)=s^m/m$, we have
\begin{equation}
	\left\langle U^n,\mathcal{N}_hU^n\right\rangle_h\geq\left\langle \Phi(U^n),L_hU^n\right\rangle_h,
\end{equation}
and by the symmetric representation of $L_h$, we obtain
\begin{equation}
	\left\langle \Phi(U^n),L_hU^n\right\rangle_h=\frac{h^d}{2}\sum_{j\in\mathbb{Z}^d}\sum_{\beta\neq0}C_\beta(h)\left(\Phi(U_j^n)-\Phi(U_{j+\beta}^n)\right)\left(U_j^n-U_{j+\beta}^n\right).
\end{equation}
Now, recall the two-point Stroock–Varopoulos inequality \cite{Stroock1984LargeDeviations, Varopoulos1985HardyLittlewood}, that is for any $a,b\geq0$
\begin{equation}
	(a^m-b^m)(a-b)\geq\frac{4m}{(m+1)^2}\left(a^{\frac{m+1}{2}}-b^{\frac{m+1}{2}}\right)^2,
\end{equation}
which can be rewritten in terms of $\Phi(s)=s^m/m$ as
\begin{equation}
	\left(\Phi(a)-\Phi(b)\right)(a-b)\geq\frac{4}{(m+1)^2}\left(a^{\frac{m+1}{2}}-b^{\frac{m+1}{2}}\right)^2.
\end{equation}
Consequently,
\begin{equation}
	\left\langle \Phi(U^n),L_hU^n\right\rangle_h\geq\frac{4}{(m+1)^2}\mathcal{D}_h(U^n),
\end{equation}
 combining it with the preceding estimates yields
\begin{equation}
	\frac{1}{2}D_\tau^\alpha\|U^n\|_{2,h}^2+\frac{4}{(m+1)^2}\mathcal{D}_h(U^n)\leq0,
\end{equation}
which completes the proof.
\end{proof}

\begin{remark}
Note that the quantity $\mathcal{D}_h(U)$ is the discrete analogue of the fractional energy associated with $U^{(m+1)/2}$. In particular, when the coefficients $C_\beta(h)$ are chosen to approximate the kernel of $(-\Delta)^{1-\sigma}$, the dissipation \textit{formally} corresponds to the $H^{1-\sigma}$ seminorm of $U^{(m+1)/2}$.
\end{remark}

\subsection{Consistency of the implicit scheme}

We now turn to the convergence analysis of the implicit scheme. In the sequel, we assume that the main problem \eqref{fpe} has a non-negative solution $u$ on $[0,T]$ that is sufficiently smooth in space and regular in time for $t>0$. The temporal assumptions stated below allow a weak singularity at $t=0$. Here, we are only concerned with the properties of the numerical scheme. Questions about existence and uniqueness of solutions to the continuous problem are not considered here and are left for future work. 

We start by analyzing the consistency of the scheme. For notation, define
\begin{equation}
	u_j^n=u(x_j,t_n), \quad e_j^n=U_j^n-u_j^n, \quad n=0,\ldots,N,
\end{equation}
and also let
\begin{equation}
	\mathcal{N}u=-\nabla\cdot\left(u^{m-1}\nabla(-\Delta)^{-\sigma}u\right)
\end{equation}
denote the continuous spatial operator. Then, equation \eqref{fpe} can be written as
\begin{equation}
	\partial_t^\alpha u+\mathcal{N}u=0.
\end{equation}
In a standard way, we decompose the truncation error into its temporal and spatial parts. Define
\begin{equation}
	R_{\tau,j}^n=D_\tau^\alpha u_j^n-\partial_t^\alpha u(x_j,t_n)
\end{equation}
and
\begin{equation}
	R_{h,j}^n=(\mathcal{N}_hu^n)_j-(\mathcal{N}u)(x_j,t_n).
\end{equation}
Notice that, since $u$ satisfies \eqref{fpe}, we obtain the exact identity
\begin{equation}
	D_\tau^\alpha u_j^n+(\mathcal{N}_hu^n)_j=R_j^n,
\end{equation}
where the total remainder is
\begin{equation}
	R_j^n=R_{\tau,j}^n+R_{h,j}^n.
\end{equation}
Proceeding further, we assume that the temporal and spatial discretizations are consistent. For the temporal part, we use a weighted estimate. The convergence argument below will use its counterpart for the cumulative solution. That is, there exist functions $\eta_\tau=\eta_\tau(\tau)$ and $\eta_h=\eta_h(h)$ satisfying $\eta_\tau\rightarrow0$ as $\tau\rightarrow0$ and $\eta_h\rightarrow0$ as $h\rightarrow0$, such that
\begin{equation}
	t_n^\alpha\|R_\tau^n\|_{2,h}\leq C_u\eta_\tau, \quad \|R_h^n\|_{2,h}\leq C_u\eta_h
\end{equation}
for $n=1,\ldots,N$. In addition, we take the convention that $C_u$ may depend on $T$, $\alpha$, $m$, $\sigma$ and sufficiently high norms of the exact solution, but is independent of $h$ and $\tau$. Therefore,
\begin{equation}\label{eq:consistency_error}
	\|R^n\|_{2,h}\leq C_u(\eta_\tau t_n^{-\alpha}+\eta_h).
\end{equation}
We now move to the derivation of a consistency estimate for the complete nonlinear spatial operator. This requires not only consistency of the discrete pressure gradient, but also consistency of one discrete derivative of its approximation. For a fixed time $t$, let
\begin{equation}
	A(x)=u(x,t)^{m-1}, \quad v_i(x)=\left(\nabla(-\Delta)^{-\sigma}u(x,t)\right)\cdot e_i.
\end{equation}
The continuous flux in the $i$-th direction is
\begin{equation}
	q_i(x)=-A(x)v_i(x),
\end{equation}
so that
\begin{equation}
	\mathcal{N}u=\sum_{i=1}^d\partial_{x_i}q_i.
\end{equation}
Also, for a grid function defined at the interfaces, we introduce a finite difference and a norm
\begin{equation}
	(\delta_i Z)_j=\frac{Z_{j+\frac{1}{2}e_i}-Z_{j-\frac{1}{2}e_i}}{h}, \quad \|Z\|_{2,h,i}^2=h^d\sum_{j\in\mathbb{Z}^d}\left|Z_{j+\frac{1}{2}e_i}\right|^2.
\end{equation}

\begin{proposition}[Consistency of the nonlinear spatial operator]
\label{prop:spatial_consistency}
We assume that the exact solution is sufficiently smooth and decays sufficiently fast at infinity. In particular, assume that $A=u^{m-1}$, $v_i$ and $q_i=-Av_i$ have all spatial derivatives required below uniformly bounded on $[0,T]$ in the corresponding $L^\infty$ and $L^2$ norms. Now, let
\begin{equation}
	V_{h,j+\frac{1}{2}e_i}^i=(G_{h,i}u)_{j+\frac{1}{2}e_i}
\end{equation}
and define the pressure-gradient error
\begin{equation}
	E_{j+\frac{1}{2}e_i}^i=V_{h,j+\frac{1}{2}e_i}^i-v_i(x_{j+\frac{1}{2}e_i}).
\end{equation}
Assume that there exists $\eta_G(h)\rightarrow0$ as $h\rightarrow0$ such that
\begin{equation}
	\|E^i\|_{2,h,i}+\|\delta_iE^i\|_{2,h}\leq C_u\eta_G(h),
	\label{eq:gradient_consistency}
\end{equation}
for $i=1,\ldots,d$, and that
\begin{equation}
	\|V_h^i\|_\infty\leq C_u.
\end{equation}
Then
\begin{equation}
	\|\mathcal{N}_hu-\mathcal{N}u\|_{2,h}\leq C_u\left(h+\eta_G(h)\right).
	\label{eq:full_spatial_consistency}
\end{equation}
\end{proposition}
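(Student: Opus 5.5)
Since both operators are in divergence form, we compare the numerical flux with the continuous flux at the interfaces and then take one discrete difference. Write the discrete flux evaluated on the exact solution as $F^{(i)}_{j+\frac12 e_i}(u)$. Then
\begin{equation}
(\mathcal{N}_hu)_j-(\mathcal{N}u)(x_j)=\sum_{i=1}^d\Bigl[(\delta_i F^{(i)}(u))_j-(\delta_i q_i)_j\Bigr]+\sum_{i=1}^d\Bigl[(\delta_i q_i)_j-\partial_{x_i}q_i(x_j)\Bigr],
\end{equation}
where $q_i$ is sampled at the interfaces $x_{j\pm\frac12 e_i}$. The second bracket is the centered difference of a smooth function. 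By Taylor's theorem it is $O(h^2\|\partial_{x_i}^3q_i\|)$ pointwise, and after the usual integral form of the remainder it is also $O(h^2)$ in $\|\cdot\|_{2,h}$. This is better than needed. The remaining work is to show $\|\delta_i(F^{(i)}(u)-q_i)\|_{2,h}\le C_u(h+\eta_G)$.

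First I would split the upwind flux into a centered part and a correction. Using $r^+=\tfrac12(r+|r|)$ and $r^-=\tfrac12(r-|r|)$,
\begin{equation}
F^{(i)}_{j+\frac12e_i}(u)=-V^i_{h,j+\frac12e_i}\,\bar A_{j+\frac12e_i}-\tfrac12\bigl|V^i_{h,j+\frac12e_i}\bigr|\,(A_{j+e_i}-A_j),\qquad \bar A_{j+\frac12e_i}=\tfrac12(A_j+A_{j+e_i}).
\end{equation}
Then I would write $V_h^i=v_i+E^i$ and $\bar A=A(x_{j+\frac12e_i})+O(h^2)$. This gives
\begin{equation}
F^{(i)}-q_i=-E^i\bar A-v_i\bigl(\bar A-A(x_{\cdot+\frac12e_i})\bigr)-\tfrac12|V_h^i|\,h\,(D^+_iA).
\end{equation}
Here $D_i^+A$ is the forward difference quotient, which is centered at the interface.

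Next I apply $\delta_i$ to each term using the discrete product rule $\delta_i(ab)=(\delta_i a)\,\tilde b+\tilde a\,(\delta_i b)$, where the tilde denotes shifted values.
\begin{itemize}
\item For the first term, $\|\delta_i(E^i\bar A)\|_{2,h}\le \|\delta_iE^i\|_{2,h}\|A\|_\infty+\|E^i\|_{2,h,i}\|\partial_iA\|_\infty$. By \eqref{eq:gradient_consistency} this is $C_u\eta_G$.
\item The second term is a smooth $O(h^2)$ function whose difference quotient is $O(h^2)$ in $L^2$. This uses the assumed derivatives of $A$ and $v_i$.
\item The third term, the numerical viscosity, is the main obstacle, because $|\cdot|$ is only Lipschitz. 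I write $\tfrac h2\,\delta_i\bigl(|V_h^i|D_i^+A\bigr)=\tfrac h2\bigl[|V_h^i|\,\delta_iD^+_iA+(\delta_i|V_h^i|)\,\widetilde{D^+_iA}\bigr]$. The first piece is bounded by $h\|V_h^i\|_\infty\|\partial_i^2A\|_{L^2}\le C_uh$, using the assumed bound $\|V_h^i\|_\infty\le C_u$.
\item For the second piece I use $\bigl|\delta_i|V_h^i|\bigr|\le|\delta_iV_h^i|\le|\delta_iE^i|+|\delta_iv_i|$. This gives $h\,\|\partial_iA\|_\infty(\|\delta_iE^i\|_{2,h}+\|\partial_iv_i\|_{L^2})\le C_u h(\eta_G+1)$, which is $\le C_u h$ for small $h$.
\end{itemize}
The point to watch is exactly this: the Lipschitz bound on the absolute value lets us avoid differentiating $|v_i|$, and the whole correction stays $O(h)$. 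It never needs to be $o(h)$, which explains why the estimate is first order.

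Finally, I sum over $i$ and use the triangle inequality. Throughout, sampled $L^2$ norms of smooth decaying functions are bounded by continuous Sobolev norms uniformly in $h$, which follows from the standard $h^d\sum_j|f(x_j)|^2\le C\|f\|_{H^{s}}^2$ with $s>d/2$. This yields \eqref{eq:full_spatial_consistency}.
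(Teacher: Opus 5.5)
Your proposal is correct and follows the paper's proof essentially step by step. It uses the same splitting of the upwind flux into $-V_h^i\overline{A}$ plus the viscosity term $-\frac{h}{2}|V_h^i|D_i^+A$, the same three-term decomposition of $F^{(i)}(u)-q_i$ handled by the discrete product rule, the Lipschitz bound $\bigl|\delta_i|V_h^i|\bigr|\le|\delta_iV_h^i|$ for the viscosity term, and the $O(h^2)$ centered-difference comparison for the exact flux; your explicit expansion of the viscosity term and the remark relating sampled $\ell^2$ norms to Sobolev norms only fill in details the paper leaves implicit.
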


\begin{proof}
Start by fixing $i\in\{1,\ldots,d\}$ and putting
\begin{equation}
	A_j=A(x_j), \quad \overline{A}_{j+\frac{1}{2}e_i}=\frac{A_{j+e_i}+A_j}{2}, \quad D_i^+A_j=\frac{A_{j+e_i}-A_j}{h}.
\end{equation}
By our notation, $r^+=(r+|r|)/2$ and $r^-=(r-|r|)/2$, the upwind flux can be written compactly as
\begin{equation}
	F_{j+\frac{1}{2}e_i}^{(i)}(u)=-V_{h,j+\frac{1}{2}e_i}^i\overline{A}_{j+\frac{1}{2}e_i}-\frac{h}{2}\left|V_{h,j+\frac{1}{2}e_i}^i\right|D_i^+A_j.
	\label{eq:upwind_flux_decomposition}
\end{equation}
Next, let $x_{j+\frac{1}{2}e_i}=x_j+\frac{h}{2}e_i$ and write
\begin{equation}
	A_{j+\frac{1}{2}e_i}=A(x_{j+\frac{1}{2}e_i}), \quad v_{j+\frac{1}{2}e_i}^i=v_i(x_{j+\frac{1}{2}e_i}).
\end{equation}
Notice that since
\begin{equation}
	q_i(x_{j+\frac{1}{2}e_i})=-v_{j+\frac{1}{2}e_i}^iA_{j+\frac{1}{2}e_i},
\end{equation}
we have from \eqref{eq:upwind_flux_decomposition} the following
\begin{equation}
	F_{j+\frac{1}{2}e_i}^{(i)}(u)-q_i(x_{j+\frac{1}{2}e_i})=-E_{j+\frac{1}{2}e_i}^i\overline{A}_{j+\frac{1}{2}e_i}-v_{j+\frac{1}{2}e_i}^i\left(\overline{A}_{j+\frac{1}{2}e_i}-A_{j+\frac{1}{2}e_i}\right)-\frac{h}{2}\left|V_{h,j+\frac{1}{2}e_i}^i\right|D_i^+A_j.
	\label{eq:flux_consistency_decomposition}
\end{equation}
Now, we estimate each of the three left-hand side terms separately. For the first term, the discrete product rule and the smoothness of $A$ give
\begin{equation}
	\left\|\delta_i\left(E^i\overline{A}\right)\right\|_{2,h}\leq C_u\left(\|\delta_iE^i\|_{2,h}+\|E^i\|_{2,h,i}\right)\leq C_u\eta_G(h).
\end{equation}
For the second term, the midpoint symmetry gives
\begin{equation}
	\overline{A}_{j+\frac{1}{2}e_i}-A_{j+\frac{1}{2}e_i}=O(h^2).
\end{equation}
Also, the same Taylor expansion applied after taking one discrete difference yields
\begin{equation}
	\left\|\delta_i\left[v_i\left(\overline{A}-A_{\mathrm{mid}}\right)\right]\right\|_{2,h}\leq C_uh^2.
\end{equation}
Next, for the upwind contribution, we have
\begin{equation}
	\frac{h}{2}\left\|\delta_i\left(|V_h^i|D_i^+A\right)\right\|_{2,h}\leq C_uh.
\end{equation}
To see this, note that the function $r\mapsto|r|$ is Lipschitz, and hence
\begin{equation}
	\left|\delta_i|V_h^i|\right|\leq|\delta_iV_h^i|.
\end{equation}
Moreover,
\begin{equation}
	\delta_iV_h^i=\delta_iv_i+\delta_iE^i,
\end{equation}
hence, the required bound follows from the regularity of $v_i$, \eqref{eq:gradient_consistency}, the uniform bound on $V_h^i$ and the smoothness of $A$. Combining these estimates with \eqref{eq:flux_consistency_decomposition}, we obtain
\begin{equation}
	\left\|\delta_iF^{(i)}(u)-\delta_iq_i\right\|_{2,h}\leq C_u\left(h+\eta_G(h)\right).
\end{equation}
It only remains to compare the centered discrete divergence of the exact flux with its continuous divergence. By Taylor expansion, we can write
\begin{equation}
	\left\|\frac{q_i(x_{j+\frac{1}{2}e_i})-q_i(x_{j-\frac{1}{2}e_i})}{h}-\partial_{x_i}q_i(x_j)\right\|_{2,h}\leq C_uh^2.
\end{equation}
Next, summing over $i=1,\ldots,d$ gives
\begin{equation}
	\|\mathcal{N}_hu-\mathcal{N}u\|_{2,h}\leq C_u\left(h+\eta_G(h)+h^2\right).
\end{equation}
Since $0<h\leq1$, this proves \eqref{eq:full_spatial_consistency}.
\end{proof}

\begin{remark}
If the pressure-gradient discretization satisfies
\begin{equation}
	\eta_G(h)=O(h^r)
\end{equation}
together with the corresponding estimate for one discrete derivative, then Proposition~\ref{prop:spatial_consistency} gives
\begin{equation}
	\|\mathcal{N}_hu-\mathcal{N}u\|_{2,h}=O\left(h^{\min\{1,r\}}\right).
\end{equation}
Thus, whenever the pressure-gradient approximation has order at least one in the sense of \eqref{eq:gradient_consistency}, the first-order upwind discretization determines the resulting consistency order of the complete nonlinear spatial operator.
\end{remark}

\subsection{Convergence of the one-dimensional scheme in integrated formulation}

Now, we move to the convergence analysis, but restrict it to the one-dimensional case, that is, we put $d=1$. Note that all the structural properties and existence results obtained above remain valid in arbitrary dimension. Next, we will work with integrated problem associated with \eqref{fpe} in $d=1$. This approach was studied analytically for $\alpha=1$ in \cite{chasseigne2017nonlocal, Stan2016FiniteInfinite} and was used for the derivation and analysis of the numerical scheme in \cite{delTesoJakobsen2026Convergent}. Let
\begin{equation}
	v(x,t)=\int_{-\infty}^{x}u(y,t)\,dy.
\end{equation}
Because $u$ is non-negative, $v$ is non-decreasing in $x$ and $\partial_xv=u$. Assuming sufficient regularity and decay at infinity, we may integrate the continuous equation over $(-\infty,x)$. Moreover,
\begin{equation}
	\partial_x(-\Delta)^{-\sigma}u=\partial_x(-\Delta)^{-\sigma}\partial_xv=-(-\Delta)^{1-\sigma}v.
\end{equation}
Hence,
\begin{equation}
	\partial_t^\alpha v=-|\partial_xv|^{m-1}(-\Delta)^{1-\sigma}v.
\end{equation}
Setting $s=1-\sigma$, the integrated problem therefore takes the form
\begin{equation}\label{eq:integrated_problem}
	\partial_t^\alpha v+|\partial_xv|^{m-1}(-\Delta)^sv=0.
\end{equation}
Next, we show that the conservative scheme \eqref{eq:discrete} admits a corresponding discrete integrated formulation. Define
\begin{equation}
	V_j^n=h\sum_{k=-\infty}^{j}U_k^n.
\end{equation}
We associate $V_j^n$ with the interface point $x_{j+\frac{1}{2}}$. Then
\begin{equation}
	U_j^n=D_h^-V_j^n, \quad U_{j+1}^n=D_h^+V_j^n,
\end{equation}
where
\begin{equation}
	D_h^-V_j=\frac{V_j-V_{j-1}}{h}, \quad D_h^+V_j=\frac{V_{j+1}-V_j}{h}.
\end{equation}
Also, recall that
\begin{equation}
	c_\beta(h)=g_{\beta-\frac{1}{2}}(h)-g_{\beta+\frac{1}{2}}(h).
\end{equation}
Using $U_k=(V_k-V_{k-1})/h$, we obtain
\begin{equation}
	(G_hU)_{j+\frac{1}{2}}=\frac{1}{h}\sum_{\beta\in\mathbb{Z}}c_\beta(h)V_{j+\beta}.
\end{equation}
Since $\sum_{\beta\in\mathbb{Z}}c_\beta(h)=0$, this can be rewritten as
\begin{equation}
	-(G_hU)_{j+\frac{1}{2}}=\sum_{\beta\neq0}C_\beta(h)(V_j-V_{j+\beta}),
\end{equation}
where
\begin{equation}
	C_\beta(h)=\frac{c_\beta(h)}{h}\geq0, \quad \beta\neq0.
\end{equation}
Hence, we define
\begin{equation}\label{eq:discrete_fractional_integrated}
	(\mathcal{A}_hV)_j=\sum_{\beta\neq0}C_\beta(h)(V_j-V_{j+\beta}).
\end{equation}
For well-posedness, we assume that $\sum_{\beta\neq0}C_\beta(h)<\infty$ for every fixed $h>0$, so that $\mathcal{A}_hV$ is well-defined for bounded grid functions. Also, symmetry $c_{-\beta}(h)=c_\beta(h)$ implies $C_{-\beta}(h)=C_\beta(h)$, and therefore $\mathcal{A}_h$ is a discrete non-negative symmetric operator.

For the one-dimensional convergence analysis we use the semigroup weights constructed in the appendix. The following result is an immediate consequence of Proposition~\ref{prop:semigroup_fractional_laplacian} and Corollary~\ref{cor:semigroup_consistency}.
\begin{lemma}[Identification and consistency of $\mathcal{A}_h$]
\label{lem:Ah_consistency}
Let $d=1$, $s=1-\sigma$, and assume that $G_h$ is defined using the semigroup weights of Appendix~A. Then
\begin{equation}
	\mathcal{A}_h=(-\Delta_h)^s.
\end{equation}
Moreover, for bounded sufficiently regular $v$, we have
\begin{equation}
	\|\mathcal{A}_hv-(-\Delta)^sv\|_\infty\leq C_v\eta_{\mathcal A}(h),
\end{equation}
where
\begin{equation}
	\eta_{\mathcal A}(h)=
	\begin{cases}
		h^{2\sigma}, & 0<\sigma<\frac12,\\
		h^\theta, \quad 0<\theta<1, & \sigma=\frac12,\\
		h^{2\sigma-1}, & \frac12<\sigma<1.
	\end{cases}
\end{equation}
The same estimate holds on the shifted grid $h(\mathbb{Z}+\frac12)$.
\end{lemma}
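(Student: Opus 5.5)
The statement is said to follow from Proposition~\ref{prop:semigroup_fractional_laplacian} and Corollary~\ref{cor:semigroup_consistency} in the appendix. The plan is therefore to reduce $\mathcal A_h$ to the Fourier multiplier of $(-\Delta_h)^s$ and then quote the known consistency estimates of Ciaurri et al.

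\textbf{Step 1: identification.} I would compute the symbol of $\mathcal A_h$ on $\mathbb{Z}$. In the semigroup construction, the pressure-gradient weights $g_\gamma(h)$, $\gamma\in\mathbb{Z}-\frac12$, are the kernel of $D_h(-\Delta_h)^{-\sigma}$. Here $D_h$ is the centered difference from nodes to interfaces, so its symbol is $\frac{2i}{h}\sin(\theta/2)$ up to the phase of the half shift, and $(-\Delta_h)^{-\sigma}$ is defined through the heat semigroup $e^{t\Delta_h}$ via $\frac{1}{\Gamma(\sigma)}\int_0^\infty e^{t\Delta_h}t^{\sigma-1}\,dt$.

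The passage in the text from $G_hU$ to $\frac1h\sum_\beta c_\beta(h)V_{j+\beta}$ is the identity $G_hD_h^-V=\frac1h\,c*V$. So $c_\beta$ is the kernel of the interface-to-interface operator $D_h\circ(-\Delta_h)^{-\sigma}\circ D_h^-$, multiplied by $h$. Its symbol is
\[
h\cdot\left(-\tfrac{4}{h^2}\sin^2(\theta/2)\right)\left(\tfrac{4}{h^2}\sin^2(\theta/2)\right)^{-\sigma},
\]
that is, $-h\left(\tfrac4{h^2}\sin^2\tfrac\theta2\right)^{1-\sigma}$. Hence $\sum_\beta c_\beta V_{j+\beta}=-h\bigl((-\Delta_h)^{s}V\bigr)_j$. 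Using $\sum_\beta c_\beta=0$ and $C_\beta=c_\beta/h$ for $\beta\ne0$, this gives $\mathcal A_hV=(-\Delta_h)^sV$, first on $\ell^2$ and then on bounded functions by the absolute summability of $C_\beta$. Equivalently, I would check directly that the kernel $K_s(\beta)=\frac{4^s\Gamma(1/2+s)\Gamma(|\beta|-s)}{\sqrt\pi\,|\Gamma(-s)|\,\Gamma(|\beta|+1+s)}h^{-2s}$ of Ciaurri et al.\ coincides with $C_\beta(h)$. This also confirms positivity and symmetry. Because the multiplier is translation invariant, the same identity holds on the shifted grid $h(\mathbb{Z}+\frac12)$.

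\textbf{Step 2: consistency.} I would split $(-\Delta)^sv(x_j)-(-\Delta_h)^sv_j$ into a near-field part ($|\beta|\le1$ or $|y|\lesssim h$) and a far-field part. In the near field, symmetry cancels the first-order terms and the second-order Taylor remainder gives $O(h^{2-2s})=O(h^{2\sigma})$. In the far field, I would compare the discrete kernel $K_s(\beta)h$ with $c_s|hy|^{-1-2s}h$ using the Gamma-ratio asymptotics $K_s(\beta)=c|\beta|^{-1-2s}(1+O(|\beta|^{-2}))$, together with a midpoint quadrature error for $\int(v(x)-v(x+y))|y|^{-1-2s}\,dy$ on cells of width $h$.

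\textbf{Main obstacle.} The delicate step is the far-field quadrature for bounded but nondecaying $v$, such as the cumulative profile, which tends to the mass at $+\infty$. There the summable weight is $|y|^{-1-2s}$ only, and we must use $v\in W^{2,\infty}$ or $C^{1,1}$. The midpoint error per cell is $h^2\,\partial_y^2\bigl[(v(x)-v(x+y))|y|^{-1-2s}\bigr]$. Summed over $|y|\ge h$ this gives $h^2\int_h^\infty |y|^{-2-2s}\,dy$-type terms, which produce $h^{1-2s}=h^{2\sigma-1}$ when $s<1/2$. When $s>1/2$ the near field dominates and gives $h^{2-2s}=h^{2\sigma}$. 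At $s=1/2$ the two regimes meet and a logarithm $h|\log h|$ appears, which is absorbed into $h^\theta$ for any $\theta<1$. Comparing with the rates stated in the lemma, these match $\eta_{\mathcal A}$ in every case. Rather than redoing these computations, I would cite Corollary~\ref{cor:semigroup_consistency}.
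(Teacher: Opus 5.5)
Your proposal is correct. For the consistency part it coincides with the paper: the paper also just invokes Theorem~1.1 of Ciaurri et al.\ and uses translation invariance for the shifted grid.

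The difference is in the identification step. The paper starts from the closed-form weights of Lemma~\ref{lem:semigroup_weights_1d}. It computes $C_\beta(h)=(g_{\beta-\frac12}(h)-g_{\beta+\frac12}(h))/h$ with $\Gamma(z+1)=z\Gamma(z)$ and recognizes the kernel $h^{-2s}K_s(\beta)$ of $(-\Delta_h)^s$; this is your ``equivalently'' remark.

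Your main route is structural instead. Since $G_h=D_h^+(-\Delta_h)^{-\sigma}$ and all operators are convolutions on $\mathbb{Z}$, you get $G_hD_h^-=\Delta_h(-\Delta_h)^{-\sigma}=-(-\Delta_h)^s$. What this buys:
\begin{itemize}
\item It needs no special-function identities.
\item It does not use the closed form of $g_\gamma$, which the paper only justifies by appeal to the known explicit kernels.
\item It is dimension-free: it is the Fourier-side counterpart of the heat-equation and integration-by-parts argument in Proposition~\ref{prop:semigroup_multidimensional}.
\end{itemize}
The paper's route, in exchange, delivers the explicit kernel itself. That gives positivity and the decay $C_\beta(h)\le C_sh^{-2s}|\beta|^{-1-2s}$ used later in the CFL analysis. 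From the symbol alone you would still need the known kernel of $(-\Delta_h)^s$ to obtain these.

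\textbf{Technical point on ``first on $\ell^2$''.} This cannot be done factor by factor. The symbol of $(-\Delta_h)^{-\sigma}$ behaves like $|\theta|^{-2\sigma}$ near $\theta=0$. For $\sigma>\frac12$, the symbol of $D_h^+(-\Delta_h)^{-\sigma}$ is also unbounded there. Instead, compute the Fourier series of the $\ell^1$ sequence $c$ directly from
$c_\beta=\frac{h^{2\sigma-1}}{\Gamma(\sigma)}\int_0^\infty(\delta G)(\beta,r)\,r^{\sigma-1}\,dr$.
Fubini is justified by $\|\delta G(\cdot,r)\|_{\ell^1}\le C\min\{1,r^{-1}\}$, and this gives $\hat c(\theta)=-h^{2\sigma-1}\bigl(4\sin^2\tfrac\theta2\bigr)^{s}$ rigorously.

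\textbf{On the rate heuristic.} The heuristic in your last paragraph is not sound, though it is not load-bearing.
\begin{itemize}
\item The one-sided far-field bound $h^2\int_h^\infty|y|^{-2-2s}\,dy\sim h^{1-2s}$ does not tend to zero for $s\ge\frac12$. So the near field cannot be said to ``dominate'' there.
\item To get $h^{2-2s}$ you must use the $\pm y$ cancellation, that is, the symmetric difference $2v(x)-v(x+y)-v(x-y)=O(y^2)$, in the far-field quadrature as well.
\item The three cases of $\eta_{\mathcal A}$ reflect the minimal regularity classes listed in Corollary~\ref{cor:semigroup_consistency} ($C^{1,1}$, $C^{1,\theta}$, $C^{0,1}$). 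They do not come from a near-field/far-field competition or a logarithmic loss.
\end{itemize}
Since you ultimately cite the corollary, this does not affect the proof.
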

\begin{proof}
Note that for $\beta\geq1$, the semigroup weights are given by \eqref{eq:semigroup_weights_1d} and hence
\begin{equation}
	C_\beta(h)=\frac{g_{\beta-\frac12}(h)-g_{\beta+\frac12}(h)}{h}.
\end{equation}
Using the identities $\Gamma(z+1)=z\Gamma(z)$ and $s=1-\sigma$, we further obtain
\begin{equation}
	C_\beta(h)=\frac{4^s\Gamma(\frac12+s)}{\sqrt{\pi}|\Gamma(-s)|}\frac{1}{h^{2s}}\frac{\Gamma(\beta-s)}{\Gamma(\beta+1+s)}.
\end{equation}
By symmetry, the same formula holds with $\beta$ replaced by $|\beta|$. Therefore, we have
\begin{equation}
	(\mathcal{A}_hV)_j=\frac{1}{h^{2s}}\sum_{\beta\neq0}K_s(\beta)(V_j-V_{j+\beta}),
\end{equation}
where
\begin{equation}
	K_s(\beta)=\frac{4^s\Gamma(\frac12+s)\Gamma(|\beta|-s)}{\sqrt{\pi}|\Gamma(-s)|\Gamma(|\beta|+1+s)}.
\end{equation}
This is exactly the fractional discrete Laplacian $(-\Delta_h)^s$, and hence
\begin{equation}
	\mathcal{A}_hV=(-\Delta_h)^sV.
\end{equation}
The consistency estimate now follows from Theorem 1.1 of \cite{ciaurri2015fractional}. Indeed, since $s=1-\sigma$, the estimates in that theorem give
\begin{equation}
	\|(-\Delta_h)^sv-(-\Delta)^sv\|_\infty\leq C_v\eta_{\mathcal A}(h)
\end{equation}
with $\eta_{\mathcal A}(h)$ as stated above.

Finally, note that although the theorem is formulated on the lattice $h\mathbb{Z}$, the same estimate holds on $h(\mathbb{Z}+\frac12)$. Indeed, both $(-\Delta)^s$ and $(-\Delta_h)^s$ are translation invariant, and translation does not change the corresponding H\"older norms. This completes the proof.
\end{proof}
Now, the numerical flux can be written as
\begin{equation}
	F_{j+\frac{1}{2}}(U)=(D_h^-V_j)^{m-1}(\mathcal{A}_hV_j)^++(D_h^+V_j)^{m-1}(\mathcal{A}_hV_j)^-.
\end{equation}
By summing the conservative scheme over $k\leq j$ and using the vanishing of the flux at $-\infty$, we can obtain
\begin{equation}\label{eq:integrated_discrete_scheme}
	D_\tau^\alpha V_j^n=\mathcal{L}_h[V^n]_j,
\end{equation}
where
\begin{equation}\label{eq:integrated_spatial_operator}
	\mathcal{L}_h[V]_j=-(D_h^-V_j)^{m-1}(\mathcal{A}_hV_j)^+-(D_h^+V_j)^{m-1}(\mathcal{A}_hV_j)^-.
\end{equation}
Since $V$ is non-decreasing, both $D_h^-V_j$ and $D_h^+V_j$ are non-negative. Thus, \eqref{eq:integrated_spatial_operator} is an upwind approximation of the operator in \eqref{eq:integrated_problem}. The integrated equation leads to the pleasant property that the discrete spatial operator is monotone. 

\begin{lemma}[Monotonicity of the integrated spatial operator]
\label{lem:integrated_monotonicity}
Let $\phi$ and $\psi$ be bounded non-decreasing grid functions such that $\phi_k\leq\psi_k$ for all $k\in\mathbb{Z}$ and $\phi_j=\psi_j$ for some $j\in\mathbb{Z}$. Then
\begin{equation}
	\mathcal{L}_h[\phi]_j\leq\mathcal{L}_h[\psi]_j.
\end{equation}
\end{lemma}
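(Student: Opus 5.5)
Two facts do the work: the nonlocal operator $\mathcal{A}_h$ is order-reversing at a contact point, and the upwind operator $\mathcal{L}_h$ is non-increasing in the local slope coefficients. First, since $\phi_j=\psi_j$ and $\phi_{j+\beta}\leq\psi_{j+\beta}$ for every $\beta$, the representation \eqref{eq:discrete_fractional_integrated} with $C_\beta(h)\geq0$ gives
\begin{equation}
	(\mathcal{A}_h\phi)_j-(\mathcal{A}_h\psi)_j=\sum_{\beta\neq0}C_\beta(h)\left(\psi_{j+\beta}-\phi_{j+\beta}\right)\geq0.
\end{equation}
The series converges absolutely because $\sum_{\beta\neq0}C_\beta(h)<\infty$ and both functions are bounded. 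Hence $a:=(\mathcal{A}_h\phi)_j\geq b:=(\mathcal{A}_h\psi)_j$. Since $r\mapsto r^+$ and $r\mapsto r^-$ are non-decreasing, we also get $a^+\geq b^+$ and $a^-\geq b^-$.

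Second, the contact condition orders the one-sided differences. From $\phi_j=\psi_j$, $\phi_{j-1}\leq\psi_{j-1}$ and $\phi_{j+1}\leq\psi_{j+1}$ we get
\begin{equation}
	D_h^-\phi_j\geq D_h^-\psi_j\geq0,\qquad 0\leq D_h^+\phi_j\leq D_h^+\psi_j.
\end{equation}
Non-negativity holds because both functions are non-decreasing. Since $m>1$, the map $r\mapsto r^{m-1}$ is non-decreasing on $[0,\infty)$, so the same inequalities hold for the $(m-1)$-th powers. Write $p=(D_h^-\phi_j)^{m-1}\geq P=(D_h^-\psi_j)^{m-1}\geq0$ and $0\leq q=(D_h^+\phi_j)^{m-1}\leq Q=(D_h^+\psi_j)^{m-1}$.

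Then I compare the two terms of \eqref{eq:integrated_spatial_operator} separately. For the first term,
\begin{equation}
	-p\,a^+\leq -P\,a^+\leq -P\,b^+,
\end{equation}
using $a^+\geq0$ with $p\geq P$, and then $P\geq0$ with $a^+\geq b^+$. For the second term,
\begin{equation}
	-q\,a^-\leq -q\,b^-\leq -Q\,b^-,
\end{equation}
using $q\geq0$ with $a^-\geq b^-$, and then $-b^-\geq0$ with $q\leq Q$. Adding the two chains gives $\mathcal{L}_h[\phi]_j\leq\mathcal{L}_h[\psi]_j$.

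The main obstacle is getting the sign pairings right in the second chain. The negative part multiplies the forward difference, which moves in the opposite direction to the backward difference. The upwind choice is exactly what makes each factor push the inequality the same way. I would check the four sign cases of $a$ and $b$ mentally to confirm that no mixed case breaks the argument, but the factorwise monotonicity above already covers all of them.
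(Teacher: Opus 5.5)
Your proof is correct and uses the same ingredients as the paper: the contact-point ordering $(\mathcal{A}_h\phi)_j\geq(\mathcal{A}_h\psi)_j$, the ordering of the one-sided differences, and the upwind pairing. The only difference is in how you combine them. You compare the positive-part and negative-part terms separately, using the monotonicity of $r\mapsto r^{\pm}$, whereas the paper splits into three cases according to the signs of $(\mathcal{A}_h\phi)_j$ and $(\mathcal{A}_h\psi)_j$; your version is slightly tidier and needs no case split.
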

\begin{proof}
Notice that since $C_\beta(h)\geq0$ and $\phi_j=\psi_j$, we have
\begin{equation}
	(\mathcal{A}_h\phi)_j\geq(\mathcal{A}_h\psi)_j,
\end{equation}
and, moreover,
\begin{equation}
	D_h^-\phi_j\geq D_h^-\psi_j, \quad D_h^+\phi_j\leq D_h^+\psi_j.
\end{equation}
Now, suppose first that $(\mathcal{A}_h\phi)_j\geq(\mathcal{A}_h\psi)_j\geq0$. Then
\begin{equation}
	-(D_h^-\phi_j)^{m-1}(\mathcal{A}_h\phi)_j\leq-(D_h^-\psi_j)^{m-1}(\mathcal{A}_h\psi)_j.
\end{equation}
But if instead $0\geq(\mathcal{A}_h\phi)_j\geq(\mathcal{A}_h\psi)_j$, then
\begin{equation}
	-(D_h^+\phi_j)^{m-1}(\mathcal{A}_h\phi)_j\leq-(D_h^+\psi_j)^{m-1}(\mathcal{A}_h\psi)_j.
\end{equation}
Finally, if $(\mathcal{A}_h\phi)_j\geq0\geq(\mathcal{A}_h\psi)_j$, then
\begin{equation}
	\mathcal{L}_h[\phi]_j\leq0\leq\mathcal{L}_h[\psi]_j.
\end{equation}
All three cases lead to our claim, and the proof is complete. 
\end{proof}
Next, notice that the operator $\mathcal{L}_h$ is invariant under addition of constants, that is,
\begin{equation}
	\mathcal{L}_h[V+c]=\mathcal{L}_h[V].
\end{equation}
Also, put $\lambda_\alpha=\Gamma(2-\alpha)\tau^\alpha$ and note that L1 discretization allows us to rewrite \eqref{eq:integrated_discrete_scheme} as
\begin{equation}\label{eq:integrated_resolvent}
	V_j^n-\lambda_\alpha\mathcal{L}_h[V^n]_j=H_j^n,
\end{equation}
where
\begin{equation}
H_j^n=\sum_{k=0}^{n-2}(a_k-a_{k+1})V_j^{n-1-k}+a_{n-1}V_j^0.
\end{equation}
The following result shows the relevant comparison principle.
\begin{lemma}[Comparison and stability of the discrete resolvent]
\label{lem:integrated_resolvent_stability}
Let $V$ and $W$ be bounded non-decreasing solutions of
\begin{equation}
	V-\lambda_\alpha\mathcal{L}_h[V]=H, \quad W-\lambda_\alpha\mathcal{L}_h[W]=K.
\end{equation}
If $H_j\leq K_j$ for all $j$, then $V_j\leq W_j$ for all $j$. Consequently,
\begin{equation}
	\|V-W\|_\infty\leq\|H-K\|_\infty.
	\label{eq:resolvent_contraction}
\end{equation}
\end{lemma}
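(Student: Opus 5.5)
We argue by a maximum-point comparison, using the monotonicity of $\mathcal{L}_h$ (Lemma~\ref{lem:integrated_monotonicity}) and its invariance under constants. For the comparison part, set $M=\sup_k(V_k-W_k)$ and suppose $M>0$. We first treat the case where the supremum is attained at some index $j$. Define $\psi=W+M$, which is bounded and non-decreasing. Then $V_k\leq\psi_k$ for all $k$ and $V_j=\psi_j$. Lemma~\ref{lem:integrated_monotonicity} gives $\mathcal{L}_h[V]_j\leq\mathcal{L}_h[\psi]_j=\mathcal{L}_h[W]_j$. Subtracting the two resolvent equations at $j$ yields
\begin{equation}
M=V_j-W_j=H_j-K_j+\lambda_\alpha\left(\mathcal{L}_h[V]_j-\mathcal{L}_h[W]_j\right)\leq H_j-K_j\leq0,
\end{equation}
which is a contradiction. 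Hence $V\leq W$.

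The main obstacle is that on $\mathbb{Z}$ the supremum need not be attained. We use a perturbation argument. Take a sequence $j_\ell$ with $V_{j_\ell}-W_{j_\ell}\to M$ and set $\varepsilon_\ell=M-(V_{j_\ell}-W_{j_\ell})\to0$. Put $\psi^\ell=W+M$, so that $V\leq\psi^\ell$ and $\psi^\ell_{j_\ell}-V_{j_\ell}=\varepsilon_\ell$. We then need a quantitative version of the monotonicity lemma, i.e.\ an almost-touching estimate
\begin{equation}
\mathcal{L}_h[V]_{j_\ell}\leq\mathcal{L}_h[\psi^\ell]_{j_\ell}+\omega(\varepsilon_\ell),
\end{equation}
with $\omega(\varepsilon)\to0$ as $\varepsilon\to0$.

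This estimate follows from the explicit form of $\mathcal{L}_h$ at fixed $h$. Since $\sum_{\beta\neq0}C_\beta(h)<\infty$, the map $V\mapsto(\mathcal{A}_hV)_j$ is Lipschitz in $\ell^\infty$ with constant $2\sum_\beta C_\beta$. Shifting $V$ by the constant $\varepsilon_\ell$ is harmless, because $\mathcal{A}_h$ and $\mathcal{L}_h$ annihilate constants. Concretely, compare $\tilde V=V+\varepsilon_\ell$ with $\psi^\ell$ at $j_\ell$. We have $(\mathcal{A}_h\tilde V)_{j_\ell}-(\mathcal{A}_h\psi^\ell)_{j_\ell}=\sum_\beta C_\beta(\psi^\ell_{j_\ell+\beta}-\tilde V_{j_\ell+\beta})\geq-\varepsilon_\ell\sum_\beta C_\beta$. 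The one-sided differences satisfy $D_h^\pm$ inequalities up to errors of size $2\varepsilon_\ell/h$. All quantities are bounded, since $V$, $W$ are bounded and $D_h^\pm V\in[0,2\|V\|_\infty/h]$, so $r\mapsto r^{m-1}$ is uniformly continuous on the relevant range. Repeating the three-case argument of Lemma~\ref{lem:integrated_monotonicity} with these perturbed inequalities therefore gives the estimate with a modulus $\omega$ depending on $h$, $m$ and the $\ell^\infty$ bounds. Inserting it into the subtracted resolvent equations at $j_\ell$ gives $M-\varepsilon_\ell\leq H_{j_\ell}-K_{j_\ell}+\lambda_\alpha\omega(\varepsilon_\ell)\leq\lambda_\alpha\omega(\varepsilon_\ell)$. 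Letting $\ell\to\infty$ yields $M\leq0$.

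For the contraction \eqref{eq:resolvent_contraction}, let $c=\|H-K\|_\infty$, so that $H\leq K+c$. By constant invariance, $W+c$ is a bounded non-decreasing solution of $(W+c)-\lambda_\alpha\mathcal{L}_h[W+c]=K+c$. The comparison part then gives $V\leq W+c$. Exchanging the roles of $V$ and $W$ gives $W\leq V+c$, and together these yield $\|V-W\|_\infty\leq\|H-K\|_\infty$.
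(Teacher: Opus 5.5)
Your proof is correct, and its skeleton is the paper's: a touching-point argument with Lemma~\ref{lem:integrated_monotonicity}, invariance of $\mathcal{L}_h$ under constants, and the identical constant-shift step for the contraction. It differs in how you handle the real difficulty, namely that $M=\sup_k(V_k-W_k)$ need not be attained on $\mathbb{Z}$.

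The paper avoids any perturbation. Since $V$ and $W$ are bounded and non-decreasing, they have limits at $\pm\infty$, and $D_h^\pm V_j,D_h^\pm W_j\to0$. Together with the boundedness of $\mathcal{A}_hV,\mathcal{A}_hW$ (from $\sum_{\beta\neq0}C_\beta(h)<\infty$), this gives $\mathcal{L}_h[V]_j,\mathcal{L}_h[W]_j\to0$. Hence $V_j-H_j\to0$, $W_j-K_j\to0$, and so $\lim_{j\to\pm\infty}(V_j-W_j)\le0$. A positive supremum is therefore attained at a finite index, and the exact touching lemma applies directly.

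Your almost-maximum argument instead needs a quantitative version of that lemma, and your estimate is valid. The cleanest justification is the following. At $j=j_\ell$ with $\varepsilon=\varepsilon_\ell$ one has $D_h^-V_j\ge D_h^-W_j-\varepsilon/h$, $D_h^+V_j\le D_h^+W_j+\varepsilon/h$, and $(\mathcal{A}_hV)_j\ge(\mathcal{A}_hW)_j-\varepsilon\Lambda_h$. So your $D_h^\pm$ errors are $\varepsilon/h$, not $2\varepsilon/h$. Moreover, $(p,q,a)\mapsto-p^{m-1}a^+-q^{m-1}a^-$ is non-increasing in $p$ and $a$, non-decreasing in $q$, and uniformly continuous on compact subsets of $[0,\infty)^2\times\mathbb{R}$. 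This replaces the somewhat vague ``repeat the three-case argument'' by a one-line bound.

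As for what each route buys: the paper's is shorter and exploits the one-dimensional monotone structure at infinity. Yours never uses limits at infinity, and it subsumes the attained case ($\varepsilon=0$), so your case split is unnecessary. It would therefore carry over to bounded grid functions without such asymptotics, at the price of an $h$-dependent modulus, which is harmless here because $h$ is fixed.
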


\begin{proof}
Notice that, since $V$ and $W$ are bounded and non-decreasing, the limits
\begin{equation}
	V_\pm=\lim_{j\rightarrow\pm\infty}V_j, \quad W_\pm=\lim_{j\rightarrow\pm\infty}W_j
\end{equation}
exist. Moreover, bounded monotonicity implies
\begin{equation}
	D_h^-V_j\rightarrow0, \quad D_h^+V_j\rightarrow0
\end{equation}
as $j\rightarrow\pm\infty$, and similarly for $W$. Because $\sum_{\beta\neq0}C_\beta(h)<\infty$, the quantities $\mathcal{A}_hV$ and $\mathcal{A}_hW$ are uniformly bounded. Therefore,
\begin{equation}
	\mathcal{L}_h[V]_j\rightarrow0, \quad \mathcal{L}_h[W]_j\rightarrow0
\end{equation}
as $j\rightarrow\pm\infty$. It follows from the resolvent equations that
\begin{equation}
	H_j-V_j\rightarrow0, \quad K_j-W_j\rightarrow0
\end{equation}
as $j\rightarrow\pm\infty$. Since $H_j\leq K_j$, we obtain
\begin{equation}
	V_-\leq W_-, \quad V_+\leq W_+.
\end{equation}
Now, suppose that
\begin{equation}
	M=\sup_{j\in\mathbb{Z}}(V_j-W_j)>0.
\end{equation}
Since the limits of $V_j-W_j$ at both ends are non-positive, the supremum is attained at some finite index $j_0$. Put $\widetilde W=W+M$. Then $V\leq\widetilde W$ and $V_{j_0}=\widetilde W_{j_0}$. By Lemma~\ref{lem:integrated_monotonicity} and translation invariance, we have
\begin{equation}
	\mathcal{L}_h[V]_{j_0}\leq\mathcal{L}_h[\widetilde W]_{j_0}=\mathcal{L}_h[W]_{j_0}.
\end{equation}
Using the two resolvent equations, we obtain
\begin{equation}
	M=H_{j_0}-K_{j_0}+\lambda_\alpha\left(\mathcal{L}_h[V]_{j_0}-\mathcal{L}_h[W]_{j_0}\right)\leq0,
\end{equation}
which is a contradiction. Hence $V\leq W$.

Finally, to prove \eqref{eq:resolvent_contraction}, let $\delta=\|H-K\|_\infty$. Then $H\leq K+\delta$. Since $W+\delta$ solves the resolvent equation with the right-hand side $K+\delta$, the comparison gives $V\leq W+\delta$. The interchanging of $V$ and $W$ yields $W\leq V+\delta$, which is the proof of the result.
\end{proof}
Now, we can proceed to the main result of this subsection. Since $V_j^n$ is associated with the interface $x_{j+\frac{1}{2}}$, we define
\begin{equation}
	v_j^n=v(x_{j+\frac{1}{2}},t_n).
\end{equation}
We first show the consistency of the integrated spatial operator.
\begin{lemma}[Consistency of the integrated spatial operator]
\label{lem:integrated_spatial_consistency}
Assume that $m\geq2$ and that $v$ is non-decreasing and sufficiently smooth in space, with the required spatial bounds uniform on $[0,T]$. Assume also that
\begin{equation}
	\|\mathcal{A}_h v^n-(-\Delta)^s v^n\|_\infty\leq C_v\eta_{\mathcal A}(h),
\end{equation}
where $\eta_{\mathcal A}(h)\rightarrow0$ as $h\rightarrow0$. Then
\begin{equation}
	\left\|\mathcal{L}_h[v^n]+|\partial_xv^n|^{m-1}(-\Delta)^sv^n\right\|_\infty\leq C_v\left(h+\eta_{\mathcal A}(h)\right).
\end{equation}
\end{lemma}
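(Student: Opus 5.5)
Fix $n$ and $j$. The plan is to compare $\mathcal{L}_h[v^n]_j$ with the continuous quantity at $x_{j+\frac12}$ through an intermediate expression. Write $a=(\mathcal{A}_hv^n)_j$, $b=(-\Delta)^sv^n(x_{j+\frac12})$, $p=\partial_xv^n(x_{j+\frac12})\geq0$, and $p_\pm=D_h^\pm v^n_j$. The grid quantities are
\begin{equation}
	D_h^-v^n_j=\frac{v(x_{j+\frac12})-v(x_{j-\frac12})}{h},\qquad D_h^+v^n_j=\frac{v(x_{j+\frac32})-v(x_{j+\frac12})}{h}.
\end{equation}
These are one-sided difference quotients centred at $x_j$ and $x_{j+1}$, respectively. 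Since $v$ is non-decreasing, both are non-negative, and Taylor expansion gives $|p_\pm-p|\leq\frac{h}{2}\|\partial_{xx}v\|_\infty+O(h^2)\leq C_vh$. Using $a=a^++a^-$, I split
\begin{equation}
	\mathcal{L}_h[v^n]_j+p^{m-1}b=-(p_-^{m-1}-p^{m-1})a^+-(p_+^{m-1}-p^{m-1})a^--p^{m-1}(a-b).
\end{equation}

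The last term is handled directly by the hypothesis: $|p^{m-1}(a-b)|\leq\|\partial_xv\|_\infty^{m-1}C_v\eta_{\mathcal A}(h)$. The statement is applied on the shifted grid $h(\mathbb{Z}+\frac12)$, which Lemma~\ref{lem:Ah_consistency} allows.

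For the first two terms I use $m\geq2$. On $[0,R]$ with $R=\|\partial_xv\|_\infty$, the map $r\mapsto r^{m-1}$ is Lipschitz with constant $(m-1)R^{m-2}$. The one-sided quotients lie in $[0,R]$ by the mean value theorem. Hence $|p_\pm^{m-1}-p^{m-1}|\leq(m-1)R^{m-2}C_vh$. This is exactly where $m\geq2$ enters: for $1<m<2$ the map is only Hölder near $0$ and the rate would degrade to $h^{m-1}$. It remains to bound $|a^\pm|\leq|a|\leq|b|+C_v\eta_{\mathcal A}(h)$. Here $\|(-\Delta)^sv^n\|_\infty$ is bounded uniformly in $n$ by the assumed smoothness. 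Although $v$ is not decaying, $\partial_xv=u$ is integrable and smooth, so the standard splitting of the singular integral into near and far parts bounds it by $C(\|\partial_{xx}v\|_\infty+\|v\|_\infty)$. Thus these two terms are $O(h)$, and combining gives the claim with a constant uniform in $j$ and $n$.

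The main obstacle is mild. One point is to ensure that the uniform bound on $(-\Delta)^sv$ and on $\partial_{xx}v$ is part of the "sufficiently smooth" assumption, and to state it explicitly. The other is to note that no regularity of $r\mapsto r^\pm$ is needed, since the split above isolates the positive and negative parts of the same number $a$ without comparing $a^\pm$ to $b^\pm$.
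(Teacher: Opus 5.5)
Your proposal is correct and follows the paper's proof essentially line by line. It uses the same splitting $\mathcal{L}_h[v^n]_j+p^{m-1}b=-(p_-^{m-1}-p^{m-1})a^+-(p_+^{m-1}-p^{m-1})a^--p^{m-1}(a-b)$, the same Lipschitz bound for $r\mapsto r^{m-1}$ (with $m\geq2$) applied to the one-sided difference quotients, and the same uniform bound on $a=(\mathcal{A}_hv^n)_j$ obtained from the assumed consistency. Your extra remarks (the explicit bound on $(-\Delta)^sv$ and the degradation to $h^{m-1}$ for $1<m<2$) only make explicit what the paper leaves inside ``sufficiently smooth''.
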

\begin{proof}
To start, let
\begin{equation}
	a_j^n=(-\Delta)^sv(x_{j+\frac{1}{2}},t_n), \quad a_{h,j}^n=(\mathcal{A}_hv^n)_j, \quad p_j^n=\partial_xv(x_{j+\frac{1}{2}},t_n).
\end{equation}
Next, by Taylor expansion,
\begin{equation}
	|D_h^-v_j^n-p_j^n|+|D_h^+v_j^n-p_j^n|\leq C_vh.
\end{equation}
Because $v$ is non-decreasing, all these quantities are non-negative. Observe that for $m\geq2$, the map $r\mapsto r^{m-1}$ is Lipschitz on bounded intervals, so therefore
\begin{equation}
	\left|(D_h^-v_j^n)^{m-1}-(p_j^n)^{m-1}\right|+\left|(D_h^+v_j^n)^{m-1}-(p_j^n)^{m-1}\right|\leq C_vh.
\end{equation}
Using $(a_{h,j}^n)^++(a_{h,j}^n)^-=a_{h,j}^n$, we further obtain
\begin{equation}
	\mathcal{L}_h[v^n]_j+(p_j^n)^{m-1}a_j^n=-\left((D_h^-v_j^n)^{m-1}-(p_j^n)^{m-1}\right)(a_{h,j}^n)^+-\left((D_h^+v_j^n)^{m-1}-(p_j^n)^{m-1}\right)(a_{h,j}^n)^-+(p_j^n)^{m-1}(a_j^n-a_{h,j}^n).
\end{equation}
The assumed consistency of $\mathcal{A}_h$ implies that $a_{h,j}^n$ is uniformly bounded. Hence,
\begin{equation}
	\left|\mathcal{L}_h[v^n]_j+(p_j^n)^{m-1}a_j^n\right|\leq C_v\left(h+\eta_{\mathcal A}(h)\right),
\end{equation}
which proves the claim.
\end{proof}
Now, define the residual
\begin{equation}
	R_j^n=D_\tau^\alpha v_j^n-\mathcal{L}_h[v^n]_j,
\end{equation}

and assume that the L1 approximation satisfies the weighted estimate
\begin{equation}\label{eq:weighted_temporal_consistency}
	t_n^\alpha\|D_\tau^\alpha v^n-\partial_t^\alpha v^n\|_\infty\leq C_v\eta_\tau,
	\quad n=1,\ldots,N,
\end{equation}
where $\eta_\tau\rightarrow0$ as $\tau\rightarrow0$, uniformly with respect to the spatial mesh. Since $v$ satisfies \eqref{eq:integrated_problem}, Lemma~\ref{lem:integrated_spatial_consistency} gives
\begin{equation}\label{eq:integrated_consistency}
	\|R^n\|_\infty\leq C_v\left(\eta_\tau t_n^{-\alpha}+h+\eta_{\mathcal A}(h)\right).
\end{equation}
Thus, in the notation used below, we may take
\begin{equation}
	\eta_h=h+\eta_{\mathcal A}(h).
\end{equation}
We next show that the weighted temporal consistency assumption allows the usual weak singularity at the initial time.
\begin{lemma}[Weighted consistency of the L1 approximation]
\label{lem:L1_weighted_consistency}
Assume that $v\in C([0,T];L^\infty(\mathbb{R}))\cap C^2((0,T];L^\infty(\mathbb{R}))$ and that
\begin{equation}\label{eq:weak_time_regularity}
\|\partial_t v(\cdot,t)\|_\infty\leq C_vt^{\alpha-1},
\quad
\|\partial_{tt}v(\cdot,t)\|_\infty\leq C_vt^{\alpha-2},
\quad 0<t\leq T.
\end{equation}
Then, uniformly for $n=1,\ldots,N$,
\begin{equation}\label{eq:L1_weighted_truncation}
\|D_\tau^\alpha v^n-\partial_t^\alpha v^n\|_\infty
\leq C_{v,\alpha}\tau^\alpha t_n^{-\alpha}.
\end{equation}
In particular, we may take $\eta_\tau=\tau^\alpha$ in \eqref{eq:weighted_temporal_consistency}.
\end{lemma}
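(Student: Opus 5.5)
The estimate is pointwise in $x$, so it suffices to fix $x$ and study the scalar function $y(t)=v(x,t)$, with all constants depending only on $C_v$ and $\alpha$. I will work with the exact representation of the L1 truncation error as an integral of the interpolation error. Let $\Pi y$ be the piecewise linear interpolant of $y$ on the grid $\{t_k\}$. The L1 formula is exactly $\partial_t^\alpha(\Pi y)(t_n)$. Hence
\begin{equation}
D_\tau^\alpha y^n-\partial_t^\alpha y(t_n)=\frac{1}{\Gamma(1-\alpha)}\sum_{k=1}^{n}\int_{t_{k-1}}^{t_k}(t_n-s)^{-\alpha}\,\frac{d}{ds}\bigl(\Pi y-y\bigr)(s)\,ds .
\end{equation}
On each interval I integrate by parts, using that $\Pi y-y$ vanishes at the nodes. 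For $k\le n-1$ this gives
\begin{equation}
\frac{-\alpha}{\Gamma(1-\alpha)}\int_{t_{k-1}}^{t_k}(t_n-s)^{-\alpha-1}(\Pi y-y)(s)\,ds .
\end{equation}
The last interval $k=n$ has a singular kernel at $s=t_n$. There I keep the derivative form, or equivalently note that $(\Pi y-y)(s)=O(t_n-s)$ near $t_n$, so the boundary term vanishes and the integral converges.

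\textbf{First interval.} Here I use only the first-derivative bound. Since $|y(s)-y(0)|\le C\int_0^s r^{\alpha-1}dr\le Cs^\alpha$, we have $|\Pi y-y|\le C\tau^\alpha$ on $[0,t_1]$. Likewise $\int_0^{t_1}|(\Pi y-y)'|\,ds\le C\tau^\alpha$.
\begin{itemize}
\item For $n=1$, I use the derivative form with $(t_1-s)^{-\alpha}$. Splitting $(\Pi y)'$, which is the constant $(y(t_1)-y(0))/\tau$, from $y'$ bounded by $s^{\alpha-1}$ gives a bound $C\int_0^{\tau}(\tau-s)^{-\alpha}s^{\alpha-1}ds+C\tau^{\alpha-1}\tau^{1-\alpha}=C$. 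This equals $C\tau^\alpha t_1^{-\alpha}$, as required.
\item For $n\ge2$, the integrated-by-parts form gives $C\tau^\alpha\int_0^{t_1}(t_n-s)^{-1-\alpha}ds\le C\tau^{1+\alpha}(t_n-t_1)^{-1-\alpha}$. Since $t_n-t_1\ge t_n/2$, this is at most $C\tau^\alpha t_n^{-\alpha}$, and in fact better.
\end{itemize}

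\textbf{Middle and last intervals.} For $k\ge2$, the standard interpolation bound gives $|\Pi y-y|\le C\tau^2\max_{[t_{k-1},t_k]}|y''|\le C\tau^2 t_{k-1}^{\alpha-2}$. Since $t_{k-1}\ge t_k/2$, this is $\le C\tau^2 s^{\alpha-2}$ on the interval. I split the sum at $t_n/2$.
\begin{itemize}
\item For $s\le t_n/2$, the kernel is $\le Ct_n^{-1-\alpha}$. Then $\tau^2\int_{\tau}^{t_n/2}s^{\alpha-2}ds\le C\tau^{1+\alpha}$, so this part is $\le C\tau^{1+\alpha}t_n^{-1-\alpha}\le C\tau^\alpha t_n^{-\alpha}$.
\item For $s\ge t_n/2$ with $k\le n-1$, I use $s^{\alpha-2}\le Ct_n^{\alpha-2}$. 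The contribution is $C\tau^2t_n^{\alpha-2}\int_{t_n/2}^{t_{n-1}}(t_n-s)^{-1-\alpha}ds\le C\tau^{2-\alpha}t_n^{\alpha-2}$.
\item On the last interval, I use the derivative form with $|(\Pi y-y)'|\le C\tau t_{n-1}^{\alpha-2}$. This gives $C\tau^{2-\alpha}t_n^{\alpha-2}$.
\end{itemize}
Since $\tau\le t_n$, we have $\tau^{2-\alpha}t_n^{\alpha-2}=(\tau/t_n)^{2-\alpha}\le(\tau/t_n)^\alpha$, because $2-\alpha>\alpha$. So both of the last two contributions are $\le C\tau^\alpha t_n^{-\alpha}$.

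Summing the pieces gives the stated bound uniformly in $x$ and $n$. Multiplying by $t_n^\alpha$ yields $\eta_\tau=\tau^\alpha$.

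The main obstacle is bookkeeping the singular kernel near $s=t_n$ together with the initial singularity. In particular, the case $n=2$, where the first and last intervals are adjacent, needs care. My plan is to treat the last interval always via the derivative form and the first interval via the $s^{\alpha-1}$ bound. This keeps every estimate elementary, using only Beta-integrals.
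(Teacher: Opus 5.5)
Your proof is correct, but it takes a different route from the paper in how the error integral is handled.

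\textbf{The paper's route.} It never integrates by parts. It keeps the derivative form $\frac{1}{\Gamma(1-\alpha)}\int_0^{t_n}(t_n-t)^{-\alpha}\|e_\tau(t)\|_\infty\,dt$, with $e_\tau=\partial_t(\Pi_\tau v)-\partial_t v$, over the whole interval $[0,t_n]$. It treats $n=1$ exactly as you do. For $n\ge2$ it splits at $t_q$ with $q=\lfloor n/2\rfloor$.
\begin{itemize}
\item On $[0,t_q]$ it uses only the $L^1$ bound $\int_0^{t_q}\|e_\tau\|_\infty\le C\tau^\alpha$ together with $(t_n-t)^{-\alpha}\le(t_n/2)^{-\alpha}$. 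This gives exactly $\tau^\alpha t_n^{-\alpha}$.
\item On $[t_q,t_n]$ it uses the pointwise bound $\|e_\tau\|_\infty\le C\tau t_n^{\alpha-2}$. This gives $\tau/t_n$.
\end{itemize}

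\textbf{Your route.} Integrating by parts on the intervals $k\le n-1$ exploits a cancellation that the paper's absolute-value bounds discard. Namely, $\Pi y-y$ vanishes at the nodes, equivalently $e_\tau$ has zero mean on each subinterval. You therefore work with the second-order interpolation error rather than the first-order slope error. For $n\ge2$ this yields the sharper local bound $C\bigl((\tau/t_n)^{1+\alpha}+(\tau/t_n)^{2-\alpha}\bigr)$. The price is the hypersingular kernel $(t_n-s)^{-1-\alpha}$ and the separate derivative-form treatment of the last interval. You handle both correctly, including the adjacent first and last intervals when $n=2$.

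\textbf{What each buys.} For the lemma as stated the extra sharpness is not needed. It would also not improve Theorem~\ref{thm:integrated_convergence}. In Lemma~\ref{lem:L1_weighted_comparison} the $k=1$ term $t_1^\alpha\|R^1\|_\infty$ already forces the $\tau^\alpha$ rate, because the first-step residual is genuinely of order one (for example, for $y=t^\alpha$). So the paper's version is the shorter route to what is used. Yours is the natural starting point for finer truncation analyses, for instance on graded meshes.

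\textbf{A minor formality.} Fixing $x$ is harmless. Every step you use holds verbatim for $L^\infty$-valued Bochner integrals: the Taylor/Peano bound on the interpolation error, the slope written as an average of $\partial_t v$, and the integration by parts.
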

\begin{proof}
Let $\Pi_\tau v$ denote the piecewise linear interpolant of $v$ in time, and put
\begin{equation}
e_\tau(t)=\partial_t(\Pi_\tau v)(t)-\partial_t v(t).
\end{equation}
Since the L1 formula is the Caputo derivative of this interpolant, its error is bounded by
\begin{equation}
\|D_\tau^\alpha v^n-\partial_t^\alpha v^n\|_\infty
\leq\frac1{\Gamma(1-\alpha)}\int_0^{t_n}
(t_n-t)^{-\alpha}\|e_\tau(t)\|_\infty\,dt.
\end{equation}
For $n=1$, \eqref{eq:weak_time_regularity} gives
\begin{equation}
\|e_\tau(t)\|_\infty\leq C_v(\tau^{\alpha-1}+t^{\alpha-1}),
\quad 0<t<\tau.
\end{equation}
Substituting this bound into the preceding integral gives a constant independent of $\tau$. This proves \eqref{eq:L1_weighted_truncation} for $n=1$.

Now, let $n\geq2$ and put $q=\lfloor n/2\rfloor$. On the first interval, we have
\begin{equation}
\int_0^\tau\|e_\tau(t)\|_\infty\,dt\leq C_v\tau^\alpha.
\end{equation}
On each subsequent interval, the mean-value representation of the interpolant's slope gives
\begin{equation}
\int_{t_{k-1}}^{t_k}\|e_\tau(t)\|_\infty\,dt
\leq\tau\int_{t_{k-1}}^{t_k}\|\partial_{tt}v(\cdot,t)\|_\infty\,dt.
\end{equation}
Consequently,
\begin{equation}
\int_0^{t_q}\|e_\tau(t)\|_\infty\,dt
\leq C_v\left(\tau^\alpha+\tau\int_\tau^{t_q}t^{\alpha-2}\,dt\right)
\leq C_{v,\alpha}\tau^\alpha,
\end{equation}
where the integral is zero if $q=1$. Since $t_n-t\geq t_n/2$ for $0\leq t\leq t_q$, this part contributes at most $C_{v,\alpha}\tau^\alpha t_n^{-\alpha}$.

For the remaining intervals, notice that $t_{k-1}\geq t_q\geq t_n/3$. Therefore,
\begin{equation}
\|e_\tau(t)\|_\infty\leq C_{v,\alpha}\tau t_n^{\alpha-2},
\quad t_q<t<t_n.
\end{equation}
It follows that
\begin{equation}
\int_{t_q}^{t_n}(t_n-t)^{-\alpha}\|e_\tau(t)\|_\infty\,dt
\leq C_{v,\alpha}\frac{\tau}{t_n}
\leq C_{v,\alpha}\tau^\alpha t_n^{-\alpha},
\end{equation}
because $\tau\leq t_n$ and $0<\alpha<1$. Combining the two parts proves the claim.
\end{proof}
\begin{remark}
The estimate \eqref{eq:L1_weighted_truncation} does not require the first-step residual to tend to zero. Its contribution is controlled after multiplication by $t_1^\alpha=\tau^\alpha$. The bounds \eqref{eq:weak_time_regularity} are assumptions on the exact cumulative solution, and are not proved here for the continuous nonlinear problem.
\end{remark}

We also need consistency of the initial cumulative data. Recall that
\begin{equation}
V_j^0=h\sum_{k=-\infty}^{j}U_k^0
\end{equation}
is associated with the interface point $x_{j+\frac{1}{2}}$, while
\begin{equation}
v_j^0=v(x_{j+\frac{1}{2}},0)=\int_{-\infty}^{x_{j+\frac{1}{2}}}u_0(x)\,dx.
\end{equation}

\begin{lemma}[Consistency of the initial cumulative data]
\label{lem:initial_cumulative_consistency}
Assume that $U_j^0=u_0(x_j)$ and $u_0\in W^{1,1}(\mathbb{R})$. Then
\begin{equation}
	\|V^0-v^0\|_\infty\leq Ch\|u_0'\|_{L^1(\mathbb{R})}.
\end{equation}
In particular,
\begin{equation}
	\|V^0-v^0\|_\infty\rightarrow 0 \quad \text{as} \quad h\rightarrow 0.
\end{equation}
\end{lemma}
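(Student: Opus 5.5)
The idea is to write the error as a sum of cell-wise midpoint-rule errors and bound each one by the local variation of $u_0$. Fix $j$ and set
\begin{equation}
	V_j^0-v_j^0=\sum_{k=-\infty}^{j}\int_{x_{k-\frac12}}^{x_{k+\frac12}}\left(u_0(x_k)-u_0(x)\right)dx .
\end{equation}
This uses $v_j^0=\int_{-\infty}^{x_{j+\frac12}}u_0\,dx=\sum_{k\le j}\int_{x_{k-\frac12}}^{x_{k+\frac12}}u_0\,dx$, where the cells $[x_{k-\frac12},x_{k+\frac12}]$ tile $(-\infty,x_{j+\frac12}]$. The interchange of the sum with the integral is justified because $u_0\in L^1$.

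For the second step, take $u_0$ to be the absolutely continuous representative, which exists since $u_0\in W^{1,1}(\mathbb{R})$; then point values are meaningful. For $x$ in the $k$-th cell,
\begin{equation}
	|u_0(x_k)-u_0(x)|\le\int_{x_{k-\frac12}}^{x_{k+\frac12}}|u_0'(y)|\,dy .
\end{equation}
Integrating over the cell, which has length $h$, gives a bound of $h\int_{\text{cell}_k}|u_0'|$.

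Summing over $k\le j$ then yields
\begin{equation}
	|V_j^0-v_j^0|\le h\int_{-\infty}^{x_{j+\frac12}}|u_0'|\,dy\le h\|u_0'\|_{L^1(\mathbb{R})},
\end{equation}
uniformly in $j$. Taking the supremum over $j$ gives the claim with $C=1$, and the convergence $\|V^0-v^0\|_\infty\to0$ follows at once.

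The only delicate point is the justification in the first step: the series $h\sum_k U_k^0$ must converge, so that $V^0$ is well defined. This follows from the same cell estimate, since $h\sum_k|u_0(x_k)|\le\|u_0\|_{L^1}+h\|u_0'\|_{L^1}<\infty$. Beyond that, one must use the continuous representative of $u_0$ so that the point samples $U_k^0=u_0(x_k)$ make sense. Everything else is routine.
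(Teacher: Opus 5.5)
Your proof is correct and follows the paper's argument essentially line by line. You use the same cellwise decomposition of $V_j^0-v_j^0$, the same bound $|u_0(x_k)-u_0(x)|\le\int_{x_{k-\frac12}}^{x_{k+\frac12}}|u_0'(y)|\,dy$, and the same summation over $k\le j$ giving the constant $C=1$; your added remarks on the absolutely continuous representative and on the convergence of $h\sum_k U_k^0$ are valid details that the paper leaves implicit.
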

\begin{proof}
Notice that for every $j\in\mathbb{Z}$ we have
\begin{equation}
	V_j^0-v_j^0=\sum_{k=-\infty}^{j}\left(hu_0(x_k)-\int_{x_{k-\frac{1}{2}}}^{x_{k+\frac{1}{2}}}u_0(x)\,dx\right).
\end{equation}
Hence,
\begin{equation}
	|V_j^0-v_j^0|\leq\sum_{k=-\infty}^{j}\int_{x_{k-\frac{1}{2}}}^{x_{k+\frac{1}{2}}}|u_0(x_k)-u_0(x)|\,dx.
\end{equation}
Next, by the fundamental theorem of calculus,
\begin{equation}
	|u_0(x_k)-u_0(x)|\leq\int_{x_{k-\frac{1}{2}}}^{x_{k+\frac{1}{2}}}|u_0'(y)|\,dy
\end{equation}
for $x\in[x_{k-\frac{1}{2}},x_{k+\frac{1}{2}}]$. Therefore,
\begin{equation}
	|V_j^0-v_j^0|\leq h\sum_{k=-\infty}^{j}\int_{x_{k-\frac{1}{2}}}^{x_{k+\frac{1}{2}}}|u_0'(y)|\,dy\leq h\|u_0'\|_{L^1(\mathbb{R})}.
\end{equation}
Taking the supremum over $j$ completes the proof.
\end{proof}

\begin{remark}
If, instead, the initial values are defined by cell averages,
\begin{equation}
	U_j^0=\frac{1}{h}\int_{x_{j-\frac{1}{2}}}^{x_{j+\frac{1}{2}}}u_0(x)\,dx,
\end{equation}
then
\begin{equation}
	V_j^0=v(x_{j+\frac{1}{2}},0)
\end{equation}
holds exactly.
\end{remark}
Next, we recall a discrete comparison estimate for the L1 operator derived in \cite{delTeso2026Numerical}.
\begin{lemma}[Discrete comparison estimate for the L1 operator]
\label{lem:L1_scalar_comparison}
Let $\{y^n\}_{n=0}^N$ be a nonnegative sequence satisfying
\begin{equation}
	D_\tau^\alpha y^n\leq R, \quad n=1,\ldots,N,
\end{equation}
for some $R\geq0$. Then
\begin{equation}
	y^n\leq y^0+\frac{R}{\Gamma(1+\alpha)}t_n^\alpha, \quad n=0,\ldots,N.
\end{equation}
Consequently,
\begin{equation}
	y^n\leq y^0+C_{\alpha,T}R, \quad n=0,\ldots,N.
\end{equation}
\end{lemma}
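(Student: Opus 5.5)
Throughout, the idea is to compare $y^n$ with an explicit barrier: the sampled power function $w(t)=y^0+\frac{R}{\Gamma(1+\alpha)}t^\alpha$, whose continuous Caputo derivative is exactly $R$. We then use the discrete comparison principle for the L1 operator that is built into its convex-combination structure. To set this up, we rewrite the inequality $D_\tau^\alpha y^n\leq R$ in the resolvent form already used in Lemma~\ref{infstab}:
\begin{equation}
	y^n\leq\sum_{k=0}^{n-2}(a_k-a_{k+1})y^{n-1-k}+a_{n-1}y^0+\lambda_\alpha R,\quad \lambda_\alpha=\Gamma(2-\alpha)\tau^\alpha .
\end{equation}
The coefficients are non-negative and sum to one.

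The first step is to show that $w^n=w(t_n)$ is a discrete supersolution, i.e.\ $D_\tau^\alpha w^n\geq R$. Since the L1 formula is the Caputo derivative of the piecewise linear interpolant, we have
\begin{equation}
	D_\tau^\alpha w^n-\partial_t^\alpha w(t_n)=\frac{1}{\Gamma(1-\alpha)}\int_0^{t_n}(t_n-t)^{-\alpha}\bigl(\partial_t\Pi_\tau w-\partial_t w\bigr)\,dt .
\end{equation}
Because $t^\alpha$ is concave, the difference $\Pi_\tau w-w$ is non-positive and vanishes at the nodes. We integrate by parts on each subinterval against the kernel $(t_n-t)^{-\alpha}$, which is increasing in $t$. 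This yields
\begin{equation}
	\int_0^{t_n}(t_n-t)^{-\alpha}\,\partial_t(\Pi_\tau w-w)\,dt=-\alpha\int_0^{t_n}(t_n-t)^{-\alpha-1}(\Pi_\tau w-w)\,dt\geq0 .
\end{equation}
The boundary terms vanish at the nodes; near $t_n$ the singularity is handled because $\Pi_\tau w-w=O((t_n-t)^2)$ on the last interval, where $w$ is smooth. Together with $\partial_t^\alpha w=R$, this gives $D_\tau^\alpha w^n\geq R$. This is the step I expect to require the most care, namely justifying the integration by parts near $t=0$, where $w'$ is singular, and near $t_n$. If it proves awkward, the fallback is to verify $D_\tau^\alpha(t^\alpha)(t_n)\geq\Gamma(1+\alpha)$ directly from the known sums $\sum a_k\bigl((n-k)^\alpha-(n-k-1)^\alpha\bigr)$, or simply to cite \cite{delTeso2026Numerical}.

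The second step is comparison. Set $z^n=y^n-w^n$. By linearity, $D_\tau^\alpha z^n\leq0$, and $z^0=0$. The resolvent form then gives
\begin{equation}
	z^n\leq\sum_{k=0}^{n-2}(a_k-a_{k+1})z^{n-1-k}+a_{n-1}z^0 .
\end{equation}
An induction exactly as in Lemma~\ref{infstab} shows $z^n\leq0$ for all $n$: if $z^r\leq0$ for $r<n$, the right-hand side is a non-negative combination of non-positive numbers. This proves $y^n\leq y^0+Rt_n^\alpha/\Gamma(1+\alpha)$. The consequence follows from $t_n\leq T$, with $C_{\alpha,T}=T^\alpha/\Gamma(1+\alpha)$. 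Non-negativity of $y^n$ is not even needed beyond the statement.
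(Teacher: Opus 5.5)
Your proof is correct. It also supplies something the paper does not: the paper gives no proof of this lemma and simply recalls it from \cite{delTeso2026Numerical}. Your route is as follows.
\begin{itemize}
\item Take the barrier $w(t)=y^0+Rt^\alpha/\Gamma(1+\alpha)$, whose Caputo derivative is exactly $R$.
\item Represent the L1 formula as the Caputo derivative of the piecewise linear interpolant. This is the same representation the paper uses in Lemma~\ref{lem:L1_weighted_consistency}.
\item Integrate by parts against the increasing kernel $(t_n-t)^{-\alpha}$. Concavity of $w$ for $R\geq0$ then gives $D_\tau^\alpha w^n\geq R$.
\item Close with the convex-combination induction on $z^n=y^n-w^n$.
\end{itemize}

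One small correction concerns the last interval. There $\Pi_\tau w-w$ vanishes only to first order at $t_n$, not to second order. For $R>0$ its left derivative at $t_n$ is $(w^n-w^{n-1})/\tau-w'(t_n)>0$. First-order vanishing is all you need, though. The boundary term $(t_n-t)^{-\alpha}(\Pi_\tau w-w)=O((t_n-t)^{1-\alpha})$ tends to $0$. The integrand $(t_n-t)^{-\alpha-1}(\Pi_\tau w-w)=O((t_n-t)^{-\alpha})$ is integrable.

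The case $n=1$, where both singularities lie in $[0,\tau]$, needs a word. There $\Pi_\tau w-w=\frac{R}{\Gamma(1+\alpha)}(\tau^{\alpha-1}t-t^\alpha)$ is continuous, vanishes at both endpoints, and has an integrable derivative. So integrating by parts on $[\epsilon,\tau-\epsilon]$ and letting $\epsilon\to0$ is legitimate.

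It is worth seeing what the barrier buys compared with the max-index argument the paper uses in Lemma~\ref{lem:L1_weighted_comparison}. Applied with $r^n\equiv R$, that argument gives $y^n\leq y^0+\Gamma(1-\alpha)Rt_n^\alpha$. This suffices for the ``consequently'' part of the statement, but not for the stated constant, because $\Gamma(1-\alpha)\Gamma(1+\alpha)=\pi\alpha/\sin(\pi\alpha)>1$. Your supersolution argument recovers exactly the continuous constant $1/\Gamma(1+\alpha)$, so it proves the first inequality as stated.
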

For a time-dependent right-hand side, we shall use the following weighted version of the comparison estimate; see also Lemma~2.1 of \cite{Kopteva2019L1}. We include a proof for the uniform mesh used here.
\begin{lemma}[Weighted discrete comparison estimate]
\label{lem:L1_weighted_comparison}
Let $\{y^n\}_{n=0}^N$ be a real-valued sequence satisfying
\begin{equation}
D_\tau^\alpha y^n\leq r^n,\quad r^n\geq0,\quad n=1,\ldots,N.
\end{equation}
Then
\begin{equation}\label{eq:L1_weighted_comparison}
y^n\leq y^0+\Gamma(1-\alpha)
\max_{1\leq k\leq n}t_k^\alpha r^k,
\quad n=1,\ldots,N.
\end{equation}
\end{lemma}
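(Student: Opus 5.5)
We argue by comparison with an explicit barrier, so that only a lower bound on the L1 operator applied to $t_n^{-\alpha}$-type data is needed. Fix $n\geq1$ and set $M_n=\max_{1\leq k\leq n}t_k^\alpha r^k$, so that $r^k\leq M_n t_k^{-\alpha}$ for $k\leq n$. It suffices to show that $y^k\leq y^0+\Gamma(1-\alpha)M_n$ for $k=1,\ldots,n$. We rewrite the inequality in resolvent form, exactly as in Lemma~\ref{infstab}:
\begin{equation}
y^k\leq\sum_{\ell=0}^{k-2}(a_\ell-a_{\ell+1})y^{k-1-\ell}+a_{k-1}y^0+\lambda_\alpha r^k,\qquad \lambda_\alpha=\Gamma(2-\alpha)\tau^\alpha .
\end{equation}
The weights here are non-negative and sum to one.

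The key ingredient is a lower bound on the coefficient in front of $y^0$. We use
\begin{equation}
a_{k-1}=k^{1-\alpha}-(k-1)^{1-\alpha}\geq(1-\alpha)k^{-\alpha},
\end{equation}
which follows from the mean value theorem because $x^{-\alpha}$ is decreasing. Then
\begin{equation}
\lambda_\alpha r^k\leq\Gamma(2-\alpha)\tau^\alpha M_n t_k^{-\alpha}=\Gamma(2-\alpha)M_nk^{-\alpha}\leq\Gamma(1-\alpha)M_n\,a_{k-1},
\end{equation}
using $\Gamma(2-\alpha)=(1-\alpha)\Gamma(1-\alpha)$. Hence
\begin{equation}
y^k\leq\sum_{\ell=0}^{k-2}(a_\ell-a_{\ell+1})y^{k-1-\ell}+a_{k-1}\bigl(y^0+\Gamma(1-\alpha)M_n\bigr).
\end{equation}

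Now induct on $k$ with the hypothesis $y^r\leq Y:=y^0+\Gamma(1-\alpha)M_n$ for $1\leq r<k$. The case $y^0\leq Y$ is trivial since $M_n\geq0$. Every term on the right is a convex combination of quantities bounded by $Y$, so $y^k\leq Y$. The base case $k=1$ is the same computation with an empty sum and $a_0=1$. Taking $k=n$ gives \eqref{eq:L1_weighted_comparison}. No sign condition on $y$ is needed, since only the positivity of the weights is used.

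The only delicate step is the coefficient inequality $a_{k-1}\geq(1-\alpha)k^{-\alpha}$. It must be sharp enough to absorb the $\Gamma(2-\alpha)$ factor into exactly $\Gamma(1-\alpha)$, and the mean value theorem does this directly. Otherwise the argument is the same convex-combination induction used in Lemmas~\ref{infstab} and~\ref{lem:mass_conservation}.
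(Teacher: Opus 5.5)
Your proof is correct and rests on the same two ingredients as the paper's. These are the convex-combination form of the L1 operator (weights $a_\ell-a_{\ell+1}$ and $a_{k-1}$, summing to one) and the mean-value bound $a_{k-1}\geq(1-\alpha)k^{-\alpha}$, which gives $\Gamma(2-\alpha)k^{-\alpha}\leq\Gamma(1-\alpha)a_{k-1}$. The paper evaluates the scheme at an index $\ell$ where $y^\ell=\max_{0\leq k\leq n}y^k>y^0$, obtaining $D_\tau^\alpha y^\ell\geq\tau^{-\alpha}a_{\ell-1}(y^\ell-y^0)/\Gamma(2-\alpha)$ directly; your absorption of $\lambda_\alpha r^k$ into the weight of $y^0$ followed by forward induction is just an equivalent rearrangement of that maximum-principle argument.
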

\begin{proof}
Fix $n\geq1$. If $\max_{0\leq k\leq n}y^k\leq y^0$, the result is immediate. Otherwise, choose $\ell\in\{1,\ldots,n\}$ such that
\begin{equation}
y^\ell=\max_{0\leq k\leq n}y^k>y^0.
\end{equation}
Using the equivalent representation of the L1 operator, we obtain
\begin{equation}
\begin{aligned}
D_\tau^\alpha y^\ell
&=\frac{\tau^{-\alpha}}{\Gamma(2-\alpha)}
\left[y^\ell-\sum_{k=0}^{\ell-2}(a_k-a_{k+1})y^{\ell-1-k}-a_{\ell-1}y^0\right]\\
&\geq\frac{\tau^{-\alpha}a_{\ell-1}}{\Gamma(2-\alpha)}(y^\ell-y^0).
\end{aligned}
\end{equation}
Moreover,
\begin{equation}
a_{\ell-1}=(1-\alpha)\int_{\ell-1}^{\ell}r^{-\alpha}\,dr
\geq(1-\alpha)\ell^{-\alpha}.
\end{equation}
Hence,
\begin{equation}
r^\ell\geq D_\tau^\alpha y^\ell
\geq\frac{t_\ell^{-\alpha}}{\Gamma(1-\alpha)}(y^\ell-y^0).
\end{equation}
Since $y^n\leq y^\ell$, this proves \eqref{eq:L1_weighted_comparison}.
\end{proof}

\begin{theorem}[Convergence of the integrated implicit scheme]
\label{thm:integrated_convergence}
Assume that $m\geq2$ and that \eqref{eq:integrated_problem} has a bounded non-decreasing solution $v\in C([0,T];L^\infty(\mathbb{R}))$ satisfying the spatial assumptions of Lemma~\ref{lem:integrated_spatial_consistency} and the weighted temporal bound \eqref{eq:weighted_temporal_consistency}. Let $V^n$ be a bounded non-decreasing solution of \eqref{eq:integrated_discrete_scheme}, and assume that \eqref{eq:integrated_consistency} holds. Then
\begin{equation}
	\|V^n-v^n\|_\infty\leq\|V^0-v^0\|_\infty+C_{v,\alpha,T}(\eta_\tau+\eta_h), \quad n=1,\ldots,N,
\end{equation}
where $C_{v,\alpha,T}$ is independent of $h$ and $\tau$.

In particular, if $U_j^0=u_0(x_j)$ and $u_0\in W^{1,1}(\mathbb{R})$, then Lemma~\ref{lem:initial_cumulative_consistency} gives
\begin{equation}
	\|V^n-v^n\|_\infty\leq C\left(h+\eta_\tau+\eta_h\right), \quad n=1,\ldots,N,
\end{equation}
and therefore $V^n$ converges uniformly to the exact cumulative solution as $h,\tau\rightarrow 0$.

If, in addition, \eqref{eq:weak_time_regularity} holds, then Lemma~\ref{lem:L1_weighted_consistency} gives
\begin{equation}
\|V^n-v^n\|_\infty\leq C\left(h+\tau^\alpha+\eta_h\right),
\quad n=1,\ldots,N.
\end{equation}

\end{theorem}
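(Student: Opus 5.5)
The plan is to compare the discrete solution $V^n$ with the sampled exact solution $v^n$ through the resolvent form \eqref{eq:integrated_resolvent}. This reduces the problem to a scalar L1 inequality for $y^n=\|V^n-v^n\|_\infty$, which is then closed with the weighted comparison estimate of Lemma~\ref{lem:L1_weighted_comparison}.

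\textbf{Step 1: resolvent form for the exact solution.} By the definition of the residual, $D_\tau^\alpha v_j^n=\mathcal{L}_h[v^n]_j+R_j^n$. Multiplying by $\lambda_\alpha=\Gamma(2-\alpha)\tau^\alpha$ and rearranging the L1 sum exactly as for the scheme gives
\begin{equation}
v^n-\lambda_\alpha\mathcal{L}_h[v^n]=\widetilde H^n+\lambda_\alpha R^n,
\qquad
\widetilde H^n=\sum_{k=0}^{n-2}(a_k-a_{k+1})v^{n-1-k}+a_{n-1}v^0 .
\end{equation}
Since $v(\cdot,t_n)$ is bounded and non-decreasing, so is its sample $v^n$. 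Because $V^n$ is also bounded and non-decreasing, Lemma~\ref{lem:integrated_resolvent_stability} applies with right-hand sides $H^n$ and $K=\widetilde H^n+\lambda_\alpha R^n$. It yields
\begin{equation}
y^n\leq\|H^n-\widetilde H^n\|_\infty+\lambda_\alpha\|R^n\|_\infty
\leq\sum_{k=0}^{n-2}(a_k-a_{k+1})y^{n-1-k}+a_{n-1}y^0+\lambda_\alpha\|R^n\|_\infty ,
\end{equation}
where the second inequality uses the positivity of the convex weights $a_k-a_{k+1}$ and $a_{n-1}$.

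\textbf{Step 2: scalar L1 inequality.} Dividing by $\lambda_\alpha$ and reading the L1 formula backwards, the previous bound is exactly
\begin{equation}
D_\tau^\alpha y^n\leq r^n:=\|R^n\|_\infty,
\end{equation}
with $r^n\geq0$.

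\textbf{Step 3: weighted comparison.} Lemma~\ref{lem:L1_weighted_comparison} now gives
\begin{equation}
y^n\leq y^0+\Gamma(1-\alpha)\max_{1\leq k\leq n}t_k^\alpha r^k .
\end{equation}
By \eqref{eq:integrated_consistency}, $t_k^\alpha r^k\leq C_v(\eta_\tau+t_k^\alpha\eta_h)\leq C_v(\eta_\tau+T^\alpha\eta_h)$. This is the main estimate, with $C_{v,\alpha,T}=\Gamma(1-\alpha)C_v\max\{1,T^\alpha\}$.

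\textbf{Step 4: the two corollaries.} For $U_j^0=u_0(x_j)$ with $u_0\in W^{1,1}(\mathbb{R})$, insert the bound $y^0\leq Ch\|u_0'\|_{L^1(\mathbb{R})}$ from Lemma~\ref{lem:initial_cumulative_consistency}. Under \eqref{eq:weak_time_regularity}, Lemma~\ref{lem:L1_weighted_consistency} allows $\eta_\tau=\tau^\alpha$.

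\textbf{Main obstacle.} The step needing most care is Step 1, because Lemma~\ref{lem:integrated_resolvent_stability} requires both functions to be bounded and non-decreasing solutions of resolvent equations. The point is to treat the residual as part of the right-hand side $K$, so the lemma's hypotheses are met without any monotonicity requirement on $R^n$. The lemma also uses $\sum_{\beta\ne0} C_\beta(h)<\infty$ so that $\mathcal{L}_h[v^n]$ is well defined; this holds by the standing assumption for fixed $h$. The remaining steps are routine, and the weighted form of the residual is handled entirely by Lemma~\ref{lem:L1_weighted_comparison}, so the first step never needs an unweighted residual bound.
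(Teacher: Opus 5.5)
Your proposal is correct and follows the paper's proof essentially step for step. You write the sampled exact solution in resolvent form with $\lambda_\alpha R^n$ absorbed into the right-hand side, and apply the contraction estimate of Lemma~\ref{lem:integrated_resolvent_stability} to get $D_\tau^\alpha E^n\leq\|R^n\|_\infty$ for $E^n=\|V^n-v^n\|_\infty$. You then close with Lemma~\ref{lem:L1_weighted_comparison} and \eqref{eq:integrated_consistency}, and insert Lemmas~\ref{lem:initial_cumulative_consistency} and~\ref{lem:L1_weighted_consistency} for the two special cases, exactly as the paper does.
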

\begin{proof}
Note that the exact solution satisfies
\begin{equation}
	v_j^n-\lambda_\alpha\mathcal{L}_h[v^n]_j=\sum_{k=0}^{n-2}(a_k-a_{k+1})v_j^{n-1-k}+a_{n-1}v_j^0+\lambda_\alpha R_j^n.
\end{equation}
Applying the resolvent contraction estimate \eqref{eq:resolvent_contraction} to $V^n$ and $v^n$ gives
\begin{equation}
	E^n\leq\sum_{k=0}^{n-2}(a_k-a_{k+1})E^{n-1-k}+a_{n-1}E^0+\lambda_\alpha\|R^n\|_\infty,
\end{equation}
where $E^n=\|V^n-v^n\|_\infty$. Equivalently, we can write
\begin{equation}
	D_\tau^\alpha E^n\leq\|R^n\|_\infty.
\end{equation}
Lemma~\ref{lem:L1_weighted_comparison} and \eqref{eq:integrated_consistency} yield
\begin{equation}
\begin{aligned}
E^n&\leq E^0+\Gamma(1-\alpha)\max_{1\leq k\leq n}t_k^\alpha\|R^k\|_\infty\\
&\leq E^0+C_{v,\alpha}\left(\eta_\tau+t_n^\alpha\eta_h\right)
\leq E^0+C_{v,\alpha,T}(\eta_\tau+\eta_h).
\end{aligned}
\end{equation}
The estimate for sampled initial data follows from Lemma~\ref{lem:initial_cumulative_consistency}. Finally, under \eqref{eq:weak_time_regularity}, Lemma~\ref{lem:L1_weighted_consistency} allows us to take $\eta_\tau=\tau^\alpha$. This proves the result.
\end{proof}
We can immediately obtain the weak-convergence.
\begin{corollary}[Weak convergence of the density]
\label{cor:density_weak_convergence}
Assume the hypotheses of Theorem~\ref{thm:integrated_convergence} and Lemma~\ref{lem:initial_cumulative_consistency}. Let $n=n(h,\tau)$ be a sequence of time indices such that
\begin{equation}
	t_{n(h,\tau)}\rightarrow t\in[0,T]
\end{equation}
as $h,\tau\rightarrow0$. Then, for every $\varphi\in C_c^1(\mathbb{R})$,
\begin{equation}
	h\sum_{j\in\mathbb{Z}}U_j^{n(h,\tau)}\varphi(x_j)\rightarrow\int_{\mathbb{R}}u(x,t)\varphi(x)\,dx.
\end{equation}
\end{corollary}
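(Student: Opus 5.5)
The plan is discrete summation by parts, which moves the discrete derivative off $V$ and onto the test function, followed by the uniform convergence of Theorem~\ref{thm:integrated_convergence}.

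Fix $\varphi\in C_c^1(\mathbb{R})$ with support in $[-L,L]$, and write $n=n(h,\tau)$. Since $U_j^n=(V_j^n-V_{j-1}^n)/h$, we have
\begin{equation}
h\sum_{j}U_j^n\varphi(x_j)=\sum_j (V_j^n-V_{j-1}^n)\varphi(x_j)=-\sum_j V_j^n\bigl(\varphi(x_{j+1})-\varphi(x_j)\bigr).
\end{equation}
All sums are finite because $\varphi$ has compact support, so no boundary terms arise. Define $\psi_j=(\varphi(x_{j+1})-\varphi(x_j))/h$. Then $\psi_j=\varphi'(x_{j+\frac12})+o(1)$ uniformly in $j$, since $\varphi'$ is uniformly continuous. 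The left-hand side therefore equals $-h\sum_j V_j^n\psi_j$, a Riemann-type sum on the interface points $x_{j+\frac12}$.

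Next we replace $V^n_j$ by $v(x_{j+\frac12},t_n)$. Only $O(L/h)$ indices contribute, so this costs at most
\begin{equation}
h\sum_{|x_j|\le L+h}|\psi_j|\,\|V^n-v^n\|_\infty\le C_\varphi\|V^n-v^n\|_\infty.
\end{equation}
By Theorem~\ref{thm:integrated_convergence} together with Lemma~\ref{lem:initial_cumulative_consistency}, this is bounded by $C(h+\eta_\tau+\eta_h)\to0$.

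It remains to show
\begin{equation}
-h\sum_j v(x_{j+\frac12},t_n)\psi_j\to-\int v(x,t)\varphi'(x)\,dx=\int u(x,t)\varphi(x)\,dx,
\end{equation}
where the last equality is integration by parts with $\partial_x v=u$, using compact support of $\varphi$. We split the error into three pieces. First, $|v(x_{j+\frac12},t_n)-v(x_{j+\frac12},t)|\le\|v(t_n)-v(t)\|_\infty\to0$ by $v\in C([0,T];L^\infty)$. Second, replacing $\psi_j$ by $\varphi'(x_{j+\frac12})$ costs $o(1)$ times $\|v\|_\infty$ times the bounded length $2L+2h$. Third, $h\sum_j v(x_{j+\frac12},t)\varphi'(x_{j+\frac12})$ converges to the integral because the integrand is a bounded monotone function times a continuous compactly supported function. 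Such a product is Riemann integrable, and the midpoint sums converge; one can bound the error by $h\,\|\varphi'\|_\infty\,\mathrm{TV}(v(\cdot,t))+\|v\|_\infty\, h\cdot 2L\,\omega_{\varphi'}(h)$. Since $v$ is bounded and non-decreasing, $\mathrm{TV}(v(\cdot,t))\le 2\|v\|_\infty$.

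The main point requiring care is the time mismatch $t_n\ne t$. This is exactly where continuity of $v$ into $L^\infty$ is needed; uniform convergence at grid times alone would not be enough. The step is otherwise routine.
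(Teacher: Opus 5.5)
Your proof is correct and follows the same route as the paper. You use discrete summation by parts onto $\varphi$, control $V^n-v^n$ in $\ell^\infty$ via Theorem~\ref{thm:integrated_convergence} and Lemma~\ref{lem:initial_cumulative_consistency}, and pass to the limit in the midpoint sum using continuity of $v$ in time; your total-variation bound just makes explicit what the paper calls consistency of the midpoint Riemann sum, though the $\omega_{\varphi'}$ term should read $\|v\|_\infty(2L+2h)\,\omega_{\varphi'}(h)$ without the extra factor $h$ (it still tends to zero).
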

\begin{proof}
Since $U_j^n=D_h^-V_j^n$, discrete summation by parts gives
\begin{equation}
	h\sum_{j\in\mathbb{Z}}U_j^n\varphi(x_j)=-\sum_{j\in\mathbb{Z}}V_j^n\left(\varphi(x_{j+1})-\varphi(x_j)\right).
\end{equation}
Let $n=n(h,\tau)$ and write
\begin{equation}
	-\sum_jV_j^n\left(\varphi(x_{j+1})-\varphi(x_j)\right)=-\sum_jv_j^n\left(\varphi(x_{j+1})-\varphi(x_j)\right)-\sum_j(V_j^n-v_j^n)\left(\varphi(x_{j+1})-\varphi(x_j)\right).
\end{equation}
Notice that for the second term, we have
\begin{equation}
	\left|\sum_j(V_j^n-v_j^n)\left(\varphi(x_{j+1})-\varphi(x_j)\right)\right|\leq\|V^n-v^n\|_\infty\sum_j|\varphi(x_{j+1})-\varphi(x_j)|.
\end{equation}
But $\varphi\in C_c^1(\mathbb{R})$, so that
\begin{equation}
	\sum_j|\varphi(x_{j+1})-\varphi(x_j)|\leq\|\varphi'\|_{L^1(\mathbb{R})}+o(1),
\end{equation}
and Theorem~\ref{thm:integrated_convergence} together with Lemma~\ref{lem:initial_cumulative_consistency} implies that this term converges to zero.

For the first term, recall that $v_j^n=v(x_{j+\frac{1}{2}},t_n)$. Hence,
\begin{equation}
	-\sum_jv_j^n\left(\varphi(x_{j+1})-\varphi(x_j)\right)\rightarrow-\int_{\mathbb{R}}v(x,t)\varphi'(x)\,dx
\end{equation}
by the consistency of the midpoint Riemann sum and the continuity of $v$ in time. Finally, since $\partial_xv=u$,
\begin{equation}
	-\int_{\mathbb{R}}v(x,t)\varphi'(x)\,dx=\int_{\mathbb{R}}u(x,t)\varphi(x)\,dx.
\end{equation}
This concludes the proof. 
\end{proof}

\begin{remark}[Higher-dimensional convergence]
We would like to mention that the above convergence argument is specific to one space dimension, since it relies on the cumulative variable $v(x,t)=\int_{-\infty}^x u(y,t)\,dy$ and on the resulting monotone formulation of the integrated equation. In dimensions $d>1$, there is no direct scalar analog of this transformation that yields a closed equation with the same comparison structure.

Nevertheless, the multidimensional scheme developed in this paper has several estimates that may provide a basis for a full multidimensional convergence analysis. In particular, positivity, mass conservation, the uniform $\ell^\infty$ and $\ell^2$ bounds, and the discrete energy-dissipation estimate of Lemma~\ref{lem:energy_dissipation} are available in arbitrary dimension. This could lead to subsequential convergence of the scheme to a weak solution of the multidimensional problem.
\end{remark}

\subsection{Convergence of the explicit scheme}
We now consider the explicit version of the scheme given by
\begin{equation}\label{eq:explicit_scheme}
	D_\tau^\alpha U_j^n=-(\mathcal{N}_hU^{n-1})_j, \quad n\geq1.
\end{equation}
Equivalently, after rearranging the L1 approximation of the Caputo derivative, we obtain
\begin{equation}
	U_j^n=\sum_{k=0}^{n-2}(a_k-a_{k+1})U_j^{n-1-k}+a_{n-1}U_j^0-\Gamma(2-\alpha)\tau^\alpha(\mathcal{N}_hU^{n-1})_j.
\end{equation}
As before, let
\begin{equation}
	\mu_\alpha=\Gamma(2-\alpha)\tau^\alpha, \quad b_k=a_k-a_{k+1}, \quad k\geq0,
\end{equation}
and note that $b_k>0$ and
\begin{equation}
	\sum_{k=0}^{n-2}b_k+a_{n-1}=1.
\end{equation}

\begin{proposition}[Mass conservation of the explicit scheme]
\label{prop:explicit_mass}
Assume that $U^n\in\ell^1(\mathbb{Z}^d)$ is a non-negative solution of \eqref{eq:explicit_scheme}. Suppose additionally that, for all $n=0,\ldots,N-1$ and $i=1,\ldots,d$,
\begin{equation}
	\sum_{j\in\mathbb{Z}^d}\left|F_{j+\frac12e_i}^{(i)}(U^n)\right|<\infty.
\end{equation}
Then
\begin{equation}
	h^d\sum_{j\in\mathbb{Z}^d}U_j^n=h^d\sum_{j\in\mathbb{Z}^d}U_j^0, \quad n=0,\ldots,N.
\end{equation}
\end{proposition}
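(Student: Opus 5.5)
The plan mirrors the proof of Lemma~\ref{lem:mass_conservation}, with the only change being that the spatial operator is now evaluated at the previous time level. Set $M^n=h^d\sum_{j\in\mathbb{Z}^d}U_j^n$, which is finite since $U^n\in\ell^1(\mathbb{Z}^d)$. First, for each $n=1,\ldots,N$, I sum the explicit scheme \eqref{eq:explicit_scheme} over $j\in\mathbb{Z}^d$ and multiply by $h^d$. On the left-hand side the L1 operator is a finite linear combination in time of $\ell^1$ grid functions, so the spatial sum commutes with $D_\tau^\alpha$ and gives $D_\tau^\alpha M^n$.

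On the right-hand side I use the conservative form
\begin{equation}
	(\mathcal{N}_hU^{n-1})_j=\frac{1}{h}\sum_{i=1}^d\left(F^{(i)}_{j+\frac12e_i}(U^{n-1})-F^{(i)}_{j-\frac12e_i}(U^{n-1})\right).
\end{equation}
By hypothesis the fluxes are absolutely summable for the index $n-1\in\{0,\ldots,N-1\}$, which is exactly the range assumed. Hence the two sums $\sum_jF^{(i)}_{j\pm\frac12e_i}(U^{n-1})$ may be split and evaluated separately. They coincide after the index shift $j\mapsto j+e_i$, so
\begin{equation}
	\sum_{j\in\mathbb{Z}^d}(\mathcal{N}_hU^{n-1})_j=0,
\end{equation}
and therefore $D_\tau^\alpha M^n=0$ for $n=1,\ldots,N$. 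The only point needing care is the justification of splitting the telescoping sum, and absolute summability of the fluxes supplies exactly that. This is also why the assumption is imposed at levels $0,\ldots,N-1$ rather than $1,\ldots,N$ as in the implicit case.

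Finally, I rewrite $D_\tau^\alpha M^n=0$ in the rearranged form
\begin{equation}
	M^n=\sum_{k=0}^{n-2}b_kM^{n-1-k}+a_{n-1}M^0,
\end{equation}
with $b_k>0$ and $\sum_{k=0}^{n-2}b_k+a_{n-1}=1$. Induction on $n$ then gives the result. For $n=1$ the identity reads $M^1=M^0$. If $M^r=M^0$ for all $r<n$, then $M^n$ is a convex combination of copies of $M^0$, so $M^n=M^0$.

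I expect no genuine obstacle. Non-negativity is not needed for the identity itself; it only matters if one wants to restate the conclusion as conservation of $\|U^n\|_{1,h}$.
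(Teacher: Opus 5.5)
Your proposal is correct and follows essentially the same route as the paper: sum the explicit scheme over $j$, use absolute summability of the fluxes at level $n-1$ to telescope the conservative divergence to zero, obtain $D_\tau^\alpha M^n=0$, and conclude $M^n=M^0$ by induction on the convex-combination form of the L1 recursion. You spell out details the paper leaves implicit, namely why the flux hypothesis is needed only on levels $0,\ldots,N-1$ and that non-negativity is not used in the identity itself.
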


\begin{proof}
Let
\begin{equation}
	M^n=h^d\sum_{j\in\mathbb{Z}^d}U_j^n.
\end{equation}
Summing \eqref{eq:explicit_scheme} over $j\in\mathbb{Z}^d$ and using the absolute summability of the fluxes gives
\begin{equation}
	D_\tau^\alpha M^n=0.
\end{equation}
Equivalently,
\begin{equation}
	M^n=\sum_{k=0}^{n-2}(a_k-a_{k+1})M^{n-1-k}+a_{n-1}M^0.
\end{equation}
Since
\begin{equation}
	\sum_{k=0}^{n-2}(a_k-a_{k+1})+a_{n-1}=1,
\end{equation}
an induction in $n$ yields $M^n=M^0$.
\end{proof}

The explicit scheme is well-defined in an arbitrary dimension. However, for the rigorous convergence analysis below, we again restrict ourselves to $d=1$ and use the integrated formulation introduced in the preceding subsection. Thus,
\begin{equation}
	V_j^n=h\sum_{k=-\infty}^{j}U_k^n,
\end{equation}
where $V_j^n$ is associated with the interface point $x_{j+\frac12}$. Summing \eqref{eq:explicit_scheme} over the indices $k\leq j$ and using the vanishing of the flux at $-\infty$, we obtain
\begin{equation}\label{eq:explicit_integrated_scheme}
	D_\tau^\alpha V_j^n=\mathcal{L}_h[V^{n-1}]_j.
\end{equation}
Now, for $n\geq2$, the L1 formula allows us to rewrite \eqref{eq:explicit_integrated_scheme} as
\begin{equation}\label{eq:explicit_integrated_update}
	V_j^n=\mathcal{S}_h[V^{n-1}]_j+\sum_{k=1}^{n-2}b_kV_j^{n-1-k}+a_{n-1}V_j^0,
\end{equation}
where
\begin{equation}\label{eq:explicit_map}
	\mathcal{S}_h[V]_j=b_0V_j+\mu_\alpha\mathcal{L}_h[V]_j.
\end{equation}
In the first time step, we instead have
\begin{equation}
	V_j^1=\mathcal{S}_{h,0}[V^0]_j, \quad \mathcal{S}_{h,0}[V]_j=V_j+\mu_\alpha\mathcal{L}_h[V]_j.
\end{equation}

We now derive a sufficient CFL condition directly in terms of the coefficients of the fractional discrete Laplacian. Recall that
\begin{equation}
	(\mathcal{A}_hV)_j=\sum_{\beta\neq0}C_\beta(h)(V_j-V_{j+\beta}),
\end{equation}
where
\begin{equation}
	C_\beta(h)=\frac{c_\beta(h)}{h}\geq0.
\end{equation}
For the semigroup discretization, Proposition~\ref{prop:semigroup_fractional_laplacian} gives
\begin{equation}\label{eq:Cbeta_decay_explicit}
	C_\beta(h)\leq C_sh^{-2s}|\beta|^{-1-2s}, \quad \beta\neq0,
\end{equation}
where $s=1-\sigma$. In particular,
\begin{equation}\label{eq:Cbeta_sum_explicit}
	\Lambda_h:=\sum_{\beta\neq0}C_\beta(h)\leq C_sh^{-2s}.
\end{equation}
Next, for a bounded sequence $V$, let
\begin{equation}
	\operatorname{osc}(V)=\sup_{j\in\mathbb{Z}}V_j-\inf_{j\in\mathbb{Z}}V_j.
\end{equation}
and for $B,M>0$, define
\begin{equation}
	\mathcal{K}_{B,M}=\left\{V\in\ell^\infty(\mathbb{Z}):\operatorname{osc}(V)\leq B,\ 0\leq D_h^+V_j\leq M \text{ for all }j\in\mathbb{Z}\right\}.
\end{equation}

\begin{lemma}[Bound for the discrete fractional Laplacian]
\label{lem:Ah_bound_explicit}
Let $V\in\mathcal{K}_{B,M}$ and $0<h<1/2$. Then
\begin{equation}
	\|\mathcal{A}_hV\|_\infty\leq Q_s(h;B,M),
\end{equation}
where
\begin{equation}\label{eq:Qs_explicit}
	Q_s(h;B,M)=C_s
	\begin{cases}
		B+M, & 0<s<\frac12,\\
		B+M(1+|\log h|), & s=\frac12,\\
		B+Mh^{1-2s}, & \frac12<s<1.
	\end{cases}
\end{equation}
\end{lemma}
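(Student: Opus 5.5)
We bound $(\mathcal{A}_hV)_j=\sum_{\beta\neq0}C_\beta(h)(V_j-V_{j+\beta})$ by splitting the sum into near and far interactions at a cutoff $|\beta|\leq K$, with $K=\lfloor 1/h\rfloor$, so that $hK\approx1$. The only inputs are the decay bound \eqref{eq:Cbeta_decay_explicit}, $C_\beta(h)\leq C_sh^{-2s}|\beta|^{-1-2s}$, and two elementary estimates on $V\in\mathcal{K}_{B,M}$. First, $|V_j-V_{j+\beta}|\leq \operatorname{osc}(V)\leq B$ for every $\beta$. Second, $|V_j-V_{j+\beta}|\leq |\beta|hM$, obtained by telescoping the forward differences $0\leq D_h^+V\leq M$.

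For the far part $|\beta|>K$ we use the oscillation bound:
\begin{equation}
\sum_{|\beta|>K}C_\beta(h)|V_j-V_{j+\beta}|\leq 2C_sBh^{-2s}\sum_{\beta>K}\beta^{-1-2s}\leq C_sBh^{-2s}K^{-2s}\leq C_sB,
\end{equation}
since $hK\geq 1/2$ for $0<h<1/2$.

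For the near part $1\leq|\beta|\leq K$ we use the Lipschitz bound:
\begin{equation}
\sum_{1\leq|\beta|\leq K}C_\beta(h)|V_j-V_{j+\beta}|\leq 2C_sMh^{1-2s}\sum_{\beta=1}^{K}\beta^{-2s}.
\end{equation}
The three regimes of $Q_s$ come from evaluating this partial sum.
\begin{itemize}
\item For $s<\tfrac12$, the sum is $\lesssim K^{1-2s}\lesssim h^{2s-1}$, which gives $C_sM$.
\item For $s=\tfrac12$, it is $\lesssim 1+\log K\lesssim 1+|\log h|$, which gives $C_sM(1+|\log h|)$.
\item For $s>\tfrac12$, it is bounded by $\zeta(2s)$, which leaves $C_sMh^{1-2s}$.
\end{itemize}
Adding the near and far estimates and absorbing constants into $C_s$ yields $|(\mathcal{A}_hV)_j|\leq Q_s(h;B,M)$ uniformly in $j$.

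The main point needing care is choosing the cutoff so that the two parts balance. With $hK\sim1$, the far part is independent of $h$ and the near part scales exactly as stated. In the case $s>\tfrac12$ one could take $K=\infty$ outright, since the bound $Mh^{1-2s}$ already controls everything, but keeping the split unifies the argument. One must also check that the hypothesis $h<1/2$ guarantees $K\geq2$, so that $\log K\sim|\log h|$ and $hK\geq1/2$ hold. Absolute summability of the series, needed to justify the splitting, follows from \eqref{eq:Cbeta_sum_explicit} together with boundedness of $V$.
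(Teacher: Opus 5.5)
Your proposal is correct and follows the paper's proof: it uses $|V_j-V_{j+\beta}|\le\min\{Mh|\beta|,B\}$ together with the decay bound $C_\beta(h)\le C_sh^{-2s}|\beta|^{-1-2s}$, splits the sum at $|\beta|\approx h^{-1}$, and evaluates $h^{1-2s}\sum_{1\le\beta\le h^{-1}}\beta^{-2s}$ in the same three regimes. Your one addition is the explicit check that $hK>1/2$ for $K=\lfloor 1/h\rfloor$ and $h<1/2$, which makes the far-field constant a little more transparent than in the paper.
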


\begin{proof}
Since $0\leq D_h^+V_j\leq M$ and $\operatorname{osc}(V)\leq B$, we have
\begin{equation}
	|V_j-V_{j+\beta}|\leq\min\{Mh|\beta|,B\}.
\end{equation}
Therefore, by \eqref{eq:Cbeta_decay_explicit},
\begin{equation}
	|(\mathcal{A}_hV)_j|\leq C_sh^{-2s}\sum_{\beta\neq0}|\beta|^{-1-2s}\min\{Mh|\beta|,B\}.
\end{equation}
Now, we can split the sum into parts $1\leq|\beta|\leq h^{-1}$ and $|\beta|>h^{-1}$. For the first part, we have
\begin{equation}
	h^{-2s}\sum_{1\leq|\beta|\leq h^{-1}}|\beta|^{-1-2s}Mh|\beta|\leq C_sMh^{1-2s}\sum_{1\leq\beta\leq h^{-1}}\beta^{-2s}.
\end{equation}
Hence,
\begin{equation}
	h^{1-2s}\sum_{1\leq\beta\leq h^{-1}}\beta^{-2s}\leq C_s
	\begin{cases}
		1, & 0<s<\frac12,\\
		1+|\log h|, & s=\frac12,\\
		h^{1-2s}, & \frac12<s<1.
	\end{cases}
\end{equation}
For the remaining indices, it follows that
\begin{equation}
	h^{-2s}\sum_{|\beta|>h^{-1}}|\beta|^{-1-2s}B\leq C_sB.
\end{equation}
Combining these estimates proves the result.
\end{proof}

\begin{lemma}[Monotonicity of the explicit scheme]
\label{lem:explicit_monotonicity}
Assume that $m\geq2$ and let
\begin{equation}\label{eq:Kh_explicit}
	K_h=(m-1)M^{m-2}\frac{Q_s(h;B,M)}{h}+M^{m-1}\Lambda_h.
\end{equation}
If
\begin{equation}\label{eq:explicit_CFL}
	\mu_\alpha K_h\leq b_0,
\end{equation}
then $\mathcal{S}_h$ and $\mathcal{S}_{h,0}$ are order-preserving on $\mathcal{K}_{B,M}$.
\end{lemma}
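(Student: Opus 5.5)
To show that $\mathcal{S}_h$ is order-preserving on $\mathcal{K}_{B,M}$, we will show that the map $V\mapsto\mathcal{S}_h[V]_j$ is non-decreasing in every argument $V_k$, at every point of the convex set $\mathcal{K}_{B,M}$. Then $V\leq W$ in $\mathcal{K}_{B,M}$ gives $\mathcal{S}_h[V]\leq\mathcal{S}_h[W]$. To pass from partial derivatives to this conclusion, we integrate along the segment $W_\theta=V+\theta(W-V)$, which stays in $\mathcal{K}_{B,M}$ by convexity.

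The maps $r\mapsto r^\pm$ are only Lipschitz, so we avoid differentiating them. Instead we write
\[
\mathcal{S}_h[W]_j-\mathcal{S}_h[V]_j=b_0(W_j-V_j)+\mu_\alpha\bigl(\mathcal{L}_h[W]_j-\mathcal{L}_h[V]_j\bigr)
\]
and split the difference of $\mathcal{L}_h$ into two groups of terms. In the first group, only the slopes change: $-\bigl[(D_h^-W_j)^{m-1}-(D_h^-V_j)^{m-1}\bigr](\mathcal{A}_hW_j)^+$ and the analogous term with $D_h^+$ and $(\mathcal{A}_hW_j)^-$. In the second group, only the fractional term changes: $-(D_h^-V_j)^{m-1}\bigl[(\mathcal{A}_hW_j)^+-(\mathcal{A}_hV_j)^+\bigr]$ and its $D_h^+$ analogue.

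For the first group, take $m\geq2$ and slopes in $[0,M]$. By the mean value theorem, $(D_h^\pm W_j)^{m-1}-(D_h^\pm V_j)^{m-1}=\xi^\pm\,D_h^\pm(W-V)_j$ with $0\leq\xi^\pm\leq(m-1)M^{m-2}$. Set $\delta=W-V\geq0$. The term $D_h^-\delta_j=(\delta_j-\delta_{j-1})/h$ is multiplied by $-(\mathcal{A}_hW_j)^+\leq0$. Its contribution is therefore at least $-\xi^-(\mathcal{A}_hW_j)^+\delta_j/h$, since the $\delta_{j-1}$ part enters with a positive coefficient. Likewise, $D_h^+\delta_j=(\delta_{j+1}-\delta_j)/h$ is multiplied by $-(\mathcal{A}_hW_j)^-\geq0$. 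This gives at least $-\xi^+|(\mathcal{A}_hW_j)^-|\,\delta_j/h$, with the $\delta_{j+1}$ part again positive. Only one of $(\mathcal{A}_hW_j)^\pm$ is nonzero, and Lemma~\ref{lem:Ah_bound_explicit} bounds it by $Q_s$. Hence the first group is bounded below by $-(m-1)M^{m-2}\frac{Q_s}{h}\delta_j$.

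For the second group, $r\mapsto r^\pm$ is non-decreasing and $1$-Lipschitz. We also have
\[
\mathcal{A}_hW_j-\mathcal{A}_hV_j=\Lambda_h\delta_j-\sum_{\beta\neq0}C_\beta\delta_{j+\beta}\leq\Lambda_h\delta_j.
\]
Since the coefficient $-(D_h^\mp V_j)^{m-1}$ is non-positive, each term is at least $-(D_h^\mp V_j)^{m-1}\Lambda_h\delta_j\geq-M^{m-1}\Lambda_h\delta_j$. Only one of $(\cdot)^+$ and $(\cdot)^-$ can change sign along the comparison, so a case split according to the signs of $\mathcal{A}_hV_j$ and $\mathcal{A}_hW_j$ bounds the total by $M^{m-1}\Lambda_h\delta_j$, not twice that. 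The neighbor contributions $C_\beta\delta_{j+\beta}$ come with non-negative coefficients and can be dropped.

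Adding the two groups gives $\mathcal{S}_h[W]_j-\mathcal{S}_h[V]_j\geq(b_0-\mu_\alpha K_h)\delta_j\geq0$ by \eqref{eq:explicit_CFL}. For $\mathcal{S}_{h,0}$ the diagonal coefficient is $1\geq b_0$, so the same bound holds. The main obstacle is the careful sign bookkeeping in the second group. There $(\mathcal{A}_hW_j)^+$ may increase or decrease while the upwind slope switches between $D_h^-$ and $D_h^+$. I would organize it by the three sign cases of Lemma~\ref{lem:integrated_monotonicity}: both signs non-negative, both non-positive, and mixed. In each case I would check that the diagonal coefficient lost is at most $M^{m-1}\Lambda_h$.
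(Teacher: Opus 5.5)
Your argument is correct, but it reaches the conclusion by a different route from the paper.

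\textbf{The paper's route.} It computes the partial derivatives of $\mathcal{L}_h[V]_j$ separately where $\mathcal{A}_hV_j>0$ and where $\mathcal{A}_hV_j<0$. It checks that the off-diagonal entries are non-negative and the diagonal one is at least $-K_h$. It then uses continuity across $\mathcal{A}_hV_j=0$, convexity of $\mathcal{K}_{B,M}$, and integration along the segment joining two ordered elements.

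\textbf{Your route.} You compare the endpoints $V\leq W$ directly. The mean value theorem is applied only to the smooth scalar map $r\mapsto r^{m-1}$ on $[0,M]$. For $r\mapsto r^\pm$ you use only monotonicity and the $1$-Lipschitz property. This sidesteps the kink at $\mathcal{A}_hV_j=0$ and any differentiation through the infinite sum defining $\mathcal{A}_h$. It also needs only $V,W\in\mathcal{K}_{B,M}$, not convexity of $\mathcal{K}_{B,M}$. In particular, the segment integration announced in your first paragraph is never used and can be dropped.

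\textbf{What each buys.} The paper's route gives an explicit Jacobian, which shows the monotone-scheme structure at a glance. Yours is more elementary and handles the nonsmoothness without a separate continuity argument.

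\textbf{The step you left as a sketch.} The factor $M^{m-1}\Lambda_h$, rather than $2M^{m-1}\Lambda_h$, in the second group needs no case split. With $a=\mathcal{A}_hV_j$, put $p=(a+\Lambda_h\delta_j)^+-a^+\geq0$ and $q=(a+\Lambda_h\delta_j)^--a^-\geq0$. Since $\mathcal{A}_hW_j\leq a+\Lambda_h\delta_j$, monotonicity gives $(\mathcal{A}_hW_j)^+-a^+\leq p$ and $(\mathcal{A}_hW_j)^--a^-\leq q$. Also $p+q=\Lambda_h\delta_j$. Hence the second group is at least $-M^{m-1}(p+q)=-M^{m-1}\Lambda_h\delta_j$.

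If you do split cases, do not copy the three cases of Lemma~\ref{lem:integrated_monotonicity}. There, $\mathcal{A}_h\phi_j\geq\mathcal{A}_h\psi_j$ held because the functions touched at $j$. Here $\delta_j$ need not vanish, so $\mathcal{A}_hW_j-\mathcal{A}_hV_j$ has no sign and both mixed orderings occur. In the case $\mathcal{A}_hW_j<0<\mathcal{A}_hV_j$, the second group is in fact non-negative.
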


\begin{proof}
Let
\begin{equation}
	A_j=(\mathcal{A}_hV)_j,
\end{equation}
and note that if $A_j>0$, then
\begin{equation}
	\mathcal{L}_h[V]_j=-(D_h^-V_j)^{m-1}A_j.
\end{equation}
Observe that all off-diagonal partial derivatives of $\mathcal{L}_h[V]_j$ are non-negative. Indeed, if $\beta\notin\{0,-1\}$, then
\begin{equation}
	\frac{\partial\mathcal{L}_h[V]_j}{\partial V_{j+\beta}}=(D_h^-V_j)^{m-1}C_\beta(h)\geq0,
\end{equation}
while for $\beta=-1$,
\begin{equation}
	\frac{\partial\mathcal{L}_h[V]_j}{\partial V_{j-1}}=\frac{m-1}{h}(D_h^-V_j)^{m-2}A_j+(D_h^-V_j)^{m-1}C_{-1}(h)\geq0.
\end{equation}
The diagonal derivative satisfies
\begin{equation}
	-\frac{\partial\mathcal{L}_h[V]_j}{\partial V_j}=\frac{m-1}{h}(D_h^-V_j)^{m-2}A_j+(D_h^-V_j)^{m-1}\Lambda_h\leq K_h.
\end{equation}
Now, if $A_j<0$, then
\begin{equation}
	\mathcal{L}_h[V]_j=-(D_h^+V_j)^{m-1}A_j.
\end{equation}
Again, we can see that all off-diagonal partial derivatives are non-negative. Moreover,
\begin{equation}
	-\frac{\partial\mathcal{L}_h[V]_j}{\partial V_j}=\frac{m-1}{h}(D_h^+V_j)^{m-2}|A_j|+(D_h^+V_j)^{m-1}\Lambda_h\leq K_h.
\end{equation}
By Lemma~\ref{lem:Ah_bound_explicit}, these estimates are uniform on $\mathcal{K}_{B,M}$. Consequently, \eqref{eq:explicit_CFL} implies
\begin{equation}
	\frac{\partial\mathcal{S}_h[V]_j}{\partial V_j}\geq b_0-\mu_\alpha K_h\geq0,
\end{equation}
and all off-diagonal derivatives are non-negative. Moreover,
\begin{equation}
	\frac{\partial\mathcal{S}_{h,0}[V]_j}{\partial V_j}\geq1-\mu_\alpha K_h\geq1-b_0>0.
\end{equation}
The operators are continuous across the set $A_j=0$, and $\mathcal{K}_{B,M}$ is convex. Hence, integration along the line segment that joins two ordered elements of $\mathcal{K}_{B,M}$ proves the order-preserving property. The proof is complete. 
\end{proof}

\begin{proposition}[Positivity and $\ell^\infty$ stability]
\label{prop:explicit_stability}
Assume that $m\geq2$ and
\begin{equation}
	0\leq U_j^0\leq M.
\end{equation}
Let
\begin{equation}
	V_j^0=h\sum_{k=-\infty}^{j}U_k^0
\end{equation}
and assume that
\begin{equation}
	\operatorname{osc}(V^0)\leq B.
\end{equation}
If the CFL condition \eqref{eq:explicit_CFL} holds, then
\begin{equation}
	\operatorname{osc}(V^n)\leq B, \quad 0\leq D_h^+V_j^n\leq M
\end{equation}
for all $j\in\mathbb{Z}$ and $n=0,\ldots,N$. Consequently,
\begin{equation}
	0\leq U_j^n\leq M.
\end{equation}
In particular,
\begin{equation}
	\|U^n\|_\infty\leq\|U^0\|_\infty.
\end{equation}
\end{proposition}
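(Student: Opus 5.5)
We argue by strong induction on $n$ that $V^n\in\mathcal{K}_{B,M}$, using the order-preserving property of Lemma~\ref{lem:explicit_monotonicity} together with the translation invariance $\mathcal{S}_h[V+c]=\mathcal{S}_h[V]+b_0c$ and $\mathcal{S}_{h,0}[V+c]=\mathcal{S}_{h,0}[V]+c$, which follows from $\mathcal{L}_h[V+c]=\mathcal{L}_h[V]$. The base case $n=0$ is the hypothesis: $D_h^+V_j^0=U_{j+1}^0\in[0,M]$ and $\operatorname{osc}(V^0)\leq B$. Throughout, $V^n$ is bounded because the updates \eqref{eq:explicit_integrated_update} are convex-type combinations plus a bounded term, since $\mathcal{A}_hV$ is bounded by Lemma~\ref{lem:Ah_bound_explicit}.

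\emph{Oscillation bound.} Assume $V^0,\dots,V^{n-1}\in\mathcal{K}_{B,M}$. Since $\mathcal{L}_h$ annihilates constants and all $V^k$ are non-decreasing with common limits $V_-=\lim_{j\to-\infty}V^k_j=0$ and $V_+=\lim_{j\to+\infty}V^k_j$ (the limit at $+\infty$ is the total mass, preserved by Proposition~\ref{prop:explicit_mass}), we compare with constants. The constant sequences $\underline{c}=\inf V^{n-1}$ and $\overline{c}=\sup V^{n-1}$ lie in $\mathcal{K}_{B,M}$ and satisfy $\underline{c}\leq V^{n-1}\leq\overline{c}$. Order preservation then gives $b_0\underline{c}\leq\mathcal{S}_h[V^{n-1}]\leq b_0\overline{c}$, and similarly for $\mathcal{S}_{h,0}$. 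Because $0=V_-$ and $V_+$ are the inf and sup of every $V^k$, adding the history terms in \eqref{eq:explicit_integrated_update} with weights summing to $1$ yields $V_-\leq V^n_j\leq V_+$, so $\operatorname{osc}(V^n)\leq V_+-V_-\leq B$. This argument also reproves mass conservation in integrated form.

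\emph{Gradient bound.} To control $D_h^+V^n$, we compare $V^{n-1}$ with its shift $\widetilde V_j=V^{n-1}_{j+1}$. Since $\mathcal{L}_h$ commutes with shifts, $\mathcal{S}_h[\widetilde V]_j=\mathcal{S}_h[V^{n-1}]_{j+1}$. Monotonicity gives $V^{n-1}\leq\widetilde V$, and $\widetilde V\in\mathcal{K}_{B,M}$, so $\mathcal{S}_h[V^{n-1}]_j\leq\mathcal{S}_h[V^{n-1}]_{j+1}$. The history terms are non-decreasing as well, hence $D_h^+V^n_j\geq0$. For the upper bound we use the shifted-plus-constant comparison $\widetilde V\leq V^{n-1}+Mh$, where $V^{n-1}+Mh\in\mathcal{K}_{B,M}$. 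This yields $\mathcal{S}_h[V^{n-1}]_{j+1}\leq\mathcal{S}_h[V^{n-1}]_j+b_0Mh$ (respectively $+Mh$ for $\mathcal{S}_{h,0}$). Adding the history terms, each of whose increments is at most $Mh$ by the induction hypothesis, with total weight $b_0+\sum_{k\geq1}b_k+a_{n-1}=1$, gives $D_h^+V^n_j\leq M$.

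\emph{Conclusion and main obstacle.} With $V^n\in\mathcal{K}_{B,M}$ established, $U^n_j=D_h^-V^n_j\in[0,M]$ follows at once, and choosing $M=\|U^0\|_\infty$ gives the $\ell^\infty$ bound. The main obstacle is that order preservation in Lemma~\ref{lem:explicit_monotonicity} holds only on $\mathcal{K}_{B,M}$, so every comparison function, including the shifts and constants above, must be checked to lie in this set. A second subtlety is that the oscillation step needs the limits at $\pm\infty$ to be shared across all time levels. We secure this through mass conservation and the decay $D_h^\pm V\to0$, which implies $\mathcal{L}_h[V]_j\to0$ as $|j|\to\infty$, just as in Lemma~\ref{lem:integrated_resolvent_stability}.
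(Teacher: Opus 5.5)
Your proposal is correct and follows the paper's proof. The oscillation bound comes from comparison with constants, the slope bound from comparing $V^{n-1}$ with $\mathcal{T}V^{n-1}$ and $V^{n-1}+Mh$, and in both steps the history terms are summed with weights adding to one. The one deviation is that you get the oscillation bound through limits at $\pm\infty$ shared across time levels, partly via Proposition~\ref{prop:explicit_mass}, whose $\ell^1$ and flux-summability hypotheses are not part of this statement; your decay argument does make that step sound, but it is unnecessary, since propagating $\inf V^0\leq V^k\leq\sup V^0$ by induction, as the paper does, already suffices. Your explicit joint induction on $V^n\in\mathcal{K}_{B,M}$ is in fact the ordering that is needed, because order preservation at step $n$ already uses the slope bound at step $n-1$.
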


\begin{proof}
We start by letting
\begin{equation}
	a=\inf_{j\in\mathbb{Z}}V_j^0, \quad b=\sup_{j\in\mathbb{Z}}V_j^0,
\end{equation}
which shows that $b-a\leq B$. Since constant sequences belong to $\mathcal{K}_{B,M}$ and $\mathcal{L}_h$ annihilates constants,
\begin{equation}
	\mathcal{S}_{h,0}[a]=a, \quad \mathcal{S}_{h,0}[b]=b.
\end{equation}
Because $a\leq V^0\leq b$, Lemma~\ref{lem:explicit_monotonicity} gives
\begin{equation}
	a\leq V^1\leq b.
\end{equation}
For $n\geq2$, we similarly use
\begin{equation}
	\mathcal{S}_h[a]=b_0a, \quad \mathcal{S}_h[b]=b_0b.
\end{equation}
Because
\begin{equation}
	b_0+\sum_{k=1}^{n-2}b_k+a_{n-1}=1,
\end{equation}
an induction in \eqref{eq:explicit_integrated_update} yields
\begin{equation}
	a\leq V^n\leq b.
\end{equation}
Thus,
\begin{equation}
	\operatorname{osc}(V^n)\leq B.
\end{equation}
It remains to control the discrete slopes. Let $\mathcal{T}$ denote the spatial shift
\begin{equation}
	(\mathcal{T}V)_j=V_{j+1}.
\end{equation}
We note that the initial assumption implies
\begin{equation}
	V^0\leq\mathcal{T}V^0\leq V^0+Mh.
\end{equation}
Using monotonicity, translation invariance and invariance of $\mathcal{L}_h$ under the addition of constants, we obtain
\begin{equation}
	V^1=\mathcal{S}_{h,0}[V^0]\leq\mathcal{T}V^1\leq\mathcal{S}_{h,0}[V^0+Mh]=V^1+Mh.
\end{equation}
Hence, we arrive at
\begin{equation}
	0\leq D_h^+V_j^1\leq M.
\end{equation}
Now, assume that the same estimate holds up to the time level $n-1$. Then
\begin{equation}
	\mathcal{S}_h[V^{n-1}]\leq\mathcal{S}_h[\mathcal{T}V^{n-1}]\leq\mathcal{S}_h[V^{n-1}+Mh]=\mathcal{S}_h[V^{n-1}]+b_0Mh.
\end{equation}
Applying the same argument to the history terms in \eqref{eq:explicit_integrated_update}, we obtain
\begin{equation}
	V^n\leq\mathcal{T}V^n\leq V^n+Mh\left(b_0+\sum_{k=1}^{n-2}b_k+a_{n-1}\right)=V^n+Mh.
\end{equation}
Therefore,
\begin{equation}
	0\leq D_h^+V_j^n\leq M.
\end{equation}
Since
\begin{equation}
	U_j^n=D_h^-V_j^n=D_h^+V_{j-1}^n,
\end{equation}
the result follows and the proof is complete. 
\end{proof}

\begin{remark}[CFL scaling]
\label{rem:explicit_CFL_scaling}
For fixed $B$ and $M$, Lemma~\ref{lem:Ah_bound_explicit}, \eqref{eq:Cbeta_sum_explicit} and \eqref{eq:Kh_explicit} imply
\begin{equation}
	K_h\leq C_{s,m,B,M}
	\begin{cases}
		h^{-1}, & 0<s<\frac12,\\
		h^{-1}(1+|\log h|), & s=\frac12,\\
		h^{-2s}, & \frac12<s<1.
	\end{cases}
\end{equation}
Hence, a sufficient form of the CFL condition is
\begin{equation}\label{eq:explicit_CFL_scaling}
	\tau^\alpha\leq C
	\begin{cases}
		h, & 0<s<\frac12,\\
		\dfrac{h}{1+|\log h|}, & s=\frac12,\\
		h^{2s}, & \frac12<s<1.
	\end{cases}
\end{equation}
Since $s=1-\sigma$, this can equivalently be written as
\begin{equation}
	\tau^\alpha\leq C
	\begin{cases}
		h^{2-2\sigma}, & 0<\sigma<\frac12,\\
		\dfrac{h}{1+|\log h|}, & \sigma=\frac12,\\
		h, & \frac12<\sigma<1.
	\end{cases}
\end{equation}
Thus, the explicit stability restriction exhibits fractional-parabolic scaling for $\sigma<1/2$ and hyperbolic scaling for $\sigma>1/2$, with the usual logarithmic correction at the critical value $\sigma=1/2$.
\end{remark}

\begin{lemma}[Contractivity of the explicit scheme]
\label{lem:explicit_contraction}
Assume the hypotheses of Lemma~\ref{lem:explicit_monotonicity}. Then, for every $V,W\in\mathcal{K}_{B,M}$, we have
\begin{equation}
	\|\mathcal{S}_h[V]-\mathcal{S}_h[W]\|_\infty\leq b_0\|V-W\|_\infty.
\end{equation}
Moreover, the following holds
\begin{equation}
	\|\mathcal{S}_{h,0}[V]-\mathcal{S}_{h,0}[W]\|_\infty\leq\|V-W\|_\infty.
\end{equation}
\end{lemma}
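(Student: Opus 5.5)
The plan is to derive contraction from the monotonicity established in Lemma~\ref{lem:explicit_monotonicity}, combined with how $\mathcal{S}_h$ and $\mathcal{S}_{h,0}$ act on constant shifts. First I record that $\mathcal{L}_h$ is invariant under the addition of constants, $\mathcal{L}_h[V+c]=\mathcal{L}_h[V]$. By \eqref{eq:explicit_map} this gives
\begin{equation}
\mathcal{S}_h[V+c]=\mathcal{S}_h[V]+b_0c,\qquad \mathcal{S}_{h,0}[V+c]=\mathcal{S}_{h,0}[V]+c
\end{equation}
for every constant $c$.

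Next, fix $V,W\in\mathcal{K}_{B,M}$ and put $\delta=\|V-W\|_\infty$, so that $W-\delta\leq V\leq W+\delta$. The only issue is that the comparison functions $W\pm\delta$ must still lie in $\mathcal{K}_{B,M}$. This holds because adding a constant changes neither the oscillation nor the discrete slopes $D_h^+$. Applying the order-preserving property of $\mathcal{S}_h$ on $\mathcal{K}_{B,M}$ from Lemma~\ref{lem:explicit_monotonicity} to the ordered pairs $(W-\delta,V)$ and $(V,W+\delta)$, and then using the shift identity, yields
\begin{equation}
\mathcal{S}_h[W]-b_0\delta\leq\mathcal{S}_h[V]\leq\mathcal{S}_h[W]+b_0\delta .
\end{equation}
Taking the supremum over $j$ gives the first claim. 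The same argument with $\mathcal{S}_{h,0}$, where the constant passes through with factor $1$, gives the second claim.

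The main point to check is that monotonicity, proved in Lemma~\ref{lem:explicit_monotonicity} by integrating along the segment joining two ordered elements, indeed applies to the pair $V\leq W+\delta$. Both endpoints lie in $\mathcal{K}_{B,M}$, and by convexity the whole segment does as well, so the uniform derivative bounds from that lemma hold along it. Apart from this verification, the proof consists only of these two short steps.
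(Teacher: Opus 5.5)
Your proof is correct and matches the paper's argument: set $\delta=\|V-W\|_\infty$, note that $\mathcal{K}_{B,M}$ is invariant under adding constants, apply the order-preserving property from Lemma~\ref{lem:explicit_monotonicity} to $V\leq W+\delta$, and use $\mathcal{S}_h[W+\delta]=\mathcal{S}_h[W]+b_0\delta$ (respectively $\mathcal{S}_{h,0}[W+\delta]=\mathcal{S}_{h,0}[W]+\delta$). The only difference is that you get the lower bound from the pair $(W-\delta,V)$ instead of interchanging $V$ and $W$, which is equivalent.
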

\begin{proof}
Let
\begin{equation}
	\delta=\|V-W\|_\infty,
\end{equation}
and notice that
\begin{equation}
	V\leq W+\delta, \quad W\leq V+\delta.
\end{equation}
Since addition of a constant changes neither the oscillation nor the discrete derivatives, both $V+\delta$ and $W+\delta$ belong to $\mathcal{K}_{B,M}$. By Lemma~\ref{lem:explicit_monotonicity}, we have
\begin{equation}
	\mathcal{S}_h[V]\leq\mathcal{S}_h[W+\delta]=\mathcal{S}_h[W]+b_0\delta.
\end{equation}
Next, by interchanging $V$ and $W$ we have
\begin{equation}
	\mathcal{S}_h[W]\leq\mathcal{S}_h[V]+b_0\delta,
\end{equation}
and thus,
\begin{equation}
	\|\mathcal{S}_h[V]-\mathcal{S}_h[W]\|_\infty\leq b_0\delta.
\end{equation}
The proof for $\mathcal{S}_{h,0}$ is identical, using
\begin{equation}
	\mathcal{S}_{h,0}[V+c]=\mathcal{S}_{h,0}[V]+c.
\end{equation}
Therefore, we have proved our claim. 
\end{proof}
We now consider the consistency of the explicit integrated scheme. As in the implicit case, let
\begin{equation}
	v_j^n=v(x_{j+\frac12},t_n).
\end{equation}
Since the spatial operator in \eqref{eq:explicit_integrated_scheme} is evaluated at the previous time level, it is natural to define the residual by
\begin{equation}\label{eq:explicit_residual}
	R_j^n=D_\tau^\alpha v_j^n-\mathcal{L}_h[v^{n-1}]_j.
\end{equation}
For the continuous operator $\mathcal Lv=-|\partial_xv|^{m-1}(-\Delta)^sv$, define the temporal modulus
\begin{equation}\label{eq:explicit_time_modulus}
\omega_v(\tau)=\sup_{\substack{0\leq t,r\leq T\\|t-r|\leq\tau}}
\|\mathcal Lv(\cdot,t)-\mathcal Lv(\cdot,r)\|_\infty.
\end{equation}
Here, we assume that $t\mapsto\mathcal Lv(\cdot,t)$ belongs to $C([0,T];L^\infty(\mathbb{R}))$, so that $\omega_v(\tau)\rightarrow0$ as $\tau\rightarrow0$.
\begin{lemma}[Consistency of the explicit integrated scheme]
\label{lem:explicit_consistency}
Assume that $m\geq2$ and that $v$ is a bounded non-decreasing solution of \eqref{eq:integrated_problem}, sufficiently smooth in space with the required spatial bounds uniform on $[0,T]$. Also, suppose that \eqref{eq:explicit_time_modulus} satisfies $\omega_v(\tau)\rightarrow0$ and that
\begin{equation}
	t_n^\alpha\|D_\tau^\alpha v^n-\partial_t^\alpha v^n\|_\infty\leq C_v\eta_\tau
\end{equation}
and
\begin{equation}
	\|\mathcal{A}_hv^n-(-\Delta)^sv^n\|_\infty\leq C_v\eta_{\mathcal A}(h),
\end{equation}
with the temporal bound holding for $n=1,\ldots,N$ and the spatial bound for $n=0,\ldots,N$. Then
\begin{equation}\label{eq:explicit_consistency}
	\|R^n\|_\infty\leq C_v\left(\eta_\tau t_n^{-\alpha}+\omega_v(\tau)+h+\eta_{\mathcal A}(h)\right).
\end{equation}
\end{lemma}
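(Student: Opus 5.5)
The residual in \eqref{eq:explicit_residual} splits into three pieces. Writing $\mathcal L v(\cdot,t)=-|\partial_xv|^{m-1}(-\Delta)^sv$ and using that $v$ solves \eqref{eq:integrated_problem}, i.e.\ $\partial_t^\alpha v^n=\mathcal Lv^n$ sampled at the interfaces $x_{j+\frac12}$, we write
\begin{equation}
	R_j^n=\left(D_\tau^\alpha v_j^n-\partial_t^\alpha v(x_{j+\frac12},t_n)\right)+\left(\mathcal Lv(x_{j+\frac12},t_n)-\mathcal Lv(x_{j+\frac12},t_{n-1})\right)+\left(\mathcal Lv(x_{j+\frac12},t_{n-1})-\mathcal{L}_h[v^{n-1}]_j\right).
\end{equation}
The three brackets are, respectively, the temporal L1 truncation error at level $n$, a time shift of the continuous operator, and the spatial consistency error at level $n-1$. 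Each is then bounded in $\|\cdot\|_\infty$ separately, and the triangle inequality gives \eqref{eq:explicit_consistency}.

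The first bracket is handled directly by the assumed weighted temporal bound. Dividing by $t_n^\alpha$, which is legitimate since $n\geq1$ gives $t_n>0$, we get $C_v\eta_\tau t_n^{-\alpha}$. For the second bracket we note that $|t_n-t_{n-1}|=\tau$ and both times lie in $[0,T]$. The definition \eqref{eq:explicit_time_modulus} then bounds it by $\omega_v(\tau)$ uniformly in $j$. Since the supremum is taken over all of $\mathbb R$, sampling at the interface points costs nothing.

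For the third bracket we apply Lemma~\ref{lem:integrated_spatial_consistency} at time level $n-1$ instead of $n$. This is the reason the spatial hypothesis $\|\mathcal{A}_hv^n-(-\Delta)^sv^n\|_\infty\leq C_v\eta_{\mathcal A}(h)$ is assumed for $n=0,\ldots,N$, because level $n-1=0$ occurs when $n=1$. The proof of that lemma goes through unchanged at $t_{n-1}$: the Taylor bounds on $D_h^\pm v^{n-1}$, the Lipschitz property of $r\mapsto r^{m-1}$ for $m\geq2$, and the uniform boundedness of $\mathcal A_hv^{n-1}$. It yields $\|\mathcal{L}_h[v^{n-1}]-\mathcal Lv^{n-1}\|_\infty\leq C_v(h+\eta_{\mathcal A}(h))$.

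The only point needing care is exactly this use at $t_0=0$. The spatial smoothness bounds, namely bounded $\partial_{xx}v$ for the Taylor step and bounded $\partial_xv$, must hold uniformly on the closed interval $[0,T]$, including $t=0$. The hypotheses state they are uniform on $[0,T]$, so this is covered, and the constant $C_v$ is independent of $n$. Collecting the three bounds proves \eqref{eq:explicit_consistency}.
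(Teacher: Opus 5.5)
Your proof is correct and is essentially the paper's argument. The paper writes $R^n$ as the L1 truncation error plus $\mathcal{L}v^n-\mathcal{L}_h[v^{n-1}]$, then adds and subtracts $\mathcal{L}_h[v^n]$ and $\mathcal{L}v^n$; these extra terms cancel, so it invokes Lemma~\ref{lem:integrated_spatial_consistency} at both levels $n$ and $n-1$. Your three-term splitting needs that lemma only at level $n-1$, and otherwise uses the same ingredients: the weighted temporal bound, $\omega_v(\tau)$, and the spatial consistency estimate.
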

\begin{proof}
For convenience, we define the following operator
\begin{equation}
	\mathcal{L}v=-|\partial_xv|^{m-1}(-\Delta)^sv.
\end{equation}
Since $v$ satisfies \eqref{eq:integrated_problem}, we have
\begin{equation}
	\partial_t^\alpha v=\mathcal{L}v,
\end{equation}
and therefore,
\begin{equation}
	R^n=\left(D_\tau^\alpha v^n-\partial_t^\alpha v^n\right)+\left(\mathcal{L}v^n-\mathcal{L}_h[v^{n-1}]\right).
\end{equation}
Now, we can decompose the second term as
\begin{equation}
	\mathcal{L}v^n-\mathcal{L}_h[v^{n-1}]=\left(\mathcal{L}v^n-\mathcal{L}_h[v^n]\right)+\left(\mathcal{L}_h[v^n]-\mathcal{L}_h[v^{n-1}]\right).
\end{equation}
To estimate the last difference, we write
\begin{equation}
	\mathcal{L}_h[v^n]-\mathcal{L}_h[v^{n-1}]=\left(\mathcal{L}_h[v^n]-\mathcal{L}v^n\right)+\left(\mathcal{L}v^n-\mathcal{L}v^{n-1}\right)+\left(\mathcal{L}v^{n-1}-\mathcal{L}_h[v^{n-1}]\right).
\end{equation}
By Lemma~\ref{lem:integrated_spatial_consistency}, we also have
\begin{equation}
	\|\mathcal{L}_h[v^k]-\mathcal{L}v^k\|_\infty\leq C_v\left(h+\eta_{\mathcal A}(h)\right), \quad k=n-1,n.
\end{equation}
Moreover, the definition of $\omega_v$ gives
\begin{equation}
	\|\mathcal{L}v^n-\mathcal{L}v^{n-1}\|_\infty\leq\omega_v(\tau),
\end{equation}
and consequently, we can obtain
\begin{equation}
	\|\mathcal{L}_h[v^n]-\mathcal{L}_h[v^{n-1}]\|_\infty\leq C_v\left(\omega_v(\tau)+h+\eta_{\mathcal A}(h)\right).
\end{equation}
Combining the preceding estimates yields
\begin{equation}
	\|R^n\|_\infty\leq C_v\left(\eta_\tau t_n^{-\alpha}+\omega_v(\tau)+h+\eta_{\mathcal A}(h)\right),
\end{equation}
which proves the claim and concludes the proof. 
\end{proof}
At this point, we have gathered all ingredients needed for the main result of this part - the proof of convergence. For convenience, let
\begin{equation}
	\eta_h=h+\eta_{\mathcal A}(h).
\end{equation}
\begin{theorem}[Convergence of the explicit integrated scheme]
\label{thm:explicit_convergence}
Assume that $m\geq2$ and that \eqref{eq:integrated_problem} admits a bounded non-decreasing solution $v\in C([0,T];L^\infty(\mathbb{R}))$ satisfying the hypotheses of Lemma~\ref{lem:explicit_consistency}, with $\eta_\tau\rightarrow0$, $\eta_{\mathcal A}(h)\rightarrow0$, and $\omega_v(\tau)\rightarrow0$. Assume that
\begin{equation}
	0\leq u(x,t)\leq M, \quad (x,t)\in\mathbb{R}\times[0,T],
\end{equation}
and that the numerical initial data satisfy
\begin{equation}
	0\leq U_j^0\leq M.
\end{equation}
Let $V^n$ be generated by \eqref{eq:explicit_integrated_scheme}, and suppose that there exists $B>0$ such that
\begin{equation}
	\operatorname{osc}(V^0)\leq B, \quad \operatorname{osc}(v^n)\leq B
\end{equation}
for $n=0,\ldots,N$, and that the CFL condition
\begin{equation}
	\mu_\alpha K_h\leq b_0,
\end{equation}
with $K_h$ defined by \eqref{eq:Kh_explicit}, holds. Then
\begin{equation}
	\|V^n-v^n\|_\infty\leq\|V^0-v^0\|_\infty+C_{v,\alpha,T}\left(\eta_\tau+\omega_v(\tau)+\eta_h\right), \quad n=1,\ldots,N.
\end{equation}
In particular, if $U_j^0=u_0(x_j)$ and $u_0\in W^{1,1}(\mathbb{R})$, then
\begin{equation}
	\|V^n-v^n\|_\infty\leq C\left(h+\eta_\tau+\omega_v(\tau)+\eta_h\right).
\end{equation}
If, in addition, \eqref{eq:weak_time_regularity} holds and $\omega_v(\tau)\leq C_v\tau^\alpha$, then
\begin{equation}
\|V^n-v^n\|_\infty\leq C\left(h+\tau^\alpha+\eta_h\right).
\end{equation}
The bound on $\omega_v$ is an additional time-regularity assumption on the continuous spatial operator.
\end{theorem}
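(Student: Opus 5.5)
The plan mirrors the proof of Theorem~\ref{thm:integrated_convergence}, with the resolvent contraction replaced by the contractivity of the explicit map, Lemma~\ref{lem:explicit_contraction}.

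\emph{Step 1: both sequences lie in $\mathcal{K}_{B,M}$.} By Proposition~\ref{prop:explicit_stability}, every $V^n$ belongs to $\mathcal{K}_{B,M}$. For the sampled exact solution, the assumption $\operatorname{osc}(v^n)\leq B$ holds directly. Monotonicity of $v$ together with $0\leq u\leq M$ gives
\begin{equation}
0\leq D_h^+v_j^n=\frac1h\int_{x_{j+\frac12}}^{x_{j+\frac32}}u(x,t_n)\,dx\leq M,
\end{equation}
so $v^n\in\mathcal{K}_{B,M}$ as well.

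\emph{Step 2: error recursion.} By the definition \eqref{eq:explicit_residual} of the residual, the exact solution satisfies the perturbed update
\begin{equation}
v^n=\mathcal{S}_h[v^{n-1}]+\sum_{k=1}^{n-2}b_kv^{n-1-k}+a_{n-1}v^0+\mu_\alpha R^n
\end{equation}
for $n\geq2$. For $n=1$ it satisfies $v^1=\mathcal{S}_{h,0}[v^0]+\mu_\alpha R^1$. Subtract these from \eqref{eq:explicit_integrated_update} and apply Lemma~\ref{lem:explicit_contraction}. With $E^n=\|V^n-v^n\|_\infty$ this yields
\begin{equation}
E^n\leq b_0E^{n-1}+\sum_{k=1}^{n-2}b_kE^{n-1-k}+a_{n-1}E^0+\mu_\alpha\|R^n\|_\infty,
\end{equation}
and the case $n=1$ reads $E^1\leq E^0+\mu_\alpha\|R^1\|_\infty$. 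Since $\mu_\alpha=\Gamma(2-\alpha)\tau^\alpha$, this is exactly $D_\tau^\alpha E^n\leq\|R^n\|_\infty$ for $n=1,\ldots,N$.

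\emph{Step 3: apply the weighted comparison.} Apply Lemma~\ref{lem:L1_weighted_comparison} with $r^n=\|R^n\|_\infty$, and bound $r^n$ by Lemma~\ref{lem:explicit_consistency}. Since $t_k^\alpha\leq T^\alpha$, the weighted term is controlled as
\begin{equation}
t_k^\alpha\|R^k\|_\infty\leq C_v\left(\eta_\tau+T^\alpha(\omega_v(\tau)+\eta_h)\right).
\end{equation}
This gives the main estimate. The remaining claims follow directly:
\begin{itemize}
\item[(a)] For sampled initial data $U_j^0=u_0(x_j)$ with $u_0\in W^{1,1}(\mathbb{R})$, Lemma~\ref{lem:initial_cumulative_consistency} gives $E^0\leq Ch$.
\item[(b)] Under \eqref{eq:weak_time_regularity}, Lemma~\ref{lem:L1_weighted_consistency} allows $\eta_\tau=\tau^\alpha$.
\item[(c)] The additional assumption $\omega_v(\tau)\leq C_v\tau^\alpha$ absorbs the modulus into the $\tau^\alpha$ term.
\end{itemize}

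\emph{Main obstacle.} The delicate point is Step 1, namely that contractivity may only be used inside $\mathcal{K}_{B,M}$, so the exact sampled solution must lie there. This is where the hypotheses $u\leq M$ and $\operatorname{osc}(v^n)\leq B$ are needed. A second subtlety is that the history terms enter the recursion linearly with non-negative weights $b_k$ and $a_{n-1}$, which sum to one together with $b_0$. This is what produces the L1 structure of the inequality, so that the weighted comparison lemma applies even though the residual is evaluated at time level $n-1$.
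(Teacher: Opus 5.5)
Your proposal is correct and follows the paper's proof essentially step for step: you place both $V^n$ and $v^n$ in $\mathcal{K}_{B,M}$, use the contraction of $\mathcal{S}_{h,0}$ and $\mathcal{S}_h$ to recover $D_\tau^\alpha E^n\leq\|R^n\|_\infty$, and then combine Lemma~\ref{lem:L1_weighted_comparison} with Lemma~\ref{lem:explicit_consistency}. Your cell-average formula for $D_h^+v_j^n$ is a slightly more explicit justification of the slope bound than the paper's one-line remark.
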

\begin{proof}
Start by letting
\begin{equation}
	E^n=\|V^n-v^n\|_\infty.
\end{equation}
By Proposition~\ref{prop:explicit_stability}, we have
\begin{equation}
	V^n\in\mathcal{K}_{B,M}
\end{equation}
for every $n$. Moreover, since $0\leq\partial_xv=u\leq M$ and $\operatorname{osc}(v^n)\leq B$, the sampled exact solution also satisfies
\begin{equation}
	v^n\in\mathcal{K}_{B,M}.
\end{equation}
In the first time step, the numerical solution satisfies
\begin{equation}
	V^1=\mathcal{S}_{h,0}[V^0].
\end{equation}
On the other hand, by \eqref{eq:explicit_residual}, we have
\begin{equation}
	v^1=\mathcal{S}_{h,0}[v^0]+\mu_\alpha R^1.
\end{equation}
Therefore, Lemma~\ref{lem:explicit_contraction} gives
\begin{equation}
	E^1\leq E^0+\mu_\alpha\|R^1\|_\infty.
\end{equation}
Also, observe that for $n\geq2$, the exact solution satisfies
\begin{equation}
	v_j^n=\mathcal{S}_h[v^{n-1}]_j+\sum_{k=1}^{n-2}b_kv_j^{n-1-k}+a_{n-1}v_j^0+\mu_\alpha R_j^n.
\end{equation}
Next, subtracting this identity from \eqref{eq:explicit_integrated_update}, taking the $\ell^\infty$ norm, and applying Lemma~\ref{lem:explicit_contraction}, we obtain
\begin{equation}
	E^n\leq b_0E^{n-1}+\sum_{k=1}^{n-2}b_kE^{n-1-k}+a_{n-1}E^0+\mu_\alpha\|R^n\|_\infty.
\end{equation}
But together with the estimate for $n=1$, this is equivalent to
\begin{equation}
	D_\tau^\alpha E^n\leq\|R^n\|_\infty.
\end{equation}
Lemma~\ref{lem:L1_weighted_comparison} and Lemma~\ref{lem:explicit_consistency} yield
\begin{equation}
\begin{aligned}
E^n&\leq E^0+\Gamma(1-\alpha)\max_{1\leq k\leq n}t_k^\alpha\|R^k\|_\infty\\
&\leq E^0+C_{v,\alpha,T}\left(\eta_\tau+\omega_v(\tau)+\eta_h\right).
\end{aligned}
\end{equation}
Finally, Lemma~\ref{lem:initial_cumulative_consistency} gives
\begin{equation}
	E^0\leq Ch
\end{equation}
when $U_j^0=u_0(x_j)$ and $u_0\in W^{1,1}(\mathbb{R})$. Under the additional assumptions, Lemma~\ref{lem:L1_weighted_consistency} gives $\eta_\tau=\tau^\alpha$, and the bound on $\omega_v$ yields the last estimate. This completes the proof.
\end{proof}
\begin{corollary}[Weak convergence of the explicit density scheme]
\label{cor:explicit_density_convergence}
Assume the hypotheses of Theorem~\ref{thm:explicit_convergence} and Lemma~\ref{lem:initial_cumulative_consistency}, and let $n=n(h,\tau)$ satisfy
\begin{equation}
	t_{n(h,\tau)}\rightarrow t\in[0,T]
\end{equation}
as $h,\tau\rightarrow0$. Then, for every $\varphi\in C_c^1(\mathbb{R})$,
\begin{equation}
	h\sum_{j\in\mathbb{Z}}U_j^{n(h,\tau)}\varphi(x_j)\rightarrow\int_{\mathbb{R}}u(x,t)\varphi(x)\,dx.
\end{equation}
\end{corollary}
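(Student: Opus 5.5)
The plan is to repeat the argument of Corollary~\ref{cor:density_weak_convergence} word for word, with Theorem~\ref{thm:explicit_convergence} supplying the uniform convergence of the cumulative variable in place of Theorem~\ref{thm:integrated_convergence}. We first note that under the CFL condition, Proposition~\ref{prop:explicit_stability} gives $0\leq U_j^n\leq M$ and $\operatorname{osc}(V^n)\leq B$. Hence $V^n$ is bounded and non-decreasing, and the identity $U_j^n=D_h^-V_j^n$ holds exactly. Since $\varphi\in C_c^1(\mathbb{R})$ has compact support, discrete summation by parts then involves only finitely many nonzero terms, so there are no boundary contributions at $\pm\infty$:
\begin{equation}
	h\sum_{j\in\mathbb{Z}}U_j^n\varphi(x_j)=-\sum_{j\in\mathbb{Z}}V_j^n\left(\varphi(x_{j+1})-\varphi(x_j)\right).
\end{equation}

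Next, with $n=n(h,\tau)$, we split the right-hand side into the term with the sampled exact solution $v_j^n=v(x_{j+\frac12},t_n)$ and the error term with $V_j^n-v_j^n$. The error term is bounded by $\|V^n-v^n\|_\infty\sum_j|\varphi(x_{j+1})-\varphi(x_j)|$. The sum is at most $\|\varphi'\|_{L^1(\mathbb{R})}+o(1)$, since $\varphi(x_{j+1})-\varphi(x_j)=\int_{x_j}^{x_{j+1}}\varphi'$ in fact gives the bound $\|\varphi'\|_{L^1}$ directly. By Theorem~\ref{thm:explicit_convergence} together with Lemma~\ref{lem:initial_cumulative_consistency}, $\|V^n-v^n\|_\infty\leq C(h+\eta_\tau+\omega_v(\tau)+\eta_h)$ uniformly in $n$, and this tends to zero. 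We must make sure the bound is uniform in $n$ so that it applies along the arbitrary sequence $n(h,\tau)$; this is what the theorem provides.

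For the main term, we write it as $-\sum_j v(x_{j+\frac12},t_n)\,h\,\frac{\varphi(x_{j+1})-\varphi(x_j)}{h}$. We compare it first with $-\int v(x,t_n)\varphi'(x)\,dx$. Because $\varphi'$ is uniformly continuous with compact support and $v(\cdot,t_n)$ is bounded and non-decreasing, hence Riemann integrable on compacts, this midpoint-type sum converges uniformly in $n$. Then $t_n\to t$ and $v\in C([0,T];L^\infty(\mathbb{R}))$ give $\int v(x,t_n)\varphi'\to\int v(x,t)\varphi'$. Finally, $\partial_xv=u$ and integration by parts yield $\int u(x,t)\varphi(x)\,dx$.

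The step needing the most care is the uniformity of the Riemann-sum approximation with respect to the time level $n(h,\tau)$. We handle it by bounding $|v(x_{j+\frac12},t_n)-v(x,t_n)|\leq M h$ for $x$ in the cell $[x_j,x_{j+1}]$, using $0\leq\partial_xv=u\leq M$. Together with the uniform continuity of $\varphi'$, this bounds the discrepancy by $C_\varphi(Mh+B\,\omega_{\varphi'}(h))$, independently of $n$.
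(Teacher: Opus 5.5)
Your proposal is correct and follows the paper's proof, which simply repeats the summation-by-parts argument of Corollary~\ref{cor:density_weak_convergence} with Theorem~\ref{thm:explicit_convergence} supplying the bound on $\|V^n-v^n\|_\infty$, uniform in $n$. Your $n$-uniform Riemann-sum estimate via $0\leq\partial_xv=u\leq M$ fills in a detail the paper leaves implicit. Using the exact identity $\varphi(x_{j+1})-\varphi(x_j)=\int_{x_j}^{x_{j+1}}\varphi'$ there as well gives the bound $\tfrac{Mh}{2}\|\varphi'\|_{L^1}$ directly, so the modulus $\omega_{\varphi'}$ is not actually needed.
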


\begin{proof}
The proof is identical to that of Corollary~\ref{cor:density_weak_convergence}, using Theorem~\ref{thm:explicit_convergence} in place of Theorem~\ref{thm:integrated_convergence}.
\end{proof}

\begin{remark}[Higher-dimensional explicit scheme]
The explicit scheme \eqref{eq:explicit_scheme} is defined in an arbitrary dimension, and its conservative structure, in particular Proposition~\ref{prop:explicit_mass}, is not restricted to $d=1$. However, the convergence analysis above relies on the one-dimensional cumulative formulation and on the monotonicity of the corresponding integrated operator.

For $m>1$, a direct multidimensional argument based on order preservation of the density update is not available in general. Indeed, nonlocal velocity depends on $U$ through antisymmetric weights, while numerical flux contains spatially varying mobility $U^{m-1}$. Consequently, ordering two density grid functions does not in general induce the same ordering of their explicit spatial updates, and this obstruction is not removed by decreasing the time step.

A possible approach in dimensions $d>1$ would instead combine suitable a priori estimates with compactness. In particular, positivity and mass bounds, together with discrete energy estimates and control of the discrete Caputo derivative in an appropriate negative norm, could potentially yield subsequential convergence toward a weak solution. We leave this question for future work.
\end{remark}

\section{Numerical experiments}
In all numerical examples we consider only one spatial dimension $d=1$. The weighted temporal estimates above allow a weak singularity at the initial time. The resulting $O(\tau^\alpha)$ bound is uniform over all time levels under the stated assumptions; errors measured at a fixed positive time or after the first time step are different quantities.
\begin{example}[Linear convergence test]
    Let us first consider the simplest linear example, i.e., $m=1$. Then, we end up with linear space-time fractional diffusion equation in the form
    \begin{equation}
    \begin{split}
        \partial_t^\alpha u +(-\Delta)^{1-\sigma}u=0 \quad \text{ in } \mathbb{R}\times (0,T),\\
        u(\cdot, 0) = 1 + \cos(2x)/2\quad \text{ in } \mathbb{R}. 
    \end{split}
    \end{equation}
In such a case the solution is given by
\begin{equation}\label{exactsol}
u(x, t) = 1 + \frac{1}{2} E_\alpha(-4^{1-\sigma}t^{\alpha})\cos(2x).
\end{equation}
\begin{figure}[ht!]
\centering
\includegraphics[width=0.5\textwidth, height=0.25\textheight]{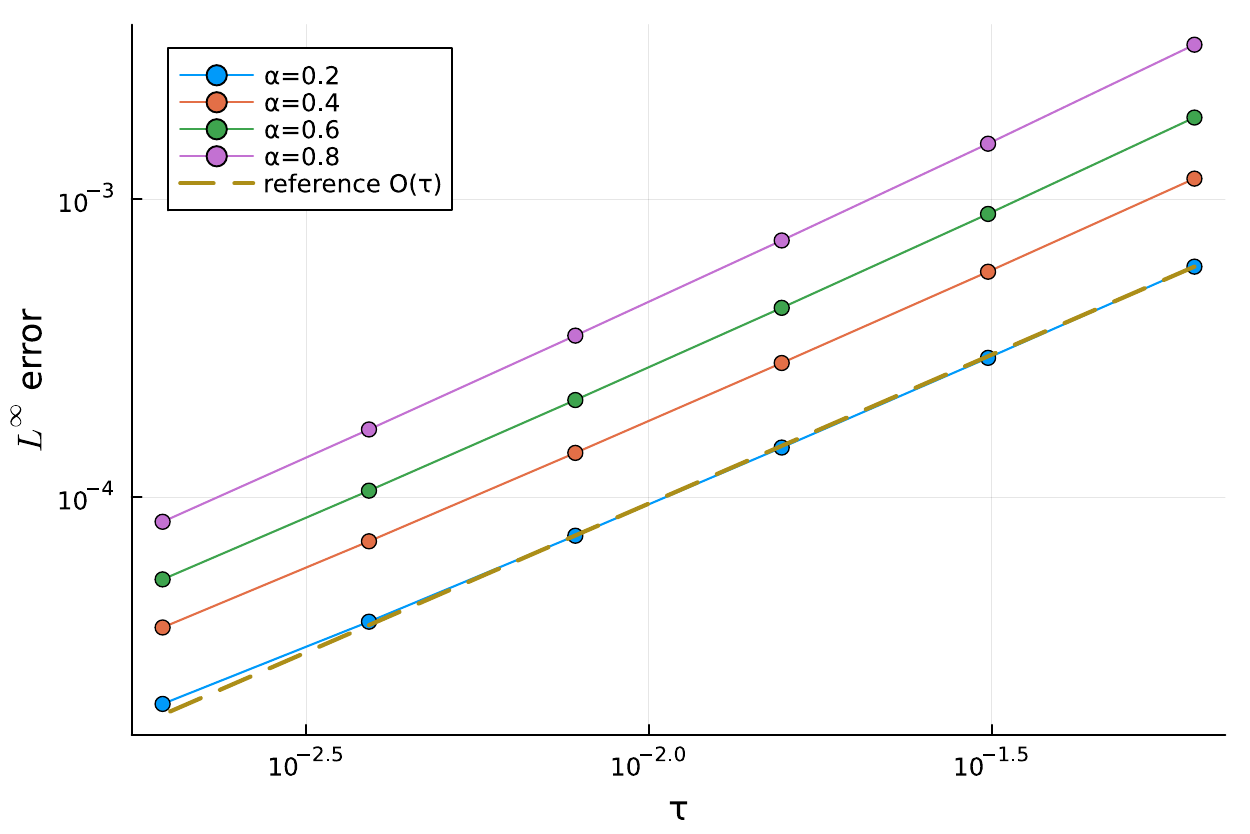}
\caption{Maximum in time error between numerical and analytical \eqref{exactsol} solution to \eqref{fpe} for $m=1$, $\sigma=0.25$, $h=10^{-2}$ and domain $[-\pi, \pi]$.}\label{fig::linearconvergenceendpoint}
\end{figure}
\begin{figure}[ht!]
\centering
\includegraphics[width=0.5\textwidth, height=0.25\textheight]{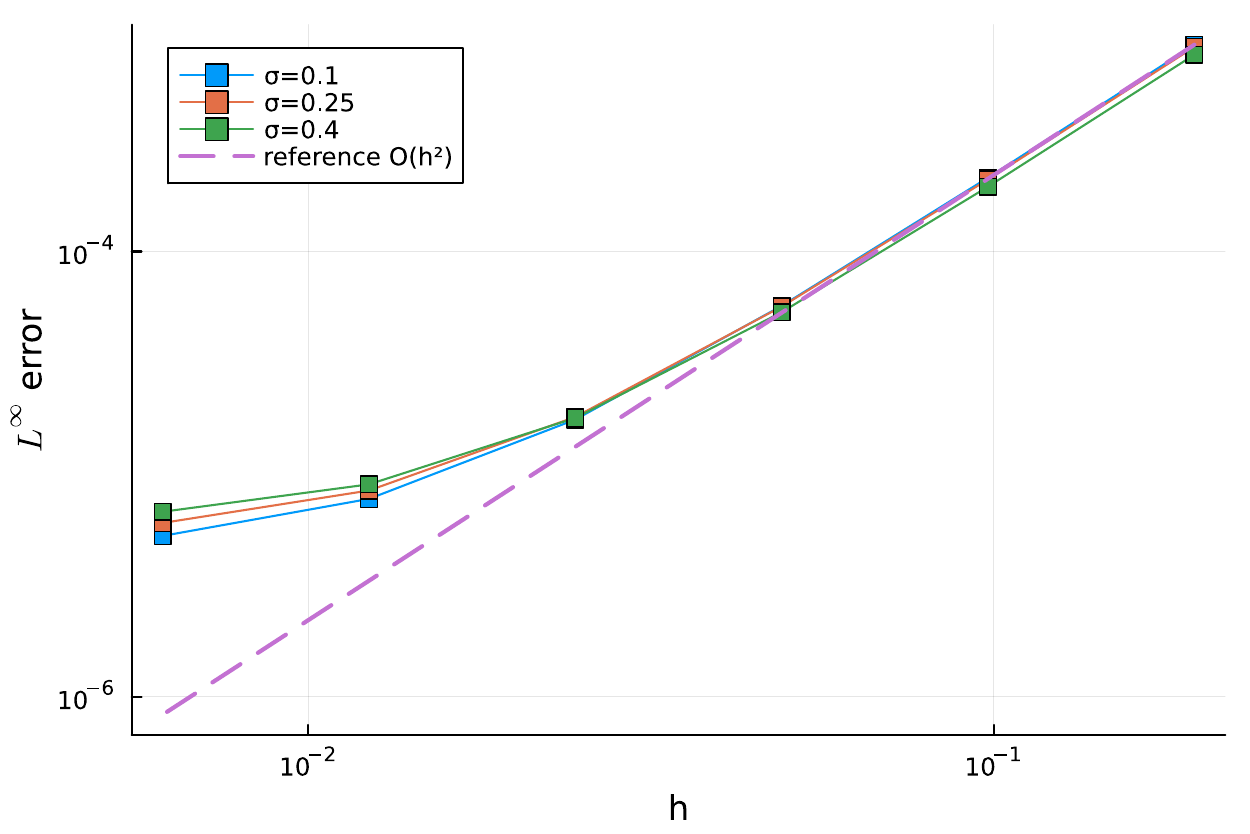}
\caption{Maximum in space error between numerical and analytical \eqref{exactsol} solution to \eqref{fpe} for $m=1$, $\sigma=0.25$, $\tau=2^{-12}$ and domain $[-\pi, \pi]$ calculated after the first time step.}\label{fig::linearconvergencefirstpoint}
\end{figure}
\end{example}
\begin{example}[Nonlinear convergence test]
   In this example we use Gaussian initial condition
   \begin{equation}
        u_0(x) = \frac{1}{\sqrt{4\pi\epsilon}}\exp(-\frac{x^2}{4\epsilon}), \quad \epsilon>0.
   \end{equation}
   We present profiles of the solution with such initial condition in the Fig. \ref{fig::profilesm2}, \ref{fig::profilesm4}. Estimated order of convergence using Aitken method for fully nonlinear equation with Gaussian initial condition solved by nonintegrated scheme are presented in Tab. \ref{tab:aitkengaussspat} and \ref{tab:aitkengausstemp}. For integrated scheme they are presented in Tab. \ref{tab:aitkengaussspatint} and \ref{tab:aitkengausstempint}.
\begin{table}[ht]
\centering
\begin{tabular}{cccc}
\hline
$\alpha\backslash\sigma$ & $0.1$ & $0.25$ & $0.4$ \\
\hline
$0.1$ & $0.95191$ & $0.95817$ & $0.96481$ \\
$0.25$ & $0.95162$ & $0.95869$ & $0.96581$ \\
$0.5$ & $0.93326$ & $0.93596$ & $0.94732$ \\
$0.6$ & $0.89075$ & $0.89368$ & $0.89327$ \\
$0.7$ & $0.82842$ & $0.82323$ & $0.82607$ \\
$0.8$ & $0.75022$ & $0.71790$ & $0.75196$ \\
$0.9$ & $0.65671$ & $0.63090$ & $0.74570$ \\
\hline
\end{tabular}
\caption{Aitken spatial convergence orders in the $L^\infty$ norm for the nonintegrated discrete scheme with $m=2$, $\varepsilon=0.01$, and fixed $\tau=2^{-12}$}
\label{tab:aitkengaussspat}
\end{table}

\begin{table}[ht]
\centering
\begin{tabular}{cccc}
\hline
$\alpha\backslash\sigma$ & $0.1$ & $0.25$ & $0.4$ \\
\hline
$0.1$ & $1.0551$ & $1.0550$ & $1.0550$ \\
$0.25$ & $1.0694$ & $1.0697$ & $1.0701$ \\
$0.5$ & $1.0983$ & $1.1022$ & $1.1047$ \\
$0.6$ & $1.1045$ & $1.1103$ & $1.1124$ \\
$0.7$ & $1.1015$ & $1.1077$ & $1.1065$ \\
$0.8$ & $1.0859$ & $1.0252$ & $0.98299$ \\
$0.9$ & $0.9942$ & $0.9365$ & $0.9127$ \\
\hline
\end{tabular}
\caption{Aitken temporal convergence orders in the $L^\infty$ norm for the nonintegrated discrete scheme with $m=2$, $\varepsilon=0.01$, and fixed $h=2^{-8}$.}
\label{tab:aitkengausstemp}
\end{table}

\begin{table}[ht]
\centering
\begin{tabular}{cccc}
\hline
$\alpha\backslash\sigma$ & $0.1$ & $0.25$ & $0.4$ \\
\hline
$0.1$ & $0.98246$ & $0.98276$ & $0.98212$ \\
$0.25$ & $0.98019$ & $0.98046$ & $0.97975$ \\
$0.5$ & $0.97151$ & $0.97227$ & $0.97222$ \\
$0.6$ & $0.96403$ & $0.96475$ & $0.96528$ \\
$0.7$ & $0.95349$ & $0.95425$ & $0.95503$ \\
$0.8$ & $0.94006$ & $0.94201$ & $0.94369$ \\
$0.9$ & $0.92601$ & $0.92894$ & $0.93301$ \\
\hline
\end{tabular}
\caption{Aitken spatial convergence orders in the $L^\infty$ norm for the integrated discrete scheme with $m=2$, $\varepsilon=0.01$, and fixed $\tau=2^{-12}$.}
\label{tab:aitkengaussspatint}
\end{table}

\begin{table}[ht]
\centering
\begin{tabular}{cccc}
\hline
$\alpha\backslash\sigma$ & $0.1$ & $0.25$ & $0.4$ \\
\hline
$0.1$ & $1.0573$ & $1.0574$ & $1.0575$ \\
$0.25$ & $1.0741$ & $1.0754$ & $1.0763$ \\
$0.5$ & $1.0868$ & $1.0931$ & $1.0958$ \\
$0.6$ & $1.0758$ & $1.0829$ & $1.0831$ \\
$0.7$ & $1.0539$ & $1.0601$ & $1.0550$ \\
$0.8$ & $1.0240$ & $1.0277$ & $1.0160$ \\
$0.9$ & $0.9919$ & $0.9918$ & $0.9750$ \\
\hline
\end{tabular}
\caption{Aitken temporal convergence orders in the $L^\infty$ norm for the integrated discrete scheme with $m=2$, $\varepsilon=0.01$, and fixed $h=2^{-8}$.}
\label{tab:aitkengausstempint}
\end{table}
\begin{figure}[ht!]
\centering
\includegraphics[width=0.5\textwidth, height=0.25\textheight]{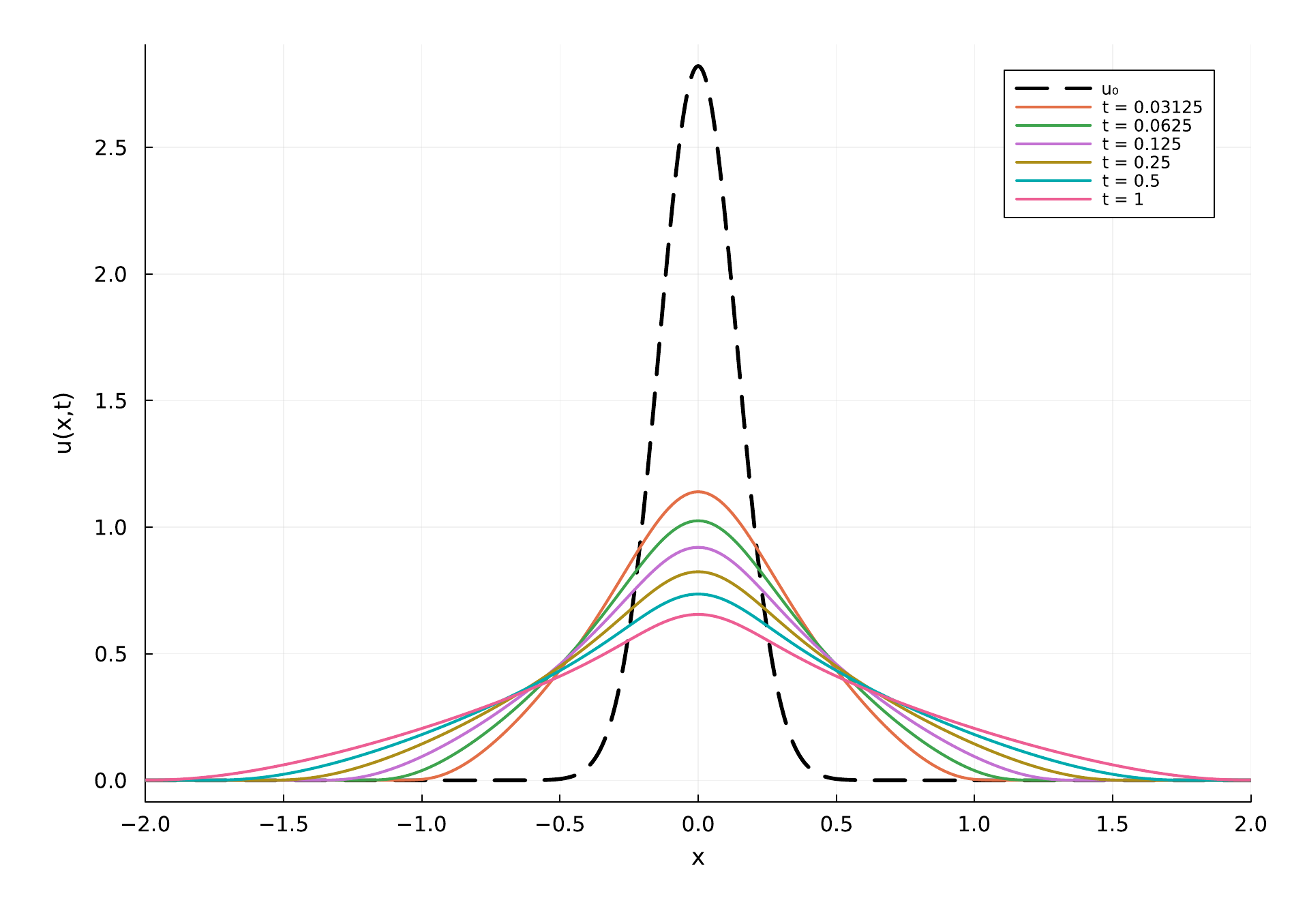}
\caption{Profiles of solutions to \eqref{eq:discrete} for $d=1$, $\alpha=0.5$, $\sigma=0.25$, $m=2$, $U_j^0=\frac{1}{\sqrt{4\pi\epsilon}}\exp(-\frac{x^2}{4\epsilon})$.}\label{fig::profilesm2}
\end{figure}
\begin{figure}[ht!]
\centering
\includegraphics[width=0.5\textwidth, height=0.25\textheight]{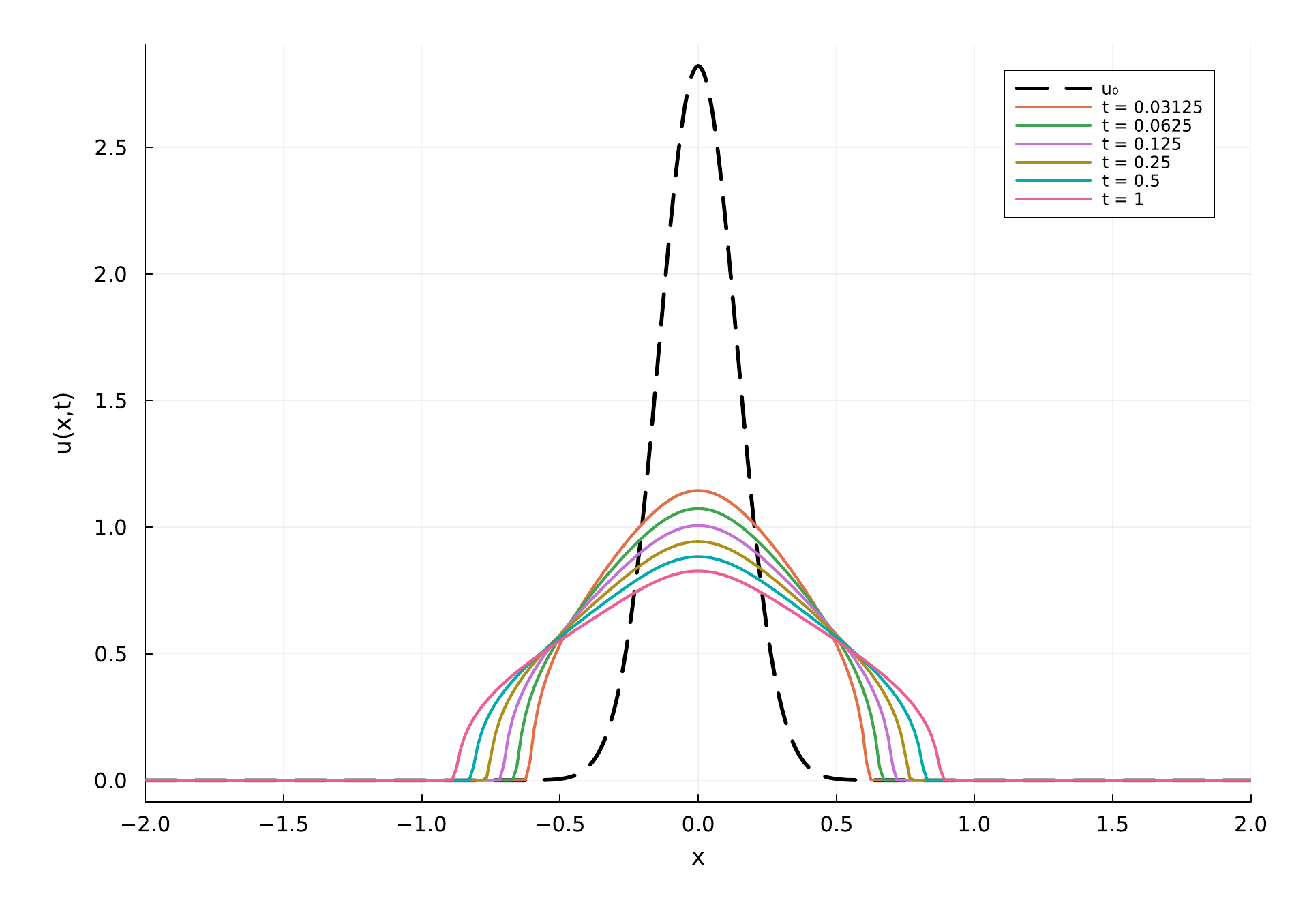}
\caption{Profiles of solutions to \eqref{eq:discrete} for $d=1$, $\alpha=0.5$, $\sigma=0.25$, $m=4$, $U_j^0=\frac{1}{\sqrt{4\pi\epsilon}}\exp(-\frac{x^2}{4\epsilon})$.}\label{fig::profilesm4}
\end{figure}
\end{example}
\begin{example}[Structural properties of the scheme]\label{ex3}
    In this example we consider compactly supported initial datum
    \begin{equation}\label{compinitdata}
        u_{0,1}(x) = \frac{15}{16}(1-x^2)_+^2.
    \end{equation}
    The constant $15/16$ is chosen, because it makes mass of the initial datum to be unit. In this experiment, we set $\alpha=0.5$, $\sigma=0.25$, $m=2$, $\tau=2^{-5}$, $h=2^{-6}$, the computational domain $[-6,6]$. In the Fig. \ref{fig::massex3} we present the evolution of mass in time and observe, that it is indeed unit and conserved. Next, we compare the density profiles and test order preservation for the cumulative scheme. To do this we introduce the initial datum
    \begin{equation}
        u_{0,2}(x) = \frac{1}{2}u_{0,1}(x-0.5) + \frac{1}{2}u_{0,1}(x-2.5).
    \end{equation}
    It is also of unit mass. Let us denote the cumulative initial data
    \begin{equation}
        v_{0,i}(x) = \int_{-\infty}^{x} u_{0,i}(y)\,dy, \quad i=1,2.
    \end{equation}
    It is easy to observe, that $u_{0,1}$ and $u_{0,2}$ cross, but $v_{0,2}\le v_{0,1}$ for all $x$. Let us denote corresponding solutions, by $u_1$, $u_2$, $v_1$, $v_2$. Figure~\ref{fig::nocomparison} shows crossing density profiles for initially unordered data; this is not a test of order preservation for the density scheme. In the Fig. \ref{fig::comparison} we observe that for the integrated scheme comparison holds. In the Fig. \ref{fig::dissipation}, we present all ingredients of the discrete dissipation inequality and then inequality itself. We observe that it holds numerically too.
    \begin{figure}[ht!]
\centering
\includegraphics[width=0.5\textwidth, height=0.25\textheight]{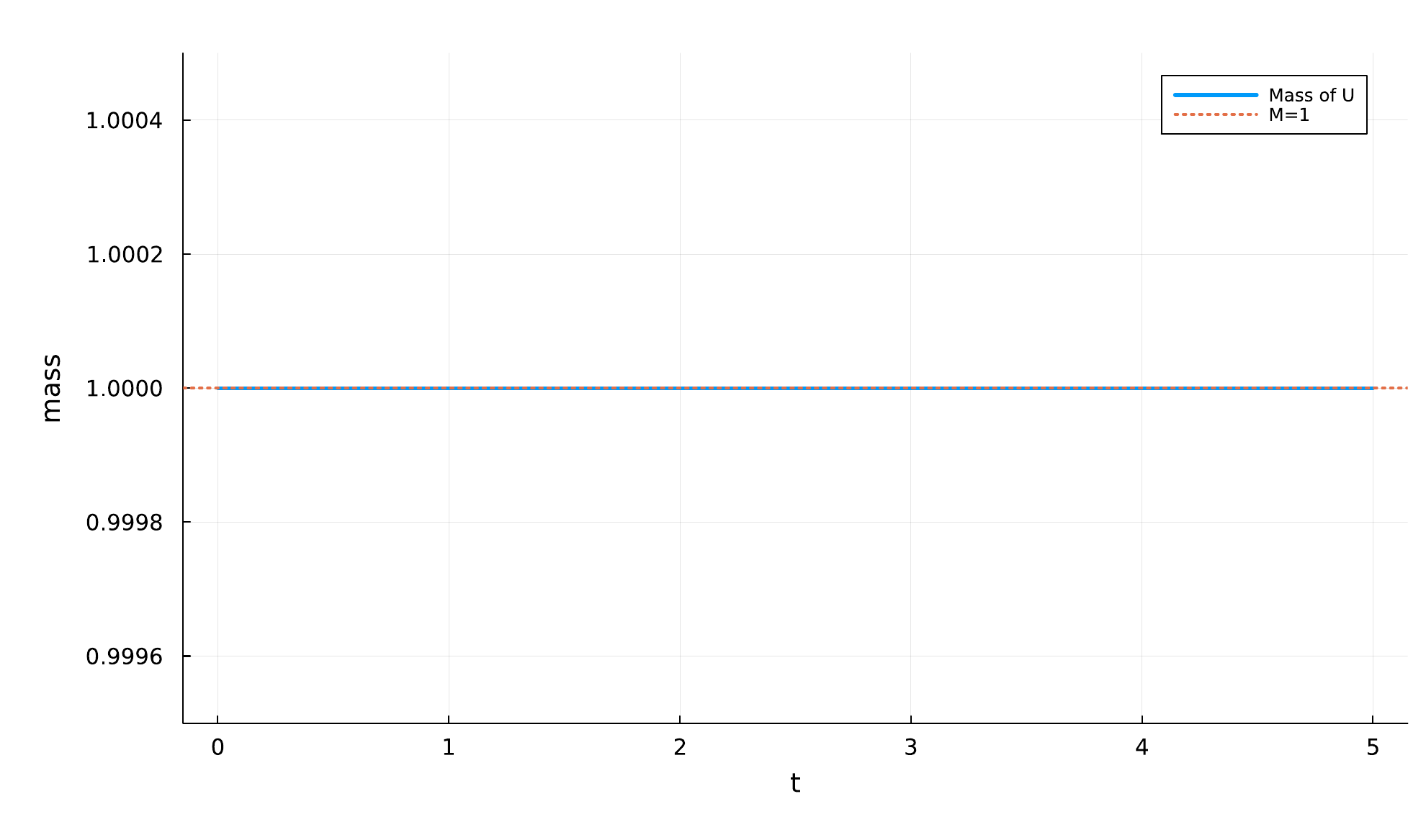}
\caption{Mass in time in Experiment 3.}\label{fig::massex3}
\end{figure}
\begin{figure}[ht!]
\centering
\includegraphics[width=0.7\textwidth, height=0.35\textheight]{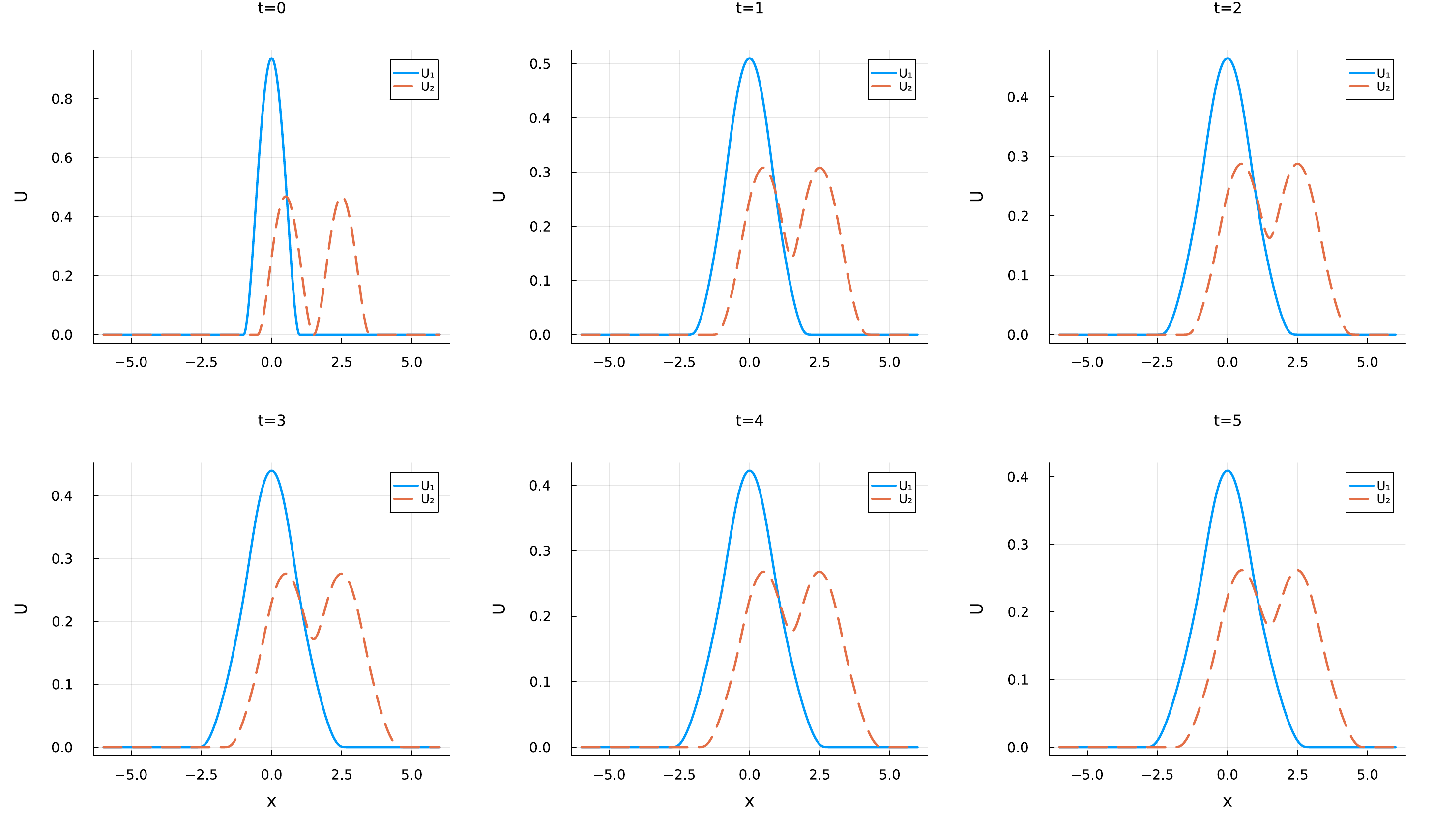}
\caption{Crossing density profiles $u_1$ and $u_2$ for initially unordered data.}\label{fig::nocomparison}
\end{figure}
\begin{figure}[ht!]
\centering
\includegraphics[width=0.7\textwidth, height=0.35\textheight]{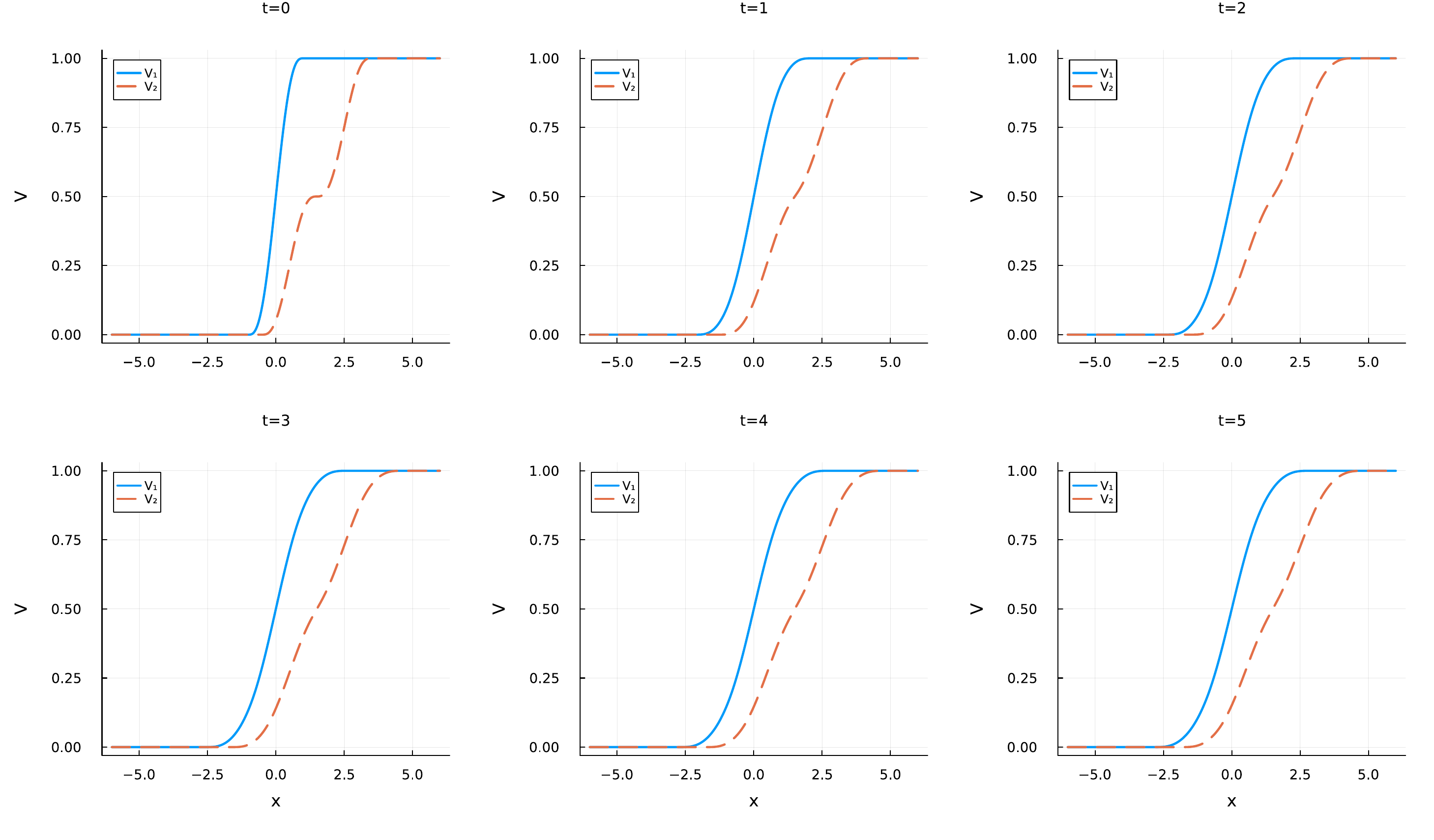}
\caption{Comparison between $v_1$ and $v_2$}\label{fig::comparison}
\end{figure}
\begin{figure}[ht!]
\centering
\includegraphics[width=0.7\textwidth, height=0.35\textheight]{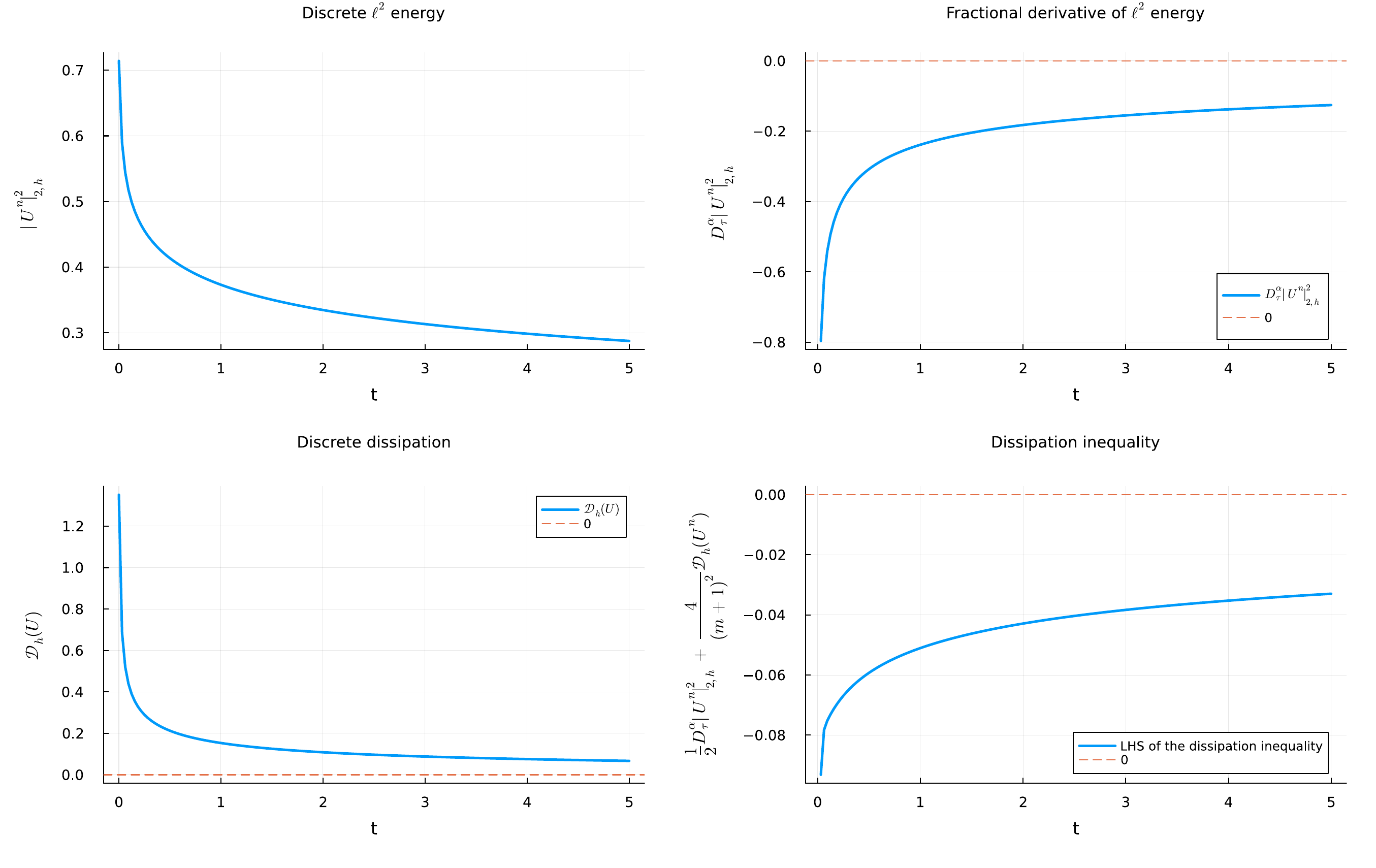}
\caption{Discrete energy, dissipation and dissipation inequality.}\label{fig::dissipation}
\end{figure}
\end{example}
\begin{example}[Explicit scheme]
    In the last example, we test the order of explicit scheme. We use the same initial condition as in Example \ref{ex3}. We again use Aitken method, since explicit solution is not known. We test the scheme for $\sigma\in\{0.25, 0.5, 0.75\}$, because we want to examine all three cases of CFL condition.
    \begin{table}[ht]
\centering
\begin{tabular}{c|ccc}
\hline
$\alpha\backslash\sigma$ & $0.25$ & $0.50$ & $0.75$ \\
\hline
$0.50$ & 1.0003 & 1.0001 & 1.0000 \\
$0.75$ & 0.9423 & 0.9423 & 0.9423 \\
\hline
\end{tabular}
\caption{Temporal Aitken convergence rates for the explicit scheme for fixed $h=2^{-6}$.}
\label{tab:explicit_temporal_rates}
\end{table}

\begin{table}[ht]
\centering
\begin{tabular}{c|ccc}
\hline
$\alpha\backslash\sigma$ & $0.25$ & $0.50$ & $0.75$ \\
\hline
$0.50$ & 0.9894 & 0.9944 & 1.0015 \\
$0.75$ & 0.9900 & 0.9947 & 1.0017 \\
\hline
\end{tabular}
\caption{Spatial Aitken convergence rates for the explicit scheme fixed $\tau=3.06\cdot 10^{-8}$.}
\label{tab:explicit_spatial_rates}
\end{table}
\end{example}
\section*{Acknowledgment}
L.~P. would like to thank F\'elix del Teso and David G\'omez Castro
(Universidad Aut\'onoma de Madrid) for many insightful discussions on the nonlocal pressure formulation and for introducing him to the method of integrated solutions.

This research was supported by the National Science Center, Poland (NCN) under the grant Sonata Bis with a number NCN 2020/38/E/ST1/00153.

The authors used ChatGPT (OpenAI) for language editing, drafting the introduction and literature overview. The authors are responsible for the final content of the manuscript, including the accuracy of the mathematical results and proofs, the precision of the references, and the numerical results.
\appendix
\section{Discretization of the fractional pressure gradient}

In this appendix, we describe the discretization of the pressure-gradient operator
\begin{equation}
Gv=\nabla(-\Delta)^{-\sigma}v.
\end{equation}
For smooth functions, this operator may be written, up to a dimensional constant, as the principal value singular integral
\begin{equation}
Gv(x)
=
c_{d,\sigma}\operatorname{P.V.}
\int_{\mathbb{R}^d}
v(z)\frac{z-x}{|z-x|^{d+2-2\sigma}}\,dz.
\end{equation}
Equivalently, since the kernel is odd, one may subtract the value $v(x)$ inside the principal value integral and write
\begin{equation}
Gv(x)
=
c_{d,\sigma}\operatorname{P.V.}
\int_{\mathbb{R}^d}
\left(v(z)-v(x)\right)
\frac{z-x}{|z-x|^{d+2-2\sigma}}\,dz.
\end{equation}
The latter form is useful for understanding the singularity of the kernel. Equivalently, after the change of variables $z=x+y$ and using the oddness of the kernel, we may write
\begin{equation}
	Gv(x)=c_{d,\sigma}\operatorname{P.V.}\int_{\mathbb{R}^d}
	\frac{v(x+y)-v(x)}{|y|^{d+1-2\sigma}}\frac{y}{|y|}\,dy.
\end{equation}
The subtraction of $v(x)$ makes the integrand locally integrable at the origin for every $\sigma\in(0,1)$. If $0<\sigma<1/2$ and $v$ is bounded, the integral is also absolutely convergent at infinity. For $\sigma\geq1/2$, a symmetric interpretation at infinity or an additional decay assumption is required.
\subsection{Piecewise constant approximation}
We want to approximate this operator at the point
\begin{equation}
x_m=mh, \quad m=j+\frac{1}{2}e_i,
\end{equation}
where the pressure gradient is evaluated at the cell interface in the $i$-th direction. Assume that each point $x_k=kh$, $k\in\mathbb{Z}^d$, lies in the middle of the cell
\begin{equation}
C_k = \prod_{\ell=1}^d \left[\left(k_\ell-\frac{1}{2}\right)h, \left(k_\ell+\frac{1}{2}\right)h\right].
\end{equation}
We approximate $v(z)$ by piecewise constant functions
\begin{equation}
v(z) \approx\sum_{k\in\mathbb{Z}^d}U_k \chi_{C_k}(z).
\end{equation}
Formally, this gives
\begin{equation}
Gv(x_m) \approx
c_{d,\sigma}\sum_{k\in\mathbb{Z}^d}U_k
\operatorname{P.V.}\int_{C_k}
\frac{z-x_m}{|z-x_m|^{d+2-2\sigma}}\,dz.
\end{equation}
Let
\begin{equation}
\beta=k-m, \quad \eta=\frac{z-x_m}{h}.
\end{equation}
Then the cell $C_k$ is transformed into the unit cell
\begin{equation}
C_\beta =
\prod_{\ell=1}^d
\left[\beta_\ell-\frac{1}{2},\beta_\ell+\frac{1}{2}\right],
\end{equation}
and
\begin{equation}
\begin{aligned}
	\int_{C_k}
	\frac{z-x_m}{|z-x_m|^{d+2-2\sigma}}\,dz
	&=
	\int_{C_\beta}
	\frac{h\eta}{|h\eta|^{d+2-2\sigma}}h^d\,d\eta \\
	&=
	h^{2\sigma-1}
	\int_{C_\beta}
	\frac{\eta}{|\eta|^{d+2-2\sigma}}\,d\eta.
\end{aligned}
\end{equation}
Thus, for the regular part of the quadrature, we obtain
\begin{equation}
Gv(x_m)
\approx
h^{2\sigma-1}
\sum_{k\in\mathbb{Z}^d}
w_{k-m}U_k,
\end{equation}
where
\begin{equation}
w_\beta
=
c_{d,\sigma}
\operatorname{P.V.}
\int_{C_\beta}
\frac{\eta}{|\eta|^{d+2-2\sigma}}\,d\eta.
\end{equation}
Equivalently, if the main text uses the notation,
\begin{equation}
(G_hU)_m=\sum_{k\in\mathbb{Z}^d}g_{k-m}(h)U_k,
\end{equation}
then the two sets of weights are related by
\begin{equation}
g_\beta(h)=h^{2\sigma-1}w_\beta.
\end{equation}
The weights inherit the oddness of the continuous kernel. Namely, whenever the principal value is well-defined,
\begin{equation}
w_{-\beta}=-w_\beta.
\end{equation}
Consequently, constants are annihilated by the corresponding discrete operator, which is the discrete analogue of the principal value cancellation. In the notation of the main text, this is expressed as
\begin{equation}
\sum_{\beta}g_\beta(h)=0,
\end{equation}
where the sum is understood in the principal value sense.

From the Taylor expansion around the center of the cell $C_\beta$, for large $|\beta|$, we have
\begin{equation}
\int_{C_\beta}
\frac{\eta}{|\eta|^{d+2-2\sigma}}\,d\eta
=
\frac{\beta}{|\beta|^{d+2-2\sigma}}
+
\mathcal{O}\left(|\beta|^{-(d+3-2\sigma)}\right).
\end{equation}
This yields the decay estimate
\begin{equation}	
|w_\beta|
\leq
C|\beta|^{-(d+1-2\sigma)},
\end{equation}
and therefore
\begin{equation}
|g_\beta(h)|
\leq
Ch^{2\sigma-1}|\beta|^{-(d+1-2\sigma)}.
\end{equation}
We emphasize that, in several dimensions, this asymptotic decay and the oddness of the kernel do not by themselves imply the sign condition used in the maximum principle. Therefore, in the analysis we assume that the discrete weights are chosen so that the induced coefficients
\begin{equation}
c^i_\beta(h)
=
g^i_{\beta-\frac{1}{2}e_i}(h)
-
g^i_{\beta+\frac{1}{2}e_i}(h)
\end{equation}
satisfy
\begin{equation}
\frac1h\sum_{i=1}^d c^i_\beta(h)\geq0
\quad \text{for }\beta\neq0,
\quad
\sum_{\beta}c^i_\beta(h)=0.
\end{equation}
This is the structural condition required for the discrete maximum principle.
In one space dimension, the regular cell weights can be computed explicitly away from the singular cell. For $\beta>0$ and $\sigma\neq 1/2$,
\begin{equation}
w_\beta
=
c_{1,\sigma}
\frac{
	\left(\beta+\frac{1}{2}\right)^{2\sigma-1}
	-
	\left(\beta-\frac{1}{2}\right)^{2\sigma-1}
}{2\sigma-1}.
\end{equation}
For $\beta<0$, the weights are defined by antisymmetry,
\begin{equation}
w_\beta=-w_{-\beta}.
\end{equation}
In the borderline case $\sigma=1/2$, the corresponding formula becomes logarithmic: for $\beta>0$,
\begin{equation}
w_\beta
=
c_{1,1/2}
\log\left(
\frac{\beta+\frac{1}{2}}{\beta-\frac{1}{2}}
\right).
\end{equation}

For $\frac12<\sigma<1$, the kernel
\begin{equation}
	K(y)=\frac{y}{|y|^{d+2-2\sigma}}
\end{equation}
is locally integrable. Hence, the cell integrals defining the piecewise constant weights are well-defined, including for the cells adjacent to the evaluation point. On the other hand, for $0<\sigma\leq\frac12$, these cell integrals are singular at the interface, and the piecewise constant construction requires an additional principal-value, finite-part, or other cancellation-preserving regularization. We do not consider such a regularization here.

\begin{theorem}[Piecewise truncation error]
\label{thm:piecewise_truncation}
Let $\frac12<\sigma<1$, and let the discrete approximation be evaluated at the cell-interface point
\begin{equation}
	x_m=mh, \quad m=j+\frac12e_i,
\end{equation}
using the values of the cell-midpoint $V_k=v(x_k)$. Assume that $v\in C_c^2(\mathbb{R}^d)$. Then, for
\begin{equation}
	E_h(x_m)=Gv(x_m)-(G_hV)_m,
\end{equation}
we have
\begin{equation}
	|E_h(x_m)|\leq C_vh^{2\sigma}.
\end{equation}
\end{theorem}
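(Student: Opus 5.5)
The plan is to write the discrete operator as the exact integral of the kernel against the piecewise constant interpolant, and then split the error into a near-field and a far-field part. Since $\frac12<\sigma<1$, the kernel $K(y)=y/|y|^{d+2-2\sigma}$ satisfies $|K(y)|=|y|^{-(d+1-2\sigma)}$ with $d+1-2\sigma<d$, so it is locally integrable and every cell integral is an honest Lebesgue integral. By the construction of the weights,
\begin{equation}
(G_hV)_m=c_{d,\sigma}\int_{\mathbb{R}^d}\Pi_hv(z)\,K(z-x_m)\,dz,\qquad \Pi_hv=\sum_{k}v(x_k)\chi_{C_k}.
\end{equation}
Hence
\begin{equation}
E_h(x_m)=c_{d,\sigma}\int_{\mathbb{R}^d}\bigl(v(z)-\Pi_hv(z)\bigr)K(z-x_m)\,dz.
\end{equation}
Because $K$ is odd and locally integrable, the principal value in $Gv$ is not needed near the origin. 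At infinity the integral converges absolutely because $v$ has compact support. The remaining question is how fast this integral vanishes.

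\textbf{Near field.} Let $|z-x_m|\le 2\sqrt d\,h$. On each cell, $|v-\Pi_hv|\le \|\nabla v\|_\infty \sqrt d\,h/2$, so
\begin{equation}
\int_{|y|\le 2\sqrt d h}|v-\Pi_hv|\,|K(y)|\,dy\le C\|\nabla v\|_\infty\,h\int_0^{2\sqrt d h}r^{-(d+1-2\sigma)}r^{d-1}\,dr\le C_v\,h\cdot h^{2\sigma-1}=C_vh^{2\sigma}.
\end{equation}
This uses $2\sigma-1>0$.

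\textbf{Far field.} On the remaining cells, which lie at distance $\gtrsim h|\beta|$ from $x_m$, use the cancellation inside each cell. On $C_k$ write $v(z)-v(x_k)=\nabla v(x_k)\cdot(z-x_k)+O(\|D^2v\|_\infty h^2)$, and write $K(z-x_m)=K(x_k-x_m)+O(h|x_k-x_m|^{-(d+2-2\sigma)})$. The product of the two leading terms integrates to zero over the symmetric cell, since $\int_{C_k}(z-x_k)\,dz=0$. The remaining terms contribute per cell at most
\begin{equation}
C_vh^d\left(h^2|x_k-x_m|^{-(d+1-2\sigma)}+h\cdot h\,|x_k-x_m|^{-(d+2-2\sigma)}\right).
\end{equation}

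\textbf{Summing the far-field terms.} Put $|x_k-x_m|\approx h|\beta|$ and restrict to cells meeting the support of $v$ or its neighbourhood. This restriction is needed because $v-\Pi_hv$ vanishes outside a bounded set, so the sum is over $|\beta|\lesssim R/h$. The two pieces give:
\begin{itemize}
\item The second piece gives $h^{2\sigma}\sum_{\beta}|\beta|^{-(d+2-2\sigma)}\le Ch^{2\sigma}$, which is summable because $d+2-2\sigma>d$.
\item The first piece gives $h^{2\sigma+1}\sum_{|\beta|\lesssim R/h}|\beta|^{-(d+1-2\sigma)}\approx h^{2\sigma+1}(R/h)^{2\sigma-1}=C_R h^2\le C_vh^{2\sigma}$, since $2\sigma<2$.
\end{itemize}
Combining the near-field and far-field bounds gives $|E_h(x_m)|\le C_vh^{2\sigma}$.

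The main obstacle is the far-field bookkeeping. The non-summable tail $|\beta|^{-(d+1-2\sigma)}$ must be controlled with the compact support of $v$, and the first-order cancellation must be used to gain the extra power of $h$ there. I would also check that cells straddling the near/far boundary are handled consistently, for instance by defining the near region as a union of whole cells.
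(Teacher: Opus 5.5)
Your proposal is correct and follows essentially the paper's route. Both split into near-field and far-field cells. In the near field you use local integrability of $K$ (available since $\sigma>\frac12$) together with a first-order bound on $v-\Pi_hv$. In the far field you use cellwise cancellation of the linear Taylor term, the decay $|\nabla K(y)|\lesssim|y|^{-(d+2-2\sigma)}$, and the compact support of $v$ to control the non-summable $h^2|K|$ remainder. The paper's preliminary reduction to the odd part of $v$ about $x_m$ is cosmetic, and your version correctly does without it.

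The one point worth tightening, which the paper also glosses over, is uniformity of $C_v$ in $x_m$. Rather than saying the sum runs over $|\beta|\lesssim R/h$, note that at most $C(R/h)^d$ cells contribute. Since $|\beta|^{-(d+1-2\sigma)}$ is radially decreasing, the sum is largest when those cells cluster around $x_m$, which gives the bound $C_Rh^2$ wherever $x_m$ lies.
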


\begin{proof}
Let
\begin{equation}
	K(y)=\frac{y}{|y|^{d+2-2\sigma}}.
\end{equation}
Writing $\beta=k-m\in\Gamma_i=\mathbb{Z}^d-\frac12e_i$, the quadrature error can be expressed as
\begin{equation}
	E_h(x_m)=c_{d,\sigma}\sum_{\beta\in\Gamma_i}\int_{C_\beta^h}K(y)\left(v(x_m+y)-v(x_m+\beta h)\right)\,dy,
\end{equation}
where
\begin{equation}
	C_\beta^h=\beta h+\left[-\frac{h}{2},\frac{h}{2}\right]^d.
\end{equation}

Since $\Gamma_i$ is symmetric with respect to the origin and $K$ is odd, only the odd part of $v$ around $x_m$ contributes to the error. Define
\begin{equation}
	v_{\mathrm{odd}}(y)=\frac12\left(v(x_m+y)-v(x_m-y)\right).
\end{equation}
Then
\begin{equation}
	E_h(x_m)=c_{d,\sigma}\sum_{\beta\in\Gamma_i}\int_{C_\beta^h}K(y)\left(v_{\mathrm{odd}}(y)-v_{\mathrm{odd}}(\beta h)\right)\,dy.
\end{equation}

Fix a sufficiently large integer $M$, independent of $h$, and split the sum into
\begin{equation}
	A=\{\beta\in\Gamma_i:\|\beta\|_\infty\leq M\}, \quad B=\Gamma_i\setminus A.
\end{equation}
We write
\begin{equation}
	E_h=E_A+E_B.
\end{equation}

For $\beta\in A$, Taylor expansion around the origin gives
\begin{equation}
	v_{\mathrm{odd}}(y)=\nabla v(x_m)\cdot y+R(y), \quad |R(y)|\leq C_v|y|^2.
\end{equation}
Hence,
\begin{equation}
	v_{\mathrm{odd}}(y)-v_{\mathrm{odd}}(\beta h)
	=\nabla v(x_m)\cdot(y-\beta h)+R(y)-R(\beta h).
\end{equation}
Since $A$ is finite and $|y|\leq C_Mh$ on the corresponding cells,
\begin{equation}
	|R(y)-R(\beta h)|\leq C_vh^2.
\end{equation}
Therefore,
\begin{equation}
	|E_A|\leq C_v\sum_{\beta\in A}\int_{C_\beta^h}|K(y)||y-\beta h|\,dy
	+C_vh^2\int_{|y|\leq C_Mh}|K(y)|\,dy.
\end{equation}
Because $\sigma>\frac12$, both terms are finite. Rescaling $y=h\eta$ gives
\begin{equation}
	\sum_{\beta\in A}\int_{C_\beta^h}|K(y)||y-\beta h|\,dy\leq C_Mh^{2\sigma}
\end{equation}
and
\begin{equation}
	h^2\int_{|y|\leq C_Mh}|K(y)|\,dy\leq C_Mh^{2\sigma+1}.
\end{equation}
Consequently,
\begin{equation}
	|E_A|\leq C_vh^{2\sigma}.
\end{equation}

We next consider the far-field contribution. For $\beta\in B$, write
\begin{equation}
	y=\beta h+\xi, \quad \xi\in\left[-\frac{h}{2},\frac{h}{2}\right]^d.
\end{equation}
Taylor expansion yields
\begin{equation}
	v_{\mathrm{odd}}(\beta h+\xi)-v_{\mathrm{odd}}(\beta h)=\nabla v_{\mathrm{odd}}(\beta h)\cdot\xi+R_{\beta}(\xi),
\end{equation}
where
\begin{equation}
	|R_{\beta}(\xi)|\leq C_v|\xi|^2.
\end{equation}
Since the cell is symmetric,
\begin{equation}
	\int_{\left[-\frac{h}{2},\frac{h}{2}\right]^d}K(\beta h)\nabla v_{\mathrm{odd}}(\beta h)\cdot\xi\,d\xi=0.
\end{equation}
Moreover,
\begin{equation}
	|\nabla K(y)|\leq C|y|^{-d-2+2\sigma}.
\end{equation}
Choosing $M$ sufficiently large, $|y|$ is comparable to $|\beta|h$ throughout $C_\beta^h$, and therefore
\begin{equation}
	\left|\int_{C_\beta^h}\left(K(y)-K(\beta h)\right)\nabla v_{\mathrm{odd}}(\beta h)\cdot(y-\beta h)\,dy\right|
	\leq C_vh^{2\sigma}|\beta|^{-d-2+2\sigma}.
\end{equation}
Since
\begin{equation}
	\sum_{\beta\in B}|\beta|^{-d-2+2\sigma}<\infty
\end{equation}
for $\sigma<1$, these terms contribute at most $C_vh^{2\sigma}$.

The remainder terms contain an additional factor $|y-\beta h|^2$. Since $v\in C_c^2(\mathbb{R}^d)$ and $K$ is locally integrable for $\sigma>\frac12$, their total contribution is bounded by
\begin{equation}
	Ch^2.
\end{equation}
Consequently,
\begin{equation}
	|E_B|\leq C_v\left(h^{2\sigma}+h^2\right)\leq C_vh^{2\sigma},
\end{equation}
because $\frac12<\sigma<1$. Combining the estimates for $E_A$ and $E_B$ completes the proof.
\end{proof}

\begin{remark}
For $0<\sigma\leq\frac12$, the cells adjacent to the interface contain a non-integrable singularity of the kernel. A piecewise constant approximation can still be considered after introducing a suitable cancellation-preserving regularization, but its construction and consistency analysis are different from the one above and are not pursued here.
\end{remark}

\subsection{Semigroup approach}

We can also discretize the fractional pressure gradient using the semigroup approach. Let
\begin{equation}
	(\Delta_h\psi)_j=\frac{1}{h^2}\sum_{i=1}^d\left(\psi_{j+e_i}+\psi_{j-e_i}-2\psi_j\right)
\end{equation}
be the standard discrete Laplacian. The associated discrete heat semigroup satisfies
\begin{equation}
	(e^{t\Delta_h}\psi)_j=\sum_{k\in\mathbb{Z}^d}G\left(j-k,\frac{t}{h^2}\right)\psi_k,
\end{equation}
where
\begin{equation}\label{eq:prod_form_g}
	G(\beta,r)=e^{-2dr}\prod_{i=1}^dI_{|\beta_i|}(2r).
\end{equation}
Here, $I_\nu$ denotes the modified Bessel function of the first kind. The discrete heat kernel satisfies
\begin{equation}
	G(\beta,r)\geq0, \quad G(-\beta,r)=G(\beta,r), \quad \sum_{\beta\in\mathbb{Z}^d}G(\beta,r)=1.
\end{equation}

For the $i$-th component of the pressure gradient, evaluated at the interface $x_{j+\frac12e_i}$, we use the midpoint difference
\begin{equation}
	D_{h,i}^+\psi_j=\frac{\psi_{j+e_i}-\psi_j}{h}.
\end{equation}
We define
\begin{equation}\label{eq:semigroup_gradient}
	(G_{h,i}U)_{j+\frac12e_i}=\frac{1}{\Gamma(\sigma)}\int_0^\infty D_{h,i}^+\left(e^{t\Delta_h}U\right)_j\frac{dt}{t^{1-\sigma}}.
\end{equation}
This is the discrete analogue of the semigroup representation
\begin{equation}
	\nabla(-\Delta)^{-\sigma}v=\frac{1}{\Gamma(\sigma)}\int_0^\infty\nabla e^{t\Delta}v\frac{dt}{t^{1-\sigma}}.
\end{equation}

Let
\begin{equation}
	\Gamma_i=\mathbb{Z}^d-\frac12e_i.
\end{equation}
Using the heat-kernel representation in \eqref{eq:semigroup_gradient} and the change of variables $t=h^2r$, we obtain
\begin{equation}
	(G_{h,i}U)_{j+\frac12e_i}=\sum_{k\in\mathbb{Z}^d}g_{k-j-\frac12e_i}^i(h)U_k,
\end{equation}
where, for $\gamma\in\Gamma_i$,
\begin{equation}\label{eq:semigroup_weights}
	g_\gamma^i(h)=\frac{h^{2\sigma-1}}{\Gamma(\sigma)}\int_0^\infty\left[G\left(\gamma-\frac12e_i,r\right)-G\left(\gamma+\frac12e_i,r\right)\right]r^{\sigma-1}\,dr.
\end{equation}
The symmetry of the discrete heat kernel immediately gives
\begin{equation}
	g_{-\gamma}^i(h)=-g_\gamma^i(h).
\end{equation}
Consequently, constants are annihilated by $G_{h,i}$, with the corresponding sum of the weights understood in the symmetric sense whenever it is not absolutely convergent.

Under the corresponding discrete heat-kernel difference estimates, the weights satisfy
\begin{equation}
	|g_\gamma^i(h)|\leq Ch^{2\sigma-1}|\gamma|^{-(d+1-2\sigma)}.
\end{equation}
This is the decay assumption used in the multidimensional structural analysis. In the one-dimensional case needed for the convergence theory, the same estimate follows directly from the explicit formula below.

Next, we verify that the semigroup weights satisfy the sign and cancellation properties required in the main text. Notice that the sign condition is imposed on the sum over the spatial directions.

\begin{proposition}[Structural properties of the semigroup weights]
\label{prop:semigroup_multidimensional}
Let $d\geq1$, $0<\sigma<1$, and let $g_\gamma^i(h)$ be given by \eqref{eq:semigroup_weights}. Then, for every fixed $h>0$ and $i=1,\ldots,d$,
\begin{equation}
\sum_{\beta\in\mathbb{Z}^d}|c_\beta^i(h)|<\infty,
\quad
\sum_{\beta\in\mathbb{Z}^d}c_\beta^i(h)=0.
\end{equation}
Moreover, putting $s=1-\sigma$, for $\beta\neq0$ we have
\begin{equation}\label{eq:semigroup_aggregate_kernel}
C_\beta(h)=\frac1h\sum_{i=1}^d c_\beta^i(h)
=\frac{h^{-2s}}{|\Gamma(-s)|}
\int_0^\infty G(\beta,r)\frac{dr}{r^{1+s}}\geq0.
\end{equation}
Consequently, $C_{-\beta}(h)=C_\beta(h)$, $\sum_{\beta\neq0}C_\beta(h)<\infty$, and
\begin{equation}
L_h=-\sum_{i=1}^dD_{h,i}^-G_{h,i}=(-\Delta_h)^s.
\end{equation}
The last identity holds, in particular, for bounded grid functions for which the pressure-gradient operators are well-defined.
\end{proposition}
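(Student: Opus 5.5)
The plan is to reduce everything to identities for the discrete heat kernel $G(\beta,r)$ from \eqref{eq:prod_form_g}, using the fact that $r\mapsto G(\cdot,r)$ is the heat semigroup of the unit-step Laplacian, $\partial_r G(\beta,r)=\sum_{i=1}^d\big(G(\beta+e_i,r)+G(\beta-e_i,r)-2G(\beta,r)\big)$.

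\textbf{Step 1: formula for $c_\beta^i(h)$.} Inserting \eqref{eq:semigroup_weights} into $c_\beta^i(h)=g^i_{\beta-\frac12e_i}(h)-g^i_{\beta+\frac12e_i}(h)$ gives
\begin{equation}
c_\beta^i(h)=\frac{h^{2\sigma-1}}{\Gamma(\sigma)}\int_0^\infty \delta_i^2G(\beta,r)\,r^{\sigma-1}\,dr,
\end{equation}
where $\delta_i^2G(\beta,r)=G(\beta+e_i,r)+G(\beta-e_i,r)-2G(\beta,r)$.

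\textbf{Step 2: summability and cancellation.} For absolute summability, apply Tonelli and bound $S_i(r):=\sum_\beta|\delta_i^2G(\beta,r)|$.
\begin{itemize}
\item For small $r$, use $S_i(r)\le4$, since $G\ge0$ and $\sum_\beta G=1$.
\item For large $r$, use the product form \eqref{eq:prod_form_g}. Only the $i$-th factor is differenced, and the other factors sum to $1$. Hence $S_i(r)$ equals the $\ell^1$ norm of the second difference of the one-dimensional kernel $e^{-2r}I_{|n|}(2r)$. This is $\|\delta^2e^{r\delta^2}\|_{\ell^1\to\ell^1}$, which I will bound by $C/r$ using analyticity of the discrete heat semigroup on $\ell^1(\mathbb{Z})$ (equivalently, known Bessel-function difference estimates).
\end{itemize}
Then $\int_0^\infty S_i(r)r^{\sigma-1}\,dr<\infty$, because $r^{\sigma-1}$ is integrable at $0$ and $r^{\sigma-2}$ at $\infty$. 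This yields $\sum_\beta|c_\beta^i(h)|<\infty$. Fubini is then justified, and $\sum_\beta\delta_i^2G(\beta,r)=0$ for every $r$ (by translation invariance of the sum) gives $\sum_\beta c_\beta^i(h)=0$. I expect the $O(1/r)$ $\ell^1$ bound to be the main technical obstacle; it is where the Bessel structure genuinely enters.

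\textbf{Step 3: the aggregate kernel.} Summing Step 1 over $i$ and using the heat equation,
\begin{equation}
C_\beta(h)=\frac{h^{2\sigma-2}}{\Gamma(\sigma)}\int_0^\infty \partial_rG(\beta,r)\,r^{\sigma-1}\,dr .
\end{equation}
For $\beta\neq0$ I integrate by parts. The boundary terms vanish:
\begin{itemize}
\item at $r=0$, because $G(\beta,r)=O(r^{|\beta|_1})$ with $|\beta|_1\ge1$, so $r^{\sigma-1}G\to0$;
\item at $\infty$, because $G(\beta,r)\le Cr^{-d/2}\le C$ while $r^{\sigma-1}\to0$.
\end{itemize}
The remaining integral $\int_0^\infty G\,r^{\sigma-2}\,dr$ converges for the same reasons. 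This gives
\begin{equation}
C_\beta(h)=\frac{(1-\sigma)h^{-2s}}{\Gamma(\sigma)}\int_0^\infty G(\beta,r)\,r^{-1-s}\,dr .
\end{equation}
Writing $1-\sigma=s$ and $\Gamma(\sigma)=\Gamma(1-s)=-s\Gamma(-s)=s|\Gamma(-s)|$, the prefactor is $h^{-2s}/|\Gamma(-s)|$, which is \eqref{eq:semigroup_aggregate_kernel}. Nonnegativity follows from $G\ge0$, and $C_{-\beta}=C_\beta$ from $G(-\beta,r)=G(\beta,r)$. Finally, $\sum_{\beta\ne0}C_\beta(h)\le h^{-1}\sum_i\sum_\beta|c^i_\beta(h)|<\infty$ by Step 2.

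\textbf{Step 4: identification $L_h=(-\Delta_h)^s$.} Use the representation $(L_hU)_j=\sum_{\beta\neq0}C_\beta(h)(U_j-U_{j+\beta})$ derived before Lemma~\ref{lem:discrete_sv}. Compare it with the semigroup definition
\begin{equation}
(-\Delta_h)^sU=\frac{1}{|\Gamma(-s)|}\int_0^\infty\big(U-e^{t\Delta_h}U\big)\,\frac{dt}{t^{1+s}} .
\end{equation}
Since $\sum_kG(k,r)=1$, we have $U_j-(e^{t\Delta_h}U)_j=\sum_{\beta\ne0}G(\beta,t/h^2)(U_j-U_{j+\beta})$. Substituting $t=h^2r$ and exchanging sum and integral (justified by the finiteness of $\sum_{\beta\neq0}C_\beta(h)$ and boundedness of $U$) reproduces exactly the kernel $C_\beta(h)$ of Step 3. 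This proves the identity for bounded grid functions.
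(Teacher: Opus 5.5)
Your outline matches the paper's proof step for step: the second-difference formula for $c^i_\beta(h)$, an $\ell^1$ bound $C\min\{1,r^{-1}\}$ on the differenced heat kernel giving summability and cancellation, the heat equation plus integration by parts (with the same boundary-term argument) for $C_\beta(h)$, and the semigroup formula for $(-\Delta_h)^s$. Steps 1, 3 and 4 are fine as written.

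The one point you leave open is the $O(1/r)$ bound, and your appeal to ``analyticity'' does not supply it. Because $\delta^2$ is bounded on $\ell^1(\mathbb{Z})$, the semigroup is automatically entire, but that only gives $\|\delta^2e^{r\delta^2}\|\le 4$, with no decay in $r$. On the other hand, for this contraction semigroup, bounded analyticity on a sector is \emph{equivalent} to $\sup_{r>0}r\|\delta^2e^{r\delta^2}\|_{\ell^1\to\ell^1}<\infty$. So citing it just restates the claim unless you prove it separately, for instance by sectorial resolvent estimates.

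The paper closes this step directly in a few lines, and you can use the same argument:
\begin{itemize}
\item By the semigroup property, $\delta_iG(\cdot,r)$ is the convolution of $D_i^-G(\cdot,r/2)$ and $D_i^+G(\cdot,r/2)$.
\item Since $q_k(r)=e^{-2r}I_{|k|}(2r)$ is symmetric and non-increasing in $|k|$, each first difference has $\ell^1$ norm $2q_0(r/2)$. This is bounded by $C(1+r)^{-1/2}$, from $q_0(r)=\frac{1}{2\pi}\int_{-\pi}^{\pi}e^{-2r(1-\cos\theta)}\,d\theta$.
\item Young's inequality then gives the bound $C\min\{1,r^{-1}\}$.
\end{itemize}
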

\begin{proof}
To start, define the second difference on the unit lattice by
\begin{equation}
(\delta_iG)(\beta,r)=G(\beta-e_i,r)-2G(\beta,r)+G(\beta+e_i,r).
\end{equation}
It follows from \eqref{eq:semigroup_weights} that
\begin{equation}
c_\beta^i(h)=\frac{h^{2\sigma-1}}{\Gamma(\sigma)}
\int_0^\infty(\delta_iG)(\beta,r)r^{\sigma-1}\,dr.
\end{equation}
We first check absolute summability. Let $q_k(r)=e^{-2r}I_{|k|}(2r)$ be the one-dimensional heat kernel. Since $q_k(r)$ is symmetric and non-increasing in $|k|$, its first difference has $\ell^1$ norm $2q_0(r)$. Furthermore,
\begin{equation}
q_0(r)=\frac1{2\pi}\int_{-\pi}^{\pi}e^{-2r(1-\cos\theta)}\,d\theta
\leq C(1+r)^{-1/2}.
\end{equation}
Using the product formula \eqref{eq:prod_form_g} for $G$, its semigroup property, and the convolution inequality in $\ell^1$, we therefore obtain
\begin{equation}
\|\delta_iG(\cdot,r)\|_{\ell^1}
\leq\|D_i^-G(\cdot,r/2)\|_{\ell^1}
\|D_i^+G(\cdot,r/2)\|_{\ell^1}
\leq C\min\{1,r^{-1}\},
\end{equation}
where $D_i^\pm$ denote first differences on the unit lattice. Consequently,
\begin{equation}
\sum_\beta|c_\beta^i(h)|
\leq Ch^{2\sigma-1}\int_0^\infty
\min\{1,r^{-1}\}r^{\sigma-1}\,dr<\infty.
\end{equation}
This also justifies interchanging the sum and the integral. Since $\sum_\beta(\delta_iG)(\beta,r)=0$, the cancellation property follows.

Next, the discrete heat equation gives
\begin{equation}
\sum_{i=1}^d(\delta_iG)(\beta,r)=\partial_rG(\beta,r).
\end{equation}
Hence, for $\beta\neq0$,
\begin{equation}
C_\beta(h)=\frac{h^{2\sigma-2}}{\Gamma(\sigma)}
\int_0^\infty\partial_rG(\beta,r)r^{\sigma-1}\,dr.
\end{equation}
Notice that $G(\beta,r)=O(r^{|\beta|_1})$ as $r\rightarrow0$, while $0\leq G(\beta,r)\leq1$. Therefore, the boundary terms vanish when integrating by parts, and we obtain
\begin{equation}
C_\beta(h)=\frac{(1-\sigma)h^{2\sigma-2}}{\Gamma(\sigma)}
\int_0^\infty G(\beta,r)r^{\sigma-2}\,dr.
\end{equation}
Using $s=1-\sigma$ and $|\Gamma(-s)|=\Gamma(1-s)/s$ gives \eqref{eq:semigroup_aggregate_kernel}. Positivity and symmetry now follow directly from the corresponding properties of $G$. Summability follows from the directional estimate proved above.

Finally, the semigroup formula for the fractional discrete Laplacian is
\begin{equation}
(-\Delta_h)^sU_j=\frac1{|\Gamma(-s)|}
\int_0^\infty\left(U_j-(e^{t\Delta_h}U)_j\right)\frac{dt}{t^{1+s}}.
\end{equation}
Substituting the heat kernel and changing variables $t=h^2r$ yields
\begin{equation}
(-\Delta_h)^sU_j=\sum_{\beta\neq0}C_\beta(h)(U_j-U_{j+\beta})=(L_hU)_j.
\end{equation}
The interchange is justified by $\sum_{\beta\neq0}C_\beta(h)<\infty$ for bounded $U$. This completes the proof.
\end{proof}

We now specialize to one space dimension, where a closed formula is available.

\begin{lemma}[One-dimensional semigroup weights]
\label{lem:semigroup_weights_1d}
Let $d=1$ and $\gamma\in\mathbb{Z}+\frac12$. For $\gamma>0$,
\begin{equation}\label{eq:semigroup_weights_1d}
	g_\gamma(h)=\frac{2^{1-2\sigma}\Gamma(\frac32-\sigma)}{\sqrt{\pi}\Gamma(\sigma)}h^{2\sigma-1}\frac{\Gamma(\gamma-\frac12+\sigma)}{\Gamma(\gamma+\frac32-\sigma)},
\end{equation}
and
\begin{equation}
	g_{-\gamma}(h)=-g_\gamma(h).
\end{equation}
Moreover, $g_\gamma(h)>0$ for $\gamma>0$, and the sequence $g_\gamma(h)$ is strictly decreasing as $\gamma$ increases over the positive half-integers.
\end{lemma}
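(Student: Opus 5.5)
Set $d=1$ in \eqref{eq:semigroup_weights}. With $\gamma=k+\tfrac12$, $k\geq0$, we get
\[
g_\gamma(h)=\frac{h^{2\sigma-1}}{\Gamma(\sigma)}\int_0^\infty\bigl(q_k(r)-q_{k+1}(r)\bigr)r^{\sigma-1}\,dr,
\qquad q_k(r)=e^{-2r}I_k(2r).
\]
The plan is to evaluate this integral in closed form from the Mellin transform of $e^{-2r}I_\nu(2r)$. That is, I would use the classical Weber-type formula
\[
\int_0^\infty e^{-2r}I_\nu(2r)r^{\mu-1}\,dr=\frac{\Gamma(\tfrac12-\mu)\,\Gamma(\nu+\mu)}{4^{\mu}\sqrt{\pi}\,\Gamma(\nu+1-\mu)},
\]
which holds for $-\nu<\mu<\tfrac12$. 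This is the same formula Ciaurri et al.\ use to obtain the kernel of $(-\Delta_h)^s$.

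The difficulty is that the formula with $\mu=\sigma$ fails for $\sigma\geq\tfrac12$. The individual integrals $\int q_k r^{\sigma-1}\,dr$ then diverge at infinity, since $q_k(r)\sim(4\pi r)^{-1/2}$. Only the difference converges, because $q_k-q_{k+1}=O(r^{-3/2})$. I would handle this by analytic continuation. For $0<\sigma<\tfrac12$ I apply the formula to each term and subtract. The difference integral is analytic in $\sigma$ on $0<\sigma<1$, since it converges absolutely there. The resulting Gamma expression is also analytic on this strip: the apparent pole of $\Gamma(\tfrac12-\sigma)$ at $\sigma=\tfrac12$ cancels against the zero of the bracket below. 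By the identity theorem the closed form then holds on all of $(0,1)$. This continuation step is the one I expect to be the main obstacle. An alternative that avoids it is to write $q_k-q_{k+1}$ via the recurrence $I_{k}-I_{k+2}=\tfrac{2(k+1)}{2r}I_{k+1}$, which gains a factor $r^{-1}$. Summing telescopically, or applying the formula with $\mu=\sigma-1$ directly, then lands in the convergent range.

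The algebra after subtraction goes as follows. The subtracted Gamma terms are
\[
\frac{\Gamma(\tfrac12-\sigma)}{4^\sigma\sqrt\pi}\left[\frac{\Gamma(k+\sigma)}{\Gamma(k+1-\sigma)}-\frac{\Gamma(k+1+\sigma)}{\Gamma(k+2-\sigma)}\right].
\]
I would put the bracket over the common denominator $\Gamma(k+2-\sigma)$. It becomes $\frac{\Gamma(k+\sigma)}{\Gamma(k+2-\sigma)}(1-2\sigma)$, since $(k+1-\sigma)-(k+\sigma)=1-2\sigma$. Next I use $(1-2\sigma)\Gamma(\tfrac12-\sigma)=2\Gamma(\tfrac32-\sigma)$. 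With $k=\gamma-\tfrac12$ this gives exactly $\frac{2^{1-2\sigma}\Gamma(\frac32-\sigma)}{\sqrt\pi\,\Gamma(\sigma)}h^{2\sigma-1}\frac{\Gamma(\gamma-\frac12+\sigma)}{\Gamma(\gamma+\frac32-\sigma)}$. Antisymmetry $g_{-\gamma}=-g_\gamma$ is already noted after \eqref{eq:semigroup_weights}. It follows from $G(-\beta,r)=G(\beta,r)$.

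Positivity is immediate, since every Gamma factor has a positive argument for $\gamma\geq\tfrac12$ and $0<\sigma<1$. Alternatively, $I_k\geq I_{k+1}$ gives it directly. For strict monotonicity, I compare consecutive half-integers. Using $\Gamma(z+1)=z\Gamma(z)$,
\[
\frac{g_{\gamma+1}}{g_\gamma}=\frac{\gamma-\tfrac12+\sigma}{\gamma+\tfrac32-\sigma}<1,
\]
because $2\sigma<2$. Hence the sequence is strictly decreasing over the positive half-integers.
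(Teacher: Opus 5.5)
Your proof is correct. The positivity and ratio-monotonicity parts are the same as the paper's, but for the closed form you take a more explicit route. The paper does no computation: it only says that \eqref{eq:semigroup_weights_1d} follows from the known kernel of the fractional powers of the discrete Laplacian. You instead evaluate the integral in \eqref{eq:semigroup_weights} directly from the Mellin transform of $e^{-2r}I_\nu(2r)$. The formula you quote is right (it checks against the elementary case $\nu=\tfrac12$). It applies at $\nu=k\geq0$, $\mu=\sigma\in(0,\tfrac12)$, and your algebra via $(1-2\sigma)\Gamma(\tfrac12-\sigma)=2\Gamma(\tfrac32-\sigma)$ gives exactly the stated constant. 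Your continuation to $\tfrac12\leq\sigma<1$ is also sound. Since $0\leq q_k-q_{k+1}$ is bounded near $0$ and $O(r^{-3/2})$ at infinity, the integral is holomorphic for $0<\operatorname{Re}\sigma<\tfrac32$. The Gamma expression is holomorphic there too once the pole of $\Gamma(\tfrac12-\sigma)$ is absorbed, so the identity theorem applies. Your route gives a self-contained derivation that shows explicitly how the range $\sigma\geq\tfrac12$, where the individual integrals diverge, is covered; the paper gains brevity but leaves this to the reader. If you want to avoid continuation altogether, you can follow the paper's hint literally. Write $g_{k+\frac12}(h)=\sum_{\beta\geq k+1}c_\beta(h)$; the tail $g_{N+\frac12}(h)$ tends to zero by antisymmetry, absolute summability and $\sum_\beta c_\beta(h)=0$ from Proposition~\ref{prop:semigroup_multidimensional}. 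Here $c_\beta(h)/h$ is the $(-\Delta_h)^{1-\sigma}$ kernel, given by the same Mellin formula with $\mu=\sigma-1$, which lies in the admissible range for every $\sigma\in(0,1)$ and $|\beta|\geq1$. The sum then telescopes because $2s\,\Gamma(\beta-s)/\Gamma(\beta+1+s)=\Gamma(\beta-s)/\Gamma(\beta+s)-\Gamma(\beta+1-s)/\Gamma(\beta+1+s)$ with $s=1-\sigma$. Your recurrence-based alternative is less direct than you suggest. The relation $I_k-I_{k+2}=\tfrac{k+1}{r}I_{k+1}$ links indices two apart, so recovering $q_k-q_{k+1}$ would need an alternating series and an interchange argument; you do not need it.
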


\begin{proof}
Formula \eqref{eq:semigroup_weights_1d} follows from the explicit kernel representation of the fractional powers of the one-dimensional discrete Laplacian. Positivity is immediate from the Gamma-function formula. Furthermore,
\begin{equation}
	\frac{g_{\gamma+1}(h)}{g_\gamma(h)}=\frac{\gamma-\frac12+\sigma}{\gamma+\frac32-\sigma}<1
\end{equation}
for $0<\sigma<1$, which proves the monotonicity.
\end{proof}

We next identify the operator induced by the differences of the pressure-gradient weights. Define
\begin{equation}
c_\beta(h)=g_{\beta-\frac12}(h)-g_{\beta+\frac12}(h), \quad \beta\in\mathbb{Z}.
\end{equation}
Then $c_{-\beta}(h)=c_\beta(h)$ and
\begin{equation}
c_\beta(h)>0, \quad \beta\neq0.
\end{equation}

\begin{proposition}[Identification with the fractional discrete Laplacian]
\label{prop:semigroup_fractional_laplacian}
Let
\begin{equation}
	s=1-\sigma.
\end{equation}
Then, for $\beta\neq0$,
\begin{equation}
	\frac{c_\beta(h)}{h}=\frac{4^s\Gamma(\frac12+s)}{\sqrt{\pi}|\Gamma(-s)|}\frac{1}{h^{2s}}\frac{\Gamma(|\beta|-s)}{\Gamma(|\beta|+1+s)}.
\end{equation}
Consequently,
\begin{equation}
	(\mathcal{A}_hV)_j=\sum_{\beta\neq0}\frac{c_\beta(h)}{h}(V_j-V_{j+\beta})=(-\Delta_h)^sV_j.
\end{equation}
\end{proposition}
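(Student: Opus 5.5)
The plan is a direct computation from the closed formula of Lemma~\ref{lem:semigroup_weights_1d}. By the symmetry $c_{-\beta}(h)=c_\beta(h)$, which follows from the antisymmetry of $g_\gamma$, it suffices to treat $\beta\geq1$. Then both $\beta-\frac12$ and $\beta+\frac12$ are positive half-integers, so \eqref{eq:semigroup_weights_1d} applies to each. With $\gamma=\beta\mp\frac12$ we get
\begin{equation}
\frac{\Gamma(\gamma-\frac12+\sigma)}{\Gamma(\gamma+\frac32-\sigma)}=\frac{\Gamma(\beta-1+\sigma)}{\Gamma(\beta+1-\sigma)}\quad\text{and}\quad\frac{\Gamma(\beta+\sigma)}{\Gamma(\beta+2-\sigma)},
\end{equation}
respectively. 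Substituting $\sigma=1-s$, these read $\Gamma(\beta-s)/\Gamma(\beta+s)$ and $\Gamma(\beta+1-s)/\Gamma(\beta+1+s)$.

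Next I would put both ratios over the common denominator $\Gamma(\beta+1+s)$, using $\Gamma(\beta+1+s)=(\beta+s)\Gamma(\beta+s)$ and $\Gamma(\beta+1-s)=(\beta-s)\Gamma(\beta-s)$. The difference becomes
\begin{equation}
\frac{\Gamma(\beta-s)}{\Gamma(\beta+1+s)}\bigl[(\beta+s)-(\beta-s)\bigr]=\frac{2s\,\Gamma(\beta-s)}{\Gamma(\beta+1+s)}.
\end{equation}
The main care, and the only real obstacle, is matching the prefactor. We have
\begin{equation}
\frac{2^{1-2\sigma}\Gamma(\frac32-\sigma)}{\sqrt\pi\,\Gamma(\sigma)}h^{2\sigma-1}\cdot 2s=\frac{2^{2s-1}\Gamma(\frac12+s)}{\sqrt\pi\,\Gamma(1-s)}h^{1-2s}\cdot 2s=\frac{4^s\Gamma(\frac12+s)\,s}{\sqrt\pi\,\Gamma(1-s)}h^{1-2s}.
\end{equation}
Using $|\Gamma(-s)|=\Gamma(1-s)/s$ for $0<s<1$, dividing by $h$ gives exactly the stated formula for $c_\beta(h)/h$. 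This formula also shows positivity, consistent with Lemma~\ref{lem:semigroup_weights_1d}.

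For the identification, I would recall the known explicit kernel of $(-\Delta_h)^s$ in one dimension from \cite{ciaurri2015fractional}, namely $(-\Delta_h)^sV_j=\sum_{\beta\neq0}K_s(\beta)h^{-2s}(V_j-V_{j+\beta})$ with $K_s$ exactly the Gamma-ratio above. Alternatively, I would specialize Proposition~\ref{prop:semigroup_multidimensional} to $d=1$: there $C_\beta(h)=c_\beta(h)/h$ and $L_h=(-\Delta_h)^s$ were already established, and $\mathcal A_h$ coincides with $L_h$ by the definition \eqref{eq:discrete_fractional_integrated}. The sum converges absolutely for bounded $V$ because $\Gamma(\beta-s)/\Gamma(\beta+1+s)\sim\beta^{-1-2s}$. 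Either route yields $(\mathcal A_hV)_j=(-\Delta_h)^sV_j$.
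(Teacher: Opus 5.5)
Your proposal is correct and follows the paper's proof step for step. You reduce to $\beta\geq1$, insert the closed-form weights at $\beta\mp\frac12$, and use $\Gamma(z+1)=z\Gamma(z)$ to extract $2s\,\Gamma(\beta-s)/\Gamma(\beta+1+s)$; you then rewrite the prefactor with $|\Gamma(-s)|=\Gamma(1-s)/s$ and recognize the kernel of $(-\Delta_h)^s$ from \cite{ciaurri2015fractional}. Your alternative route via the $d=1$ case of Proposition~\ref{prop:semigroup_multidimensional} is also valid, and it gives $\mathcal{A}_h=(-\Delta_h)^s$ directly from the semigroup definition, without needing the closed-form kernel.
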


\begin{proof}
It is sufficient to consider $\beta\geq1$. Using \eqref{eq:semigroup_weights_1d} and $s=1-\sigma$, we have
\begin{equation}
	g_{\beta-\frac12}(h)=C_{\sigma}h^{2\sigma-1}\frac{\Gamma(\beta-s)}{\Gamma(\beta+s)}
\end{equation}
and
\begin{equation}
	g_{\beta+\frac12}(h)=C_{\sigma}h^{2\sigma-1}\frac{\Gamma(\beta+1-s)}{\Gamma(\beta+1+s)},
\end{equation}
where
\begin{equation}
	C_{\sigma}=\frac{2^{1-2\sigma}\Gamma(\frac32-\sigma)}{\sqrt{\pi}\Gamma(\sigma)}.
\end{equation}
Using $\Gamma(z+1)=z\Gamma(z)$, we obtain
\begin{equation}
	c_\beta(h)=2sC_{\sigma}h^{2\sigma-1}\frac{\Gamma(\beta-s)}{\Gamma(\beta+1+s)}.
\end{equation}
Since
\begin{equation}
	|\Gamma(-s)|=\frac{\Gamma(1-s)}{s},
\end{equation}
this gives
\begin{equation}
	\frac{c_\beta(h)}{h}=\frac{4^s\Gamma(\frac12+s)}{\sqrt{\pi}|\Gamma(-s)|}\frac{1}{h^{2s}}\frac{\Gamma(\beta-s)}{\Gamma(\beta+1+s)}.
\end{equation}
By symmetry, the same formula holds with $\beta$ replaced by $|\beta|$. The resulting kernel is precisely the kernel of the fractional discrete Laplacian $(-\Delta_h)^s$, which proves the claim.
\end{proof}

\begin{corollary}[Consistency in one dimension]
\label{cor:semigroup_consistency}
Let $s=1-\sigma$ and let $r_h^{1/2}v$ denote the restriction of $v$ to the shifted grid,
\begin{equation}
	(r_h^{1/2}v)_j=v(x_{j+\frac12}).
\end{equation}
For sufficiently smooth bounded functions $v$,
\begin{equation}
	\|\mathcal{A}_hr_h^{1/2}v-r_h^{1/2}(-\Delta)^sv\|_\infty\leq C_v\eta_{\mathcal A}(h),
\end{equation}
where
\begin{equation}
	\eta_{\mathcal A}(h)=
	\begin{cases}
		h^{2\sigma}, & 0<\sigma<\frac12,\\
		h^\theta, \quad 0<\theta<1, & \sigma=\frac12,\\
		h^{2\sigma-1}, & \frac12<\sigma<1.
	\end{cases}
\end{equation}
For example, it is sufficient to assume $v\in C^{1,1}(\mathbb{R})$ when $0<\sigma<1/2$, $v\in C^{1,\theta}(\mathbb{R})$ when $\sigma=1/2$, and $v\in C^{0,1}(\mathbb{R})$ when $1/2<\sigma<1$.
\end{corollary}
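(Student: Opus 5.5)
The plan is to reduce the statement to the known consistency estimate for the fractional discrete Laplacian on $h\mathbb{Z}$ and then transfer it to the shifted grid. By Proposition~\ref{prop:semigroup_fractional_laplacian}, $\mathcal{A}_h$ coincides with $(-\Delta_h)^s$, whose kernel is
\begin{equation}
K_s(\beta)h^{-2s},\qquad K_s(\beta)=\frac{4^s\Gamma(\frac12+s)\Gamma(|\beta|-s)}{\sqrt{\pi}|\Gamma(-s)|\Gamma(|\beta|+1+s)}.
\end{equation}
This is exactly the operator analysed by Ciaurri et al.~\cite{ciaurri2015fractional}. Their Theorem~1.1 gives
\begin{equation}
\|(-\Delta_h)^s r_hv-r_h(-\Delta)^sv\|_\infty\leq C h^{\gamma}
\end{equation}
for the restriction $r_h$ to $h\mathbb{Z}$, with the exponent determined by the H\"older regularity of $v$.

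\textbf{Step 1: exponents.} I will read off the exponents from that theorem. In their notation:
\begin{itemize}
\item for $v\in C^{1,\delta}$ with $0<2s-1<\delta$ (that is $\sigma<1/2$), the rate is $h^{1+\delta-2s}$, and $\delta=1$ gives $h^{2-2s}=h^{2\sigma}$;
\item for $v\in C^{0,\delta}$ with $2s<\delta$ on the other side ($\sigma>1/2$, $s<1/2$, with Lipschitz $v$ so $\delta=1$), the rate is $h^{\delta-2s}=h^{1-2s}=h^{2\sigma-1}$;
\item in the borderline case $s=1/2$, $v\in C^{1,\theta}$ yields $h^\theta$.
\end{itemize}
The substitution $s=1-\sigma$ is the only bookkeeping required, and it is also where I must check that each H\"older hypothesis in the statement matches the one actually required there.

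\textbf{Step 2: transfer to the shifted grid.} Define $\tilde v(x)=v(x+h/2)$. Then
\begin{equation}
(r_h^{1/2}v)_j=(r_h\tilde v)_j,\qquad (-\Delta)^s\tilde v=\bigl((-\Delta)^sv\bigr)(\cdot+h/2),
\end{equation}
by translation invariance of the Fourier multiplier. Since $\mathcal{A}_h$ acts only on grid indices, we get
\begin{equation}
\mathcal{A}_hr_h^{1/2}v-r_h^{1/2}(-\Delta)^sv=(-\Delta_h)^sr_h\tilde v-r_h(-\Delta)^s\tilde v.
\end{equation}
The constant in Ciaurri's estimate depends only on the H\"older norm of the function, and translation leaves that norm unchanged. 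Hence the bound holds uniformly in $h$ even though $\tilde v$ depends on $h$.

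\textbf{Main obstacle.} The delicate point is the uniformity of the constant for $\tilde v$, which depends on $h$. Ciaurri's constant must depend only on the norms $\|v\|_{C^{k,\delta}}$ and not on, say, support. If their result is stated for a fixed function, I would check this directly by splitting the error into near-field and far-field parts. In the near field, $|\beta|\leq 1/h$, Taylor expansion is compared against the kernel asymptotics $K_s(\beta)\sim c|\beta|^{-1-2s}$ with an $O(|\beta|^{-2-2s})$ correction. In the far field, boundedness of $v$ is used. Every estimate in this splitting involves only sup-norms of $v$ and its H\"older seminorms, which confirms the claimed rates.
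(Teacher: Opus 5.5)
Your proposal is correct and matches the paper's proof: you identify $\mathcal{A}_h$ with $(-\Delta_h)^s$ via Proposition~\ref{prop:semigroup_fractional_laplacian}, invoke Theorem~1.1 of \cite{ciaurri2015fractional} with $s=1-\sigma$, and pass to the shifted grid by translation invariance. Your remark that the constant depends only on translation-invariant H\"older norms, so the estimate may be applied to the $h$-dependent translate $\tilde v$, makes explicit a point the paper states in one line.
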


\begin{proof}
By Proposition~\ref{prop:semigroup_fractional_laplacian},
\begin{equation}
	\mathcal{A}_h=(-\Delta_h)^s.
\end{equation}
The result therefore follows from the consistency estimates for the fractional discrete Laplacian proved in Theorem~1.1 of \cite{ciaurri2015fractional}. Since both continuous and discrete fractional Laplacians are translation invariant, the same estimates hold on the shifted grid $h(\mathbb{Z}+\frac12)$.
\end{proof}

\begin{remark}
The above consistency result is the one used in the one-dimensional convergence analysis. Therefore, a separate second-order consistency estimate for the pressure-gradient operator $G_h$ is not required.
\end{remark}

\bibliographystyle{plain}
\bibliography{bibliography}

@article{ToppYangari2017,
  author  = {Topp, Erwin and Yangari, Miguel},
  title   = {Existence and uniqueness for parabolic problems with Caputo time derivative},
  journal = {Journal of Differential Equations},
  year    = {2017},
  volume  = {262},
  number  = {12},
  pages   = {6018--6046},
  doi     = {10.1016/j.jde.2017.02.024}
}

@article{Allenetal2017,
  author  = {Allen, Mark and Caffarelli, Luis and Vasseur, Alexis},
  title   = {Porous medium flow with both a fractional potential pressure and fractional time derivative},
  journal = {Chinese Annals of Mathematics, Series B},
  year    = {2017},
  volume  = {38},
  number  = {1},
  pages   = {45--82},
  doi     = {10.1007/s11401-016-1063-4}
}

@article{chasseigne2017nonlocal,
  title={On nonlocal quasilinear equations and their local limits},
  author={Chasseigne, Emmanuel and Jakobsen, Espen R.},
  journal={Journal of Differential Equations},
  volume={262},
  number={6},
  pages={3759--3804},
  year={2017},
  publisher={Elsevier},
  doi={10.1016/j.jde.2016.11.033}
}

@article{ciaurri2018nonlocal,
  title={Nonlocal discrete diffusion equations and the fractional discrete Laplacian, regularity and applications},
  author={Ciaurri, {\'O}scar and Roncal, Luz and Stinga, Pablo Ra{\'u}l and Torrea, Jos{\'e} L. and Varona, Juan Luis},
  journal={Advances in Mathematics},
  volume={330},
  pages={688--738},
  year={2018},
  publisher={Elsevier}
}

@article{delTeso2018,
  author  = {del Teso, F{\'e}lix and Endal, J{\o}rgen and Jakobsen, Espen R.},
  title   = {Robust Numerical Methods for Nonlocal (and Local) Equations of Porous Medium Type. Part II: Schemes and Experiments},
  journal = {SIAM Journal on Numerical Analysis},
  year    = {2018},
  volume  = {56},
  pages   = {3611--3647},
  doi     = {10.1137/18m1180748}
}

@book{Stroock1984LargeDeviations,
  author    = {Stroock, Daniel W.},
  title     = {An Introduction to the Theory of Large Deviations},
  series    = {Universitext},
  publisher = {Springer-Verlag},
  address   = {New York},
  year      = {1984},
  doi       = {10.1007/978-1-4613-8514-1},
  mrnumber  = {0755154}
}

@article{Varopoulos1985HardyLittlewood,
  author  = {Varopoulos, Nicholas Th.},
  title   = {Hardy--Littlewood Theory for Semigroups},
  journal = {Journal of Functional Analysis},
  volume  = {63},
  number  = {2},
  pages   = {240--260},
  year    = {1985},
  doi     = {10.1016/0022-1236(85)90087-4},
  mrnumber = {0803094}
}

@article{delTeso2026Numerical,
  author  = {del Teso, F{\'e}lix and P{\l}ociniczak, {\L}ukasz},
  title   = {Numerical methods and regularity properties for viscosity solutions of nonlocal in space and time diffusion equations},
  journal = {IMA Journal of Numerical Analysis},
  volume  = {46},
  number  = {2},
  pages   = {795--837},
  year    = {2026},
  doi     = {10.1093/imanum/draf011}
}

@article{Stan2016FiniteInfinite,
  author  = {Stan, Diana and del Teso, F{\'e}lix and V{\'a}zquez, Juan Luis},
  title   = {Finite and infinite speed of propagation for porous medium equations with nonlocal pressure},
  journal = {Journal of Differential Equations},
  volume  = {260},
  number  = {2},
  pages   = {1154--1199},
  year    = {2016},
  doi     = {10.1016/j.jde.2015.09.023}
}

@article{delTesoJakobsen2026Convergent,
  author  = {del Teso, F{\'e}lix and Jakobsen, Espen R.},
  title   = {A Convergent Finite Difference-Quadrature Scheme for the Porous Medium Equation with Nonlocal Pressure},
  journal = {Foundations of Computational Mathematics},
  year    = {2026},
  doi     = {10.1007/s10208-026-09752-y}
}

@article{CaffarelliVazquez2011,
  author  = {Caffarelli, Luis and V{\'a}zquez, Juan Luis},
  title   = {Nonlinear Porous Medium Flow with Fractional Potential Pressure},
  journal = {Archive for Rational Mechanics and Analysis},
  volume  = {202},
  number  = {2},
  pages   = {537--565},
  year    = {2011},
  doi     = {10.1007/s00205-011-0420-4}
}

@misc{ciaurri2015fractional,
  author        = {Ciaurri, {\'O}scar and Roncal, Luz and Stinga, Pablo Ra{\'u}l and Torrea, Jos{\'e} L. and Varona, Juan Luis},
  title         = {Fractional discrete {Laplacian} versus discretized fractional {Laplacian}},
  year          = {2015},
  eprint        = {1507.04986},
  archivePrefix = {arXiv},
  primaryClass  = {math.AP},
  doi           = {10.48550/arXiv.1507.04986},
  note          = {arXiv:1507.04986; Theorem 1.1}
}

@article{CarrilloFronzoniSuli2025,
  author  = {Carrillo, Jos{\'e} A. and Fronzoni, Stefano and S{\"u}li, Endre},
  title   = {Finite element scheme for the fractional porous medium equation with fractional pressure},
  journal = {Numerische Mathematik},
  volume  = {157},
  number  = {5},
  pages   = {1537--1614},
  year    = {2025},
  doi     = {10.1007/s00211-025-01486-3}
}

@article{LinXu2007,
  author  = {Lin, Yumin and Xu, Chuanju},
  title   = {Finite difference/spectral approximations for the time-fractional diffusion equation},
  journal = {Journal of Computational Physics},
  volume  = {225},
  number  = {2},
  pages   = {1533--1552},
  year    = {2007},
  doi     = {10.1016/j.jcp.2007.02.001}
}

@article{Kopteva2019L1,
  author  = {Kopteva, Natalia},
  title   = {Error analysis of the {L1} method on graded and uniform meshes for a fractional-derivative problem in two and three dimensions},
  journal = {Mathematics of Computation},
  volume  = {88},
  number  = {319},
  pages   = {2135--2155},
  year    = {2019},
  doi     = {10.1090/mcom/3410}
}

@article{choi2021classical,
  title   = {Classical solutions for fractional porous medium flow},
  author  = {Choi, Young-Pil and Jeong, In-Jee},
  journal = {Nonlinear Analysis: Theory, Methods \& Applications},
  volume  = {210},
  pages   = {112393},
  year    = {2021},
  doi     = {10.1016/j.na.2021.112393}
}

@article{Biler2010,
  author  = {Biler, Piotr and Karch, Grzegorz and Monneau, R{\'e}gis},
  title   = {Nonlinear diffusion of dislocation density and self-similar solutions},
  journal = {Communications in Mathematical Physics},
  year    = {2010},
  volume  = {294},
  number  = {1},
  pages   = {145--168},
  doi     = {10.1007/s00220-009-0855-8}
}

@article{Horvath1959,
  author  = {Horv{\'a}th, J.},
  title   = {On some composition formulas},
  journal = {Proceedings of the American Mathematical Society},
  year    = {1959},
  volume  = {10},
  number  = {3},
  pages   = {433--437},
  doi     = {10.1090/s0002-9939-1959-0107788-4}
}

@article{Plociniczak2019,
  author  = {P{\l}ociniczak, {\L}ukasz},
  title   = {Derivation of the nonlocal pressure form of the fractional porous medium equation in the hydrological setting},
  journal = {Communications in Nonlinear Science and Numerical Simulation},
  year    = {2019},
  volume  = {76},
  pages   = {66--70},
  doi     = {10.1016/j.cnsns.2019.04.014}
}

@article{JinLazarovZhou2016L1,
  author  = {Jin, Bangti and Lazarov, Raytcho and Zhou, Zhi},
  title   = {An analysis of the {L1} scheme for the subdiffusion equation with nonsmooth data},
  journal = {IMA Journal of Numerical Analysis},
  year    = {2016},
  volume  = {36},
  number  = {1},
  pages   = {197--221},
  doi     = {10.1093/imanum/dru063}
}

@article{StynesORiordanGracia2017,
  author  = {Stynes, Martin and O'Riordan, Eugene and Gracia, Jos{\'e} Luis},
  title   = {Error Analysis of a Finite Difference Method on Graded Meshes for a Time-Fractional Diffusion Equation},
  journal = {SIAM Journal on Numerical Analysis},
  year    = {2017},
  volume  = {55},
  number  = {2},
  pages   = {1057--1079},
  doi     = {10.1137/16M1082329}
}

\end{document}